  \theoremstyle{plain}
  \newcounter{thI}
  \newtheorem{thmInt}[thI]{Theorem}
    \newtheorem*{theorem-non}{Theorem}
  \newtheorem{lem}{Lemma}[section]
  \newtheorem{prop}{Proposition}[section]
  \newcounter{cntasmp}
  \newtheorem{assumption}[cntasmp]{Assumption}
  \theoremstyle{definition}
  \newtheorem{defi}{Definition}
  \theoremstyle{remark}
  \newtheorem{rmq}{Remark}[section]
  \newtheorem{rmqInt}{Remark}[]
  \def\ptn(#1)(#2)(#3){\fill[color=#3](#1)circle(#2)}
\begin{document}

  \renewcommand{\proofname}{Proof}
  \renewcommand\thethI{\arabic{thI}} 
   \author{Laurent Dietrich%
  \thanks{Electronic address: \texttt{laurent.dietrich@math.univ-toulouse.fr}}}
\affil{Institut de Mathématiques de Toulouse ; UMR5219 \\ Université de Toulouse ; CNRS \\ UPS IMT, F-31062 Toulouse Cedex 9, France}
    \title{Existence of travelling waves for a reaction-diffusion system with a line of fast diffusion}
  
  \maketitle

\begin{abstract}
We prove existence and uniqueness of travelling waves for a reaction-diffusion system coupling a classical reaction-diffusion equation in a strip with a diffusion equation on a line. To do this we use a sequence of continuations which leads to further insight into the system. In particular, the transition occurs through a singular perturbation which seems new in this context, connecting the system with a Wentzell type boundary value problem.
\end{abstract}

%\tableofcontents
\newpage

\section{Introduction}
This paper deals with the following system with unknowns $c > 0, \phi(x), \psi(x,y)$ : 
\begin{alignat*}{1}
&\begin{cases}
-d\Delta \psi+ c\partial_x \psi = f(\psi)\text{ for }(x,y)\in \Omega_L := \mathbb R\times]-L,0[ \\
d\partial_y \psi(x,0) = \mu \phi(x) - \psi(x,0)\text{ for }x\in \mathbb R \\
-d\partial_y \psi(x,-L) = 0 \text{ for }x\in \mathbb R\\
\end{cases}
\\ 
& -D\phi''(x) + c\phi'(x) = \psi(x,0) - \mu \phi(x)\text{ for }x\in \mathbb R
\end{alignat*}
along with the uniform in $y$ limiting conditions
$$\mu \phi, \psi \to 0 \text{ as }x\to -\infty$$
$$\mu \phi, \psi \to 1 \text{ as } x\to +\infty$$
Those equations will be represented from now on as the following diagram 
\begin{equation}
\label{normal}
  \begin{tikzpicture}
  \draw (-6,0) -- (6,0) node[pos=0.5,below] {\small{$d\partial_y \psi = \mu \phi - \psi$}} node[pos=0.5,above] {$-D\phi'' + c\phi' = \psi - \mu \phi$};

  \node at (4.3,0.33) {$\phi\to 1/\mu$};
  \node at (-4.3,0.33) {$0 \leftarrow \phi$};

  \node at (0,-1.5) {$- d\Delta \psi + c\partial_x \psi = f(\psi)$};

  \draw (-6,-3) -- (6,-3) node[pos=0.5,above] {\small{$\partial_y \psi= 0$}};

 % \draw[<->] (-4.3,-3) -- (-4.3,0) node[pos=0.5,left] {$L$};

  \node at (4.3,-1.5) {$\psi \to 1$};
  \node at (-4.3,-1.5) {$0 \leftarrow \psi$};

  \end{tikzpicture}
\end{equation}
If $(c,\phi,\psi)$ is a solution of \eqref{normal}, then $(\phi(x-ct),\psi(x-ct,y))$ is a travelling wave solution connecting the states $(0,0)$ and $(1/\mu,1)$ for the following reaction-diffusion system 
\begin{equation}
\label{readi}
  \begin{tikzpicture}
  \draw (-6,0) -- (6,0) node[pos=0.5,below] {\small{$d\partial_y v = \mu u - v$}} node[pos=0.5,above] {$\partial_t u - Du''  = v - \mu u$};

  \node at (0,-1.5) {$\partial_t v - d\Delta v  = f(v)$};

  \draw (-6,-3) -- (6,-3) node[pos=0.5,above] {\small{$\partial_y v= 0$}};

 % \draw[<->] (-4.3,-3) -- (-4.3,0) node[pos=0.5,left] {$L$};

  \end{tikzpicture}
\end{equation}

This system in the whole half-plane $y<0$ was introduced by Berestycki, Roquejoffre and Rossi in \cite{BRR} to model the influence of a line of fast-diffusion on propagation. Under the assumption $f(v)=v(1-v)$ (which will be referred to from now on as a KPP type non-linearity) the authors showed the following : when $D \leq 2d$, propagation of the initial datum along the $x$ direction occurs at the classical KPP velocity $c_{KPP} = 2\sqrt{df'(0)}$, but when $D > 2d$ it occurs at some velocity $c^*(\mu,d,D) > c_{KPP}$ which satisfies $$\lim_{D\to+\infty} \frac{c^*}{\sqrt{D}} = c > 0$$
%\begin{theorem-non}(\cite{BRR})
%\begin{enumerate}[i)]
%\item Spreading. There is an asymptotic speed of spreading $c_* = c_*(\mu,d,D) > 0$ such that the following is true. Let the initial datum $(u_0,v_0)$ be $\geq 0$ and $\not\equiv (0,0)$. Then :
%\begin{itemize}
%\item for all $c> c_*$, $\lim_{t\to+\infty} \sup_{|x|\geq ct} (u(x,t), v(x,y,t)) = (0,0)$.
%\item for all $c < c_*$, $\lim_{t\to+\infty} \inf_{|x|\geq ct} (u(x,t), v(x,y,t)) = (1/\mu,1)$.
%\end{itemize}
%\item  The spreading velocity. If $d$ and $\mu$ are fixed the following holds true.
%\begin{itemize}
%\item If $D \leq 2d$, then $c_*(\mu,d,D) = c_{KPP} = 2\sqrt{df'(0)}$
%\item If $D > 2d$ then $c_*(\mu,d,D) > c_{KPP}$ and $\lim_{D\to+\infty} c_*(\mu,d,D)/\sqrt{D}$ exists and is a positive real number.
%\end{itemize}
%\end{enumerate}
%\end{theorem-non}

In the present work, we take another viewpoint to extend these results to more general non-linearities. Indeed, the KPP assumption $$f(v) \leq f'(0)v$$ enables a reduction of the question to algebraic computations. For more general reaction terms (e.g. bistable, or ignition type), propagation is usually governed by the travelling waves. As a consequence, it is necessary to investigate the existence, uniqueness, and stability of solutions of \eqref{normal} in order to generalise this result, which will be seen through the velocity $c(D)$ of the solution.

We make the following assumption 
\begin{assumption}
\label{asf}
 $f:[0,1] \to \mathbb R $ is a smooth non-negative function, $f = 0$ on $[0,\theta]\cup\{1\}$ with $\theta > 0$, $f(0) = f(1) = 0$, and $f'(1) < 0$. For convenience we will still call $f$ an extension of $f$ on $\mathbb R$ by zero at the left of $0$ and by its tangent at $1$ (so it is negative) at the right of $1$. 
\end{assumption}
\begin{figure}[h!]
 \centering \def\svgwidth{200pt} 
 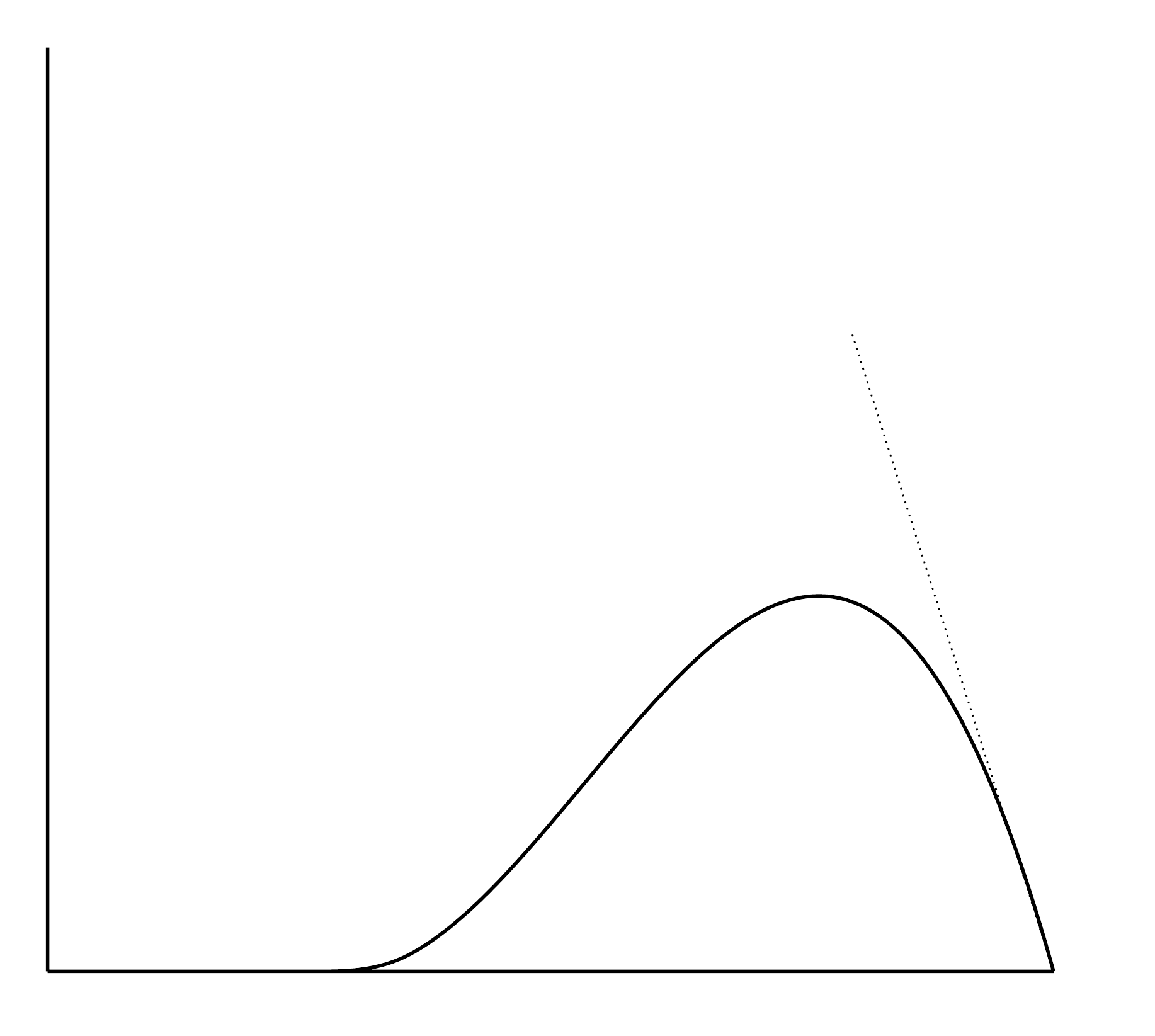 \caption{Example $f = \mathbf 1_{u>\theta}(u-\theta)^2(1-u)$} 
\end{figure}
Our objective is to study \eqref{normal} by a continuation method : we will show that \eqref{normal} can be reduced, through "physical" steps, to the classical one dimensional equation 
$$ - \psi'' + c\psi' = f(\psi) $$
$$ \psi(-\infty) = 0, \psi(+\infty) = 1$$
This is the simplest model in the description of propagation of premixed flames (see the works of Kanel \cite{KA}).
More precisely, the steps we will follow are : 
\begin{enumerate}[1)]
\item First, a good way of reaching a unique equation is to have $\mu\phi=\psi$ on the boundary $y=0$. To achieve that, we divide the exchange term by a small $\varepsilon > 0$ and send $\varepsilon \to 0$.  Setting $\mu\phi = \psi(x,0) + \varepsilon\phi_1$ we get after a simple computation the limiting model for $\psi$ : $$-\frac{D}{\mu}\partial_{xx}\psi + \frac{c}{\mu}\partial_x\psi = -\phi_1$$ and so this limit is a singular perturbation that sends $(S)_\varepsilon$ :
\begin{equation}
\tag{$S_\varepsilon$}
\label{Seps}
\begin{tikzpicture}
\draw (-6,0) -- (6,0) node[pos=0.5,below] {\small{$d\partial_y\psi = (\mu\phi - \psi(x,0))/\varepsilon$}} node[pos=0.5,above] {$-D\phi'' + c\phi' = (\psi(x,0) - \mu\phi)/\varepsilon$} ;

\node at (4.3,0.5) {$\mu\phi \to 1$};
\node at (-4.3,0.5) {$0 \leftarrow \phi$};

\node at (0,-1.5) {$-d\Delta\psi + c\partial_x\psi = f(\psi)$};

\draw (-6,-3) -- (6,-3) node[pos=0.5,above] {-\small{$\partial_y\psi = 0$}};

\node at (4.3,-1.5) {$\psi \to 1$};
\node at (-4.3,-1.5) {$0 \leftarrow \psi$};

\end{tikzpicture}
\end{equation}
to a unique equation with a Wentzell boundary condition that we call $(W)_s$ (with $s=1$):
\begin{equation}
\label{frontswentzell}
\tag{$W_s$}
\begin{tikzpicture}
\draw (-6,0) -- (6,0) node[pos=0.5,below] {\small{$d\partial_y\psi = s\left(\frac{D}{\mu}\partial_{xx}\psi- \frac{c}{\mu}\partial_x\psi\right)$}};
\node at (0,-1.5) {$-d\Delta\psi + c\partial_x\psi = f(\psi)$};

\draw (-6,-3) -- (6,-3) node[pos=0.5,above] {-\small{$\partial_y\psi = 0$}};

\node at (4.3,-1.5) {$\psi \to 1$};
\node at (-4.3,-1.5) {$0 \leftarrow \psi$};

\end{tikzpicture}
\end{equation}

\item By sending the parameter $s\in[0,1]$ to $0$, we can pass from $(W)_s$ to $(W)_0$, which is a Neumann problem, which is known to have a unique velocity $c_0 > 0$ and a unique smooth profile $\psi_0(x)$ (up to translations) as solutions. Existence is due to Kanel \cite{KA} and uniqueness to Berestycki-Nirenberg \cite{BN90}.
\end{enumerate}

We will show the following :

\begin{thmInt}(Existence for the Wentzell model)

\label{w}
 There exists $c_w >0$ and $\psi_w \in \mathcal C^{3,\alpha}(\Omega_L)$ for some $0<\alpha <1$, solution of $(W)_1$ obtained by continuation from $(c_0,\psi_0)$ that satisfies $0 < \psi_w < 1$, and $\psi_w$ is increasing in the $x$ direction. Moreover, if $(\underline c,\overline \psi)$ is a classical solution of $(W)_1$, we have $\underline c = c$ and there exists $r\in\mathbb R$ such that $\overline\psi(\cdot + r) = \psi_w(\cdot)$. 
\end{thmInt}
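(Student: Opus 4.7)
The proof proceeds by a continuation argument in the parameter $s \in [0,1]$. Starting from the known $y$-independent Kanel solution $(c_0,\psi_0)$ of $(W)_0$ (existence by \cite{KA}, uniqueness by \cite{BN90}), I define $\mathcal S \subset [0,1]$ as the set of parameters for which $(W)_s$ admits a classical solution $(c_s,\psi_s)$ lying on the continuous branch issued from $(c_0,\psi_0)$ and satisfying $c_s > 0$, $0 < \psi_s < 1$, and $\psi_s$ monotone increasing in $x$. Translation invariance in the $x$-variable is removed by a normalization such as $\psi_s(0,0) = \theta$. The goal is to prove that $\mathcal S$ is nonempty, open and closed in $[0,1]$, so that $\mathcal S = [0,1]$; the solution at $s=1$ is then the desired $(c_w,\psi_w)$.

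\textbf{Openness.} For each $s \in \mathcal S$ I view the system, together with the normalization, as a smooth mapping $F_s \colon \mathbb R \times X \to Y$ between weighted Hölder spaces adapted to the Wentzell setting, the weights encoding the correct exponential behaviour at $\pm\infty$. The linearized operator at $(c_s,\psi_s)$ is Fredholm; by translation invariance its kernel in the PDE-plus-boundary part is spanned by $(0,\partial_x \psi_s)$. The normalization kills this direction, and differentiation in $c$ provides the transverse compensation, yielding an isomorphism. The implicit function theorem then produces a smooth local branch of solutions near $s$.

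\textbf{Closedness, the main obstacle.} Given $s_n \to s_\infty \in [0,1]$ with $(c_{s_n},\psi_{s_n}) \in \mathcal S$, I need uniform bounds on $c_{s_n}$ away from $0$ and $\infty$ and compactness of the profiles. The upper bound on the velocity should follow from an explicit supersolution that exploits the exchange structure between the line and the strip. The lower bound $c_{s_n} \geq c_- > 0$ is more delicate and relies on the ignition hypothesis $f \equiv 0$ on $[0,\theta]$, via an exponential decay analysis at $-\infty$ together with a sliding-type argument excluding trivial limits. Once the velocities are controlled, Schauder estimates in the interior and up to the Wentzell boundary (where the boundary symbol is of mixed order and requires extra care) give local $\mathcal C^{3,\alpha}$ compactness; the normalization $\psi_{s_n}(0,0) = \theta$ combined with monotonicity in $x$ prevents collapse to a constant and forces the correct limits at $\pm\infty$, so the limit belongs to $\mathcal S$. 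I expect the velocity bounds, especially the lower one, to be the technical heart of the argument.

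\textbf{Monotonicity and uniqueness.} Monotonicity in $x$ is propagated along the branch because $\partial_x \psi_s$ solves a linear problem with a Wentzell boundary condition to which a strong maximum principle applies, and its strict positivity is preserved by continuity in $s$ from its positivity at $s=0$. For the uniqueness part, given another classical solution $(\underline c, \overline\psi)$ of $(W)_1$, I use a sliding method in the $x$-variable: translate $\overline\psi$ far to the right so that $\overline\psi(\cdot + r, \cdot) > \psi_w$, then decrease $r$ continuously until first contact; the strong maximum principle together with a Hopf-type lemma adapted to the Wentzell condition forces coincidence, and matching the equations at a contact point then pins down $\underline c = c_w$.
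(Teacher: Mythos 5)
Your overall architecture matches the paper's: continuation in $s\in[0,1]$, openness via the implicit function theorem in exponentially weighted H\"older spaces using Fredholm structure of the linearization, closedness via a priori velocity bounds plus Schauder estimates, and sliding for uniqueness. But two of your key ingredients diverge from the paper, and one of them looks like a genuine gap.

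For the lower bound $c\geq c_{\min}>0$ you propose ``exponential decay at $-\infty$ together with a sliding-type argument excluding trivial limits.'' Sliding arguments rule out $c=0$ for a single solution but do not by themselves produce a \emph{uniform} positive lower bound along a sequence $s_n$; that uniformity is precisely what closedness needs, to prevent $c_{s_n}\to 0$. The paper's argument is structurally different and much more quantitative: integrating the equation over the strip yields the exact identity $c=\frac{1}{L+s/\mu}\int_{\Omega_L}f(\psi)$, and then a uniform interior H\"older estimate on $\psi$, together with a translation normalization fixing $\psi$ at an interior point, forces a lower bound on $\int f(\psi)$ independent of $s$. I don't see how your sketch recovers this, so I'd regard this step as a gap rather than an alternative route.

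For monotonicity you propose to propagate $\partial_x\psi_s>0$ along the branch ``by continuity.'' This is a genuinely different route from the paper, which establishes an a priori result (any classical solution of $(W)_s$ has $\partial_x\psi>0$) via a sliding lemma, independently of the branch. Your version has a subtle issue: convergence of $\psi_{s'}$ to $\psi_s$ in a weighted H\"older norm does not obviously prevent $\partial_x\psi_{s'}$ from dipping below zero in the far field where $\partial_x\psi_s$ itself is already exponentially small, and the strong maximum principle only upgrades $\geq 0$ to $>0$ once you already have $\geq 0$. The paper's a priori statement sidesteps this and also does double duty in the closedness step, where monotonicity of the limit is then automatic. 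You would need to close this loop carefully. Finally, you compress the Wentzell boundary difficulties into ``requires extra care''; the paper identifies two concrete obstacles worth flagging in a full write-up: the apparent absence of $L^p$ estimates up to a Wentzell boundary in the literature (circumvented via a weak Harnack inequality to get $\mathcal C^\alpha$ regularity of $f(\psi)$ before applying Luo--Trudinger Schauder estimates), and the fact that the unknown $c$ enters the Wentzell boundary condition, which forces an explicit handling of the adjoint kernel $e^*$ (sign, smoothness, exponential growth control) in the Lyapunov--Schmidt reduction.
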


\begin{thmInt}(Transition from Wentzell to the system)

\label{eps} 
 There exists $\varepsilon_0 > 0$ small such that for $0 < \varepsilon < \varepsilon_0$, $(S)_{\varepsilon}$ has a solution $(c_{\varepsilon},\phi_{\varepsilon},\psi_{\varepsilon})\in\mathbb R_+^*\times \mathcal C^{2,\alpha}(\mathbb R)\times \mathcal C^{2,\alpha}(\Omega_L)$ obtained by continuation from $\left(c_w,\frac{1}{\mu}\psi_w(\cdot,0),\psi_w\right)$ that satisfies $0 < \phi_{\varepsilon} < \frac{1}{\mu}$, $0<\psi_\varepsilon<1$, and $\phi_\varepsilon$ and $\psi_\varepsilon$ are increasing.
\end{thmInt}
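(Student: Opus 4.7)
The plan is to apply an implicit function theorem in the parameter $\varepsilon$, using the Wentzell solution $(c_w,\psi_w)$ of Theorem 1 as the seed at $\varepsilon=0$ (paired with $\phi = \psi_w(\cdot,0)/\mu$). Although $(S)_\varepsilon$ has the character of a singular perturbation, the problem can be recast in a form that depends smoothly on $\varepsilon$ down to $\varepsilon=0$, by using the exchange equation $d\partial_y\psi(x,0)=(\mu\phi-\psi(x,0))/\varepsilon$ to solve for $\phi$ algebraically in terms of $\psi$:
\[
\mu\phi(x)=\psi(x,0)+\varepsilon d\,\partial_y\psi(x,0).
\]
Substituting into the $\phi$-equation and multiplying by $\varepsilon$ yields a single boundary value problem on $\psi$ with a perturbed Wentzell condition,
\[
d\partial_y\psi=\frac{D}{\mu}\partial_{xx}\psi-\frac{c}{\mu}\partial_x\psi+\varepsilon d\!\left(\frac{D}{\mu}\partial_{xx}\partial_y\psi-\frac{c}{\mu}\partial_x\partial_y\psi\right)\quad\text{at }y=0,
\]
which is exactly $(W)_1$ when $\varepsilon=0$. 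This makes the continuation a regular, rather than singular, perturbation problem, at the price of requiring one extra $x$-derivative of $\partial_y\psi$ on the trace, hence a $\mathcal C^{3,\alpha}$-type working space (the extra regularity is granted by the Schauder theory applicable to $\psi_w$).

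I would then set up a functional equation $\mathcal F(\varepsilon,c,\psi)=0$ between weighted Hölder spaces chosen so that $\mathcal F$ is $\mathcal C^1$ and the asymptotic linearizations at $x=\pm\infty$ are invertible (exponential weights matched to the spectral properties of the limit equations at $0$ and $1$, exploiting $f'(1)<0$ and $c_w>0$). Translation invariance is removed by a normalization such as $\psi(0,0)=\theta$. By Theorem 1, $\mathcal F(0,c_w,\psi_w)=0$. The central step is to verify that $D_{(c,\psi)}\mathcal F(0,c_w,\psi_w)$ is an isomorphism. Its Fredholm-ness of index zero will follow from the non-degenerate behaviour at both infinities together with the standard Fredholm theory for boundary value problems of Wentzell type in cylindrical domains; the kernel reduces, thanks to Theorem 1's uniqueness, to the line spanned by $\partial_x\psi_w$, which is killed by the normalization; and surjectivity then follows from the Fredholm alternative, taking into account the freedom in $\dot c$ (whose action on $\psi_w$ is $\dot c\,\partial_x\psi_w$, the very function spanning the kernel). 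The implicit function theorem then furnishes a smooth branch $(c_\varepsilon,\psi_\varepsilon)$ for $0\le\varepsilon<\varepsilon_0$, and $\phi_\varepsilon$ is reconstructed from the formula above, automatically in $\mathcal C^{2,\alpha}(\mathbb R)$.

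The qualitative properties are then inherited from the seed by a continuity/maximum-principle argument. The strict bounds $0<\psi_w<1$ and $0<\psi_w(\cdot,0)/\mu<1/\mu$ from Theorem 1 persist on compact sets for small $\varepsilon$ by local $\mathcal C^0$-convergence, while the weighted norm controls the behaviour at $x=\pm\infty$; the strong maximum principle and Hopf lemma, applied on $\psi_\varepsilon$ (resp.\ on $\phi_\varepsilon$, which solves an elliptic ODE with a source of known sign), upgrade non-strict inequalities to strict ones. For monotonicity in $x$, one differentiates the system: $(\phi'_\varepsilon,\partial_x\psi_\varepsilon)$ satisfies the linearization with zero limits at $\pm\infty$, and since it is $\mathcal C^0$-close to the strictly positive $(\phi'_w,\partial_x\psi_w)$ on compacts, a sliding argument, or equivalently the strong maximum principle combined with the known behaviour at infinity, preserves positivity. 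The main obstacle, as expected, is the invertibility of $D_{(c,\psi)}\mathcal F$ at $\varepsilon=0$: the Wentzell operator mixes bulk and boundary differentiation, so checking Fredholm index zero in a weighted $\mathcal C^{3,\alpha}$ setting compatible with the $\varepsilon$-correction requires a careful spectral analysis of the limit problems at $x=\pm\infty$ and of the decay of $\partial_x\psi_w$, but this decay is built into the construction of $\psi_w$ in Theorem 1 and can be quantified via the linearisations of $(W)_s$ around the trivial states.
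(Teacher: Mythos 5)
Your reduction of $(S)_\varepsilon$ to a scalar boundary value problem on $\psi$ is exactly the one the paper uses, and your identification that the working space must be of type $\mathcal C^{3,\alpha}$ is also correct. However, the assertion that this turns the continuation into a \emph{regular} perturbation is where the argument breaks, and this is in fact the heart of the proof.

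The $\varepsilon$-dependent term in your boundary condition involves $\partial_{xx}\partial_y\psi$ on the trace $y=0$: a third-order operator. If the unknown $\psi$ is taken in $\mathcal C^{3,\alpha}_w$, the unperturbed Wentzell boundary operator $d\partial_y-\tfrac{D}{\mu}\partial_{xx}+\tfrac{c}{\mu}\partial_x$ produces a trace in $\mathcal C^{1,\alpha}_w$, but the $\varepsilon$-term only produces a trace in $\mathcal C^{\alpha}_w$. To make $\mathcal F(\varepsilon,c,\psi)$ a single $\mathcal C^1$ map between fixed Banach spaces, the boundary target must therefore be $\mathcal C^{\alpha}_w$; but then $D_{(c,\psi)}\mathcal F(0,c_w,\psi_w)$, the linearised Wentzell operator viewed as a map $\mathcal C^{3,\alpha}_w\to\mathcal C^{\alpha}_w$, is not Fredholm: its image is (up to finite codimension) the much smaller space $\mathcal C^{1,\alpha}_w$, so the cokernel is infinite-dimensional. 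This loss of one derivative is precisely why the paper insists the relaxation is singular, and why the statement of Theorem~\ref{w} supplies a $\mathcal C^{3,\alpha}$ seed while Theorem~\ref{eps} only returns a $\mathcal C^{2,\alpha}$ solution.

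The paper gets around this by an auxiliary function $\tilde\psi(\varepsilon,c_1,v)$ which carries \emph{all} of the $\varepsilon$-dependent boundary terms and the inhomogeneity, but solves the fixed coercive bulk equation $-\Delta u+u=0$. This bulk identity makes $\mathcal L\tilde\psi = -d\tilde\psi + c^0\partial_x\tilde\psi - f'(\psi^0)\tilde\psi$, which is first order, so it lands in $\mathcal C^{1,\alpha}_w$ even though $\tilde\psi$ itself is only $\mathcal C^{2,\alpha}_w$. The remaining problem for $v$ then has the unperturbed Wentzell boundary condition with right-hand side in $\mathcal C^{1,\alpha}_w$, where the operator is Fredholm of index zero, and the implicit function theorem applies. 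But establishing the existence and $\mathcal C^1$ parameter dependence of $\tilde\psi$ cannot be borrowed from any general elliptic theory (the perturbed boundary condition does not fit the Wentzell, Robin, or oblique-derivative frameworks), which is why the paper performs an explicit kernel analysis via the partial Fourier transform, controls the resulting convolution kernels (involving the modified Bessel function $K_0$) uniformly in $\varepsilon$, and invokes a Paley--Wiener argument to obtain the exponential decay compatible with the weight $w$. None of this appears in your proposal, and without the auxiliary-function device or an equivalent, the implicit function theorem step as you describe it does not close.

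The remainder of your outline — normalisation removing translation invariance, the kernel being spanned by $\partial_x\psi_w$, reconstruction of $\phi_\varepsilon$, and the maximum-principle/sliding arguments yielding the strict bounds and monotonicity — is consistent with the paper.
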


\begin{rmqInt}
\label{rmksing}
We wish to emphasise on this result : it consists in a singular perturbation between a system of two unknowns and a scalar boundary value problem of the Wentzell type. This is a non-standard relaxation that appears to be new in this context. Also, observe that we had to pay the loss of one derivative for solving this problem.
\end{rmqInt}

\begin{thmInt}(Existence for the full system)

\label{sys}
 There exists $(c,\phi,\psi)\in\mathbb R_+^*\times \mathcal C^{2,\alpha}(\mathbb R)\times \mathcal C^{2,\alpha}(\Omega_L)$ a solution of $(S)_1$ obtained by continuation from $(c_{\varepsilon_0},\phi_{\varepsilon_0},\psi_{\varepsilon_0})$ that satisfies $0 < \phi < \frac{1}{\mu}$, $0<\psi <1$, and $\phi$ and $\psi$ are increasing. Moreover, if $(\underline c,\overline \phi, \overline\psi)$ is a classical solution of $(S)_1$, we have $\underline c = c$ and there exists $r\in\mathbb R$ such that $\overline\phi(\cdot + r) = \phi(\cdot)$ and  $\overline\psi(\cdot + r) = \psi(\cdot)$.
\end{thmInt}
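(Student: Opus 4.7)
The natural approach is a continuation argument in the parameter $\varepsilon$, going from $\varepsilon_0$ (where Theorem \ref{eps} supplies a starting point) up to $\varepsilon = 1$, which is the original system $(S)_1$. Introduce
$$ E = \bigl\{\varepsilon \in [\varepsilon_0,1] : (S)_\varepsilon \text{ has a monotone classical solution obtained by continuation from } (c_{\varepsilon_0},\phi_{\varepsilon_0},\psi_{\varepsilon_0})\bigr\}, $$
and show that $E$ is nonempty, open and closed in $[\varepsilon_0,1]$, hence equal to it. Nonemptiness is Theorem \ref{eps}.

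For openness I would apply the implicit function theorem in suitable Hölder spaces, after killing the translation invariance by a normalization such as $\phi(0) = \theta/(2\mu)$. Linearizing $(S)_\varepsilon$ around a solution and including $c$ as an unknown yields a cooperative elliptic system in the strip with oblique boundary conditions. Its kernel on the quotient is $(\phi_\varepsilon',\partial_x\psi_\varepsilon)$ and is removed by the normalization; its Fredholm index is zero by the theory of elliptic systems in strips with exponentially decaying inhomogeneities at $\pm\infty$, and the variation in $c$ compensates any residual cokernel. One therefore gets a smooth local curve of solutions.

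The main obstacle is closedness, which requires uniform a priori estimates in $\varepsilon$. The pointwise bounds $0 < \phi_\varepsilon < 1/\mu$ and $0 < \psi_\varepsilon < 1$, together with monotonicity in $x$, are preserved along the curve by a sliding argument and the strong maximum principle applied to the cooperative exchange condition $d\partial_y\psi = (\mu\phi - \psi)/\varepsilon$. The crucial point is to confine the velocities $c_\varepsilon$ to a compact subset of $(0,+\infty)$: an upper bound comes from constructing exponentially decaying supersolutions near $(1/\mu,1)$, exploiting $f'(1) < 0$, and a lower bound follows by integrating the equation for $\psi$ against a suitable weight and using $\int_\theta^1 f > 0$. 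With $c_\varepsilon$ bounded, Schauder estimates give uniform $\mathcal C^{2,\alpha}_{\mathrm{loc}}$ bounds, and an Arzelà-Ascoli/diagonal extraction produces a limit at $\varepsilon^* = \lim\varepsilon_n$. The normalization prevents horizontal escape, and monotonicity plus the structure of $f$ (flat on $[0,\theta]$, negative slope at $1$) rule out intermediate limits, so the boundary conditions at $\pm\infty$ survive and the limit lies in $E$.

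For uniqueness, given another classical solution $(\underline c,\overline\phi,\overline\psi)$ of $(S)_1$, I would implement the sliding method of Berestycki-Nirenberg adapted to this Robin-coupled setting. After translating one solution far to the right so that it dominates the other everywhere, one slides back until contact is made somewhere; the cooperative nature of the exchange condition and the sign of $f'(1)$ allow a strong maximum principle argument on the full pair $(\phi,\psi)$ to propagate equality, which forces $\underline c = c$ and $(\overline\phi,\overline\psi) = (\phi,\psi)$ up to translation. The delicate step here is handling the coupling across $y=0$, but this is tractable precisely because the exchange term is cooperative and linear in $(\phi,\psi)$. Combining openness, closedness and uniqueness-up-to-translation closes the proof.
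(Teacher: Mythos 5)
Your continuation strategy (nonempty, open, closed in $\varepsilon\in[\varepsilon_0,1]$; a priori $C$-bounds plus Schauder for closedness; implicit function theorem for openness; sliding for uniqueness) is exactly the paper's plan, and the closedness and uniqueness parts of your sketch match Sections~\ref{epsto1}, \ref{firstprop}. Two points need repair, however.

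First, and most importantly, you dismiss the Fredholm step with an appeal to ``the theory of elliptic systems in strips with exponentially decaying inhomogeneities,'' but this is precisely where the genuine work lies, and the off-the-shelf theory does not apply. The linearized operator $\mathscr L(\phi,\psi)=(l(\phi,\psi),\mathcal L\psi)$ must be Fredholm of index $0$ on a weighted Hölder space, and the natural ``truncated'' weight ($w=e^{rx}$ for $x<0$, $w\equiv$ const for $x>1$) used in the scalar Wentzell step \emph{fails} here: after conjugating by $w$, the exchange condition $d\partial_y\psi=\mu\phi-\psi$ on $x>1$ gives no positive zeroth-order term on the road, so a maximum of $\psi$ there only yields $\psi<\mu\phi$ and the invertibility of the principal part $\tilde{\mathscr T}$ cannot be established by the maximum principle. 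The paper (Section~\ref{propLsystem}) circumvents this by choosing $w(x)=e^{rx}$ on \emph{all} of $\mathbb R$, which produces a positive zeroth-order coefficient $\alpha_r=-Dr^2+c^0r>0$ in the boundary condition; this in turn creates a new problem (the conjugated solution might blow up as $x\to+\infty$ because $w$ is unbounded), which has to be handled by constructing explicit exponentially decaying supersolutions to show $\phi,\psi\leq Ce^{-rx}$ as $x\to+\infty$. None of this is visible in your proposal, and without it the openness argument is incomplete. (Incidentally, the paper does not normalize via $\phi(0)=\theta/(2\mu)$; it leaves the kernel direction $(\phi_0',\partial_x\psi_0)$ as a free parameter and applies the IFT after projecting onto $R(\mathscr L)$ — a cosmetic but worth-noting difference.)

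Second, your description of the velocity bounds is reversed. The upper bound $c_{\max}$ (Proposition~\ref{sectioncmax}) comes from the supersolution pair $(e^{rx},\mu e^{rx})$, which grows to the right and decays to the left, i.e.\ near the state $(0,0)$, and uses $\mathrm{Lip}\,f$ (not $f'(1)<0$): if $c$ were too large one would produce a strict supersolution that by Proposition~\ref{unisol} would have to be a translate of the solution, a contradiction. Decaying supersolutions near $(1/\mu,1)$ exploiting $f'(1)<0$ are used elsewhere (Proposition~\ref{rightlimitcdsys}) to recover the right-hand limiting condition, not for $c_{\max}$. The lower bound $c_{\min}$ (Proposition~\ref{cmin}) is obtained not by weighted integration but by integrating the $\psi$-equation over the strip to get $c=\frac{1}{L+1/\mu}\int_{\Omega_L}f(\psi)$ and then bounding $\int f(\psi)$ from below using a translation normalization and a local Hölder estimate.
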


The organisation of the paper is as follows :
\begin{itemize}
 \item In Section \ref{firstprop} we show some common a priori properties for solutions of \eqref{Seps} or \eqref{frontswentzell}. In particular we deal with the uniqueness questions.
 \item In Section \ref{neumanntowentzell} we prove Theorem \ref{w}.
 \item Section \ref{epsto1} proves Theorem \ref{sys}, provided Theorem \ref{eps}, by slight modifications of Section \ref{neumanntowentzell}.
 \item Finally, we postponed the proof of Theorem \ref{eps} in Section \ref{pertsing} because of its particularity (see Remark \ref{rmksing}).
 \end{itemize}

\section{First properties}
\label{firstprop}

\subsection{A priori bounds, monotonicity, uniqueness}

This section is devoted to the proofs of a priori properties. As noticed in \cite{BRR}, the system \eqref{readi} has the structure of a monotone system, which provides a maximum principle. The elliptic counterpart of this maximum principle holds for \eqref{Seps} or \eqref{frontswentzell} and it will be our main tool along with the sliding method (\cite{BN91}) throughout the current section.

\begin{lem}
\label{phipsi}
Let $(c,\phi,\psi)$ solve some \eqref{Seps}. Then $$\text{inf }\psi \leq \mu\phi \leq \text{sup }\psi$$
\end{lem}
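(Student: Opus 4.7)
My plan is to apply the one-dimensional maximum principle to the ODE satisfied by $\phi$. Rewriting that equation in the form
\begin{equation*}
-D\phi'' + c\phi' + \frac{\mu}{\varepsilon}\phi \;=\; \frac{1}{\varepsilon}\psi(\cdot,0),
\end{equation*}
I recognise a linear elliptic equation with positive zeroth-order coefficient $\mu/\varepsilon$ and bounded source $\psi(\cdot,0)$. I would then compare $\mu\phi$ to the constants $M := \sup_{\Omega_L}\psi$ and $m := \inf_{\Omega_L}\psi$ one at a time.

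For the upper bound, I would set $w := \mu\phi - M$. Multiplying the equation above by $\mu$ and subtracting $(\mu/\varepsilon)M$ on both sides gives
\begin{equation*}
-Dw'' + cw' + \frac{\mu}{\varepsilon}w \;=\; \frac{\mu}{\varepsilon}\bigl(\psi(\cdot,0) - M\bigr) \;\leq\; 0,
\end{equation*}
so $w$ is a subsolution of an operator with positive zeroth-order part. By classical regularity of $\phi$ with finite limits $0$ and $1$ at $\pm\infty$, the function $w$ is bounded and continuous on $\mathbb{R}$, with $w(-\infty) = -M$ and $w(+\infty) = 1-M$. Since $\psi \to 1$ at $+\infty$ forces $M \geq 1$, both of these limits are non-positive. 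If $\sup w > 0$ it would therefore be attained at some finite point $x_0\in\mathbb{R}$, where $w'(x_0)=0$ and $w''(x_0)\leq 0$ combined with the displayed inequality would yield $(\mu/\varepsilon) w(x_0) \leq 0$, contradicting $w(x_0) > 0$. Hence $\mu\phi \leq \sup\psi$.

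The lower bound $\mu\phi \geq \inf\psi$ would follow by the symmetric argument applied to $\tilde w := \mu\phi - m$, which is then a supersolution, using this time that $m \leq 0$ since $\psi\to 0$ at $-\infty$, so that both limits of $\tilde w$ at $\pm\infty$ are non-negative, and a strict negative interior minimum would contradict the supersolution inequality. I don't anticipate any substantive obstacle here: the only minor subtlety is to rule out the sup/inf of $w$ and $\tilde w$ being approached only at infinity, which is automatic because the asymptotic values $0$ and $1$ of $\mu\phi$ themselves lie between $\inf\psi$ and $\sup\psi$ thanks to the matching limit conditions imposed on $\psi$.
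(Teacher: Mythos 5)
Your argument is correct and follows essentially the same route as the paper: both reduce to the one-dimensional maximum principle for the ODE satisfied by $\phi$, evaluate the equation at an interior extremum whose existence is guaranteed by the limit conditions, and read off the sign of $\psi-\mu\phi$ there. The paper compares $\phi$ with the constant $1/\mu$ and then invokes $\sup\psi \geq 1$, whereas you compare $\mu\phi$ directly with $\sup\psi$ and $\inf\psi$; this is a cosmetic reorganisation, not a different method.
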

\begin{proof}
Because of its uniform limits as $x\to\pm\infty$, necessarily $\psi$ is bounded and $|\psi|_\infty \geq 1$. Because of the limits of $\phi$, either $\phi \leq \frac{1}{\mu}$ or $\phi-\frac{1}{\mu}$ reaches a positive maximum. But then at this maximum $\phi' = 0$ and $\phi'' \leq 0$ which means $\psi - \mu \phi \geq 0$, so in every case $\phi \leq \frac{ |\psi|_\infty}{\mu}$. The other inequality is similar.
\end{proof}

\begin{prop}
\label{bounds}
Let $(c,\phi,\psi)$ be a solution of some \eqref{Seps}, then $$0 < \mu\phi, \psi < 1$$
Similarly, if $(c,\psi)$ solves some \eqref{frontswentzell}, $0 < \psi < 1$.
\end{prop}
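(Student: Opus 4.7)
The plan is to first establish $0 \leq \psi \leq 1$ by the elliptic maximum principle, distinguishing the three possible locations of an extremum, then combine with Lemma \ref{phipsi} to bound $\mu\phi$ in the system case, and finally to upgrade to strict inequalities via the strong maximum principle and Hopf's lemma. The Wentzell case \eqref{frontswentzell} and the system case \eqref{Seps} are treated in parallel, the only difference appearing at the boundary $y=0$.

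To prove $\psi \leq 1$, set $M = \sup_{\overline{\Omega_L}} \psi$. Since $\psi$ is bounded with uniform limits $0$ and $1$ at $\pm \infty$, either $M \leq 1$ and we are done, or $M > 1$ is attained at some point $(x_0, y_0) \in \overline{\Omega_L}$ with $\psi \not\equiv M$. If $(x_0, y_0)$ is interior, then $-d\Delta\psi + c\partial_x\psi \geq 0$ at the maximum, so $f(M) \geq 0$, contradicting the extension of $f$, which is strictly negative on $(1, +\infty)$. If $y_0 = -L$, Hopf's lemma applied to the linear elliptic inequality satisfied by $M - \psi$ (after writing $f(\psi) - f(M) = c_1(x,y)(\psi - M)$ via Hadamard's factorisation) forces $\partial_y\psi(x_0, -L) < 0$, contradicting the Neumann condition. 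If $y_0 = 0$, Hopf gives $\partial_y\psi(x_0, 0) > 0$; but in the system case the boundary condition reads $d\partial_y\psi(x_0,0) = \bigl(\mu\phi(x_0) - M\bigr)/\varepsilon \leq 0$ by Lemma \ref{phipsi}, and in the Wentzell case it reads $d\partial_y\psi(x_0,0) = (s/\mu)\bigl(D\partial_{xx}\psi - c\partial_x\psi\bigr)(x_0, 0) \leq 0$ since at this boundary maximum $\partial_x\psi(x_0, 0) = 0$ and $\partial_{xx}\psi(x_0, 0) \leq 0$. The lower bound $\psi \geq 0$ is obtained symmetrically with $m = \inf \psi$; a key simplification is that $f \equiv 0$ on $(-\infty, 0]$ by the extension, so $\psi - m$ satisfies a linear homogeneous equation near an interior minimum and the strong minimum principle applies directly.

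Strict inequalities $0 < \psi < 1$ follow from the strong maximum principle applied to the linear elliptic inequality satisfied by $1 - \psi$ (resp.\ $\psi$) via the Hadamard factorisation; an interior touch forces $\psi$ to be constant, contradicting the limits, while boundary touches at $y=0$ or $y=-L$ are excluded by the same Hopf-plus-boundary-condition argument as above. For $\mu\phi$ in the system case, Lemma \ref{phipsi} gives $0 \leq \mu\phi \leq 1$; strictness comes from the ODE for $\phi$: at a hypothetical minimum $x_0$ with $\mu\phi(x_0) = 0$, one would have $-D\phi''(x_0) + c\phi'(x_0) \leq 0$, hence $\psi(x_0, 0) \leq \mu\phi(x_0) = 0$, contradicting $\psi > 0$, and symmetrically for $\mu\phi < 1$. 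The main technical point is the analysis at $y = 0$: in the Wentzell case it relies on the sign of the tangential second derivative at an extremum (legitimate since $\psi$ is $\mathcal{C}^2$ up to the boundary), while in the system case it hinges essentially on Lemma \ref{phipsi}, which provides the only available control on the coupling term $\mu\phi$ at such points.
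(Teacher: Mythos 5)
Your proof is correct and follows essentially the same strategy as the paper: locate the extremum of $\psi$, rule out the interior case by the sign of $f$, rule out $y=-L$ by Hopf, and rule out $y=0$ by combining Hopf with either Lemma~\ref{phipsi} (system) or the tangential-derivative sign at an extremum (Wentzell), then upgrade to strict inequalities by the strong maximum principle. The only cosmetic differences are that you establish both weak bounds on $\psi$ before invoking Lemma~\ref{phipsi} for $\phi$, whereas the paper interleaves the two, and you apply the weak maximum principle at an interior maximum where the paper invokes the strong one; both yield the same conclusion.
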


\begin{proof}
Suppose there exists a point $(x_0,y_0)$ where $\psi(x_0,y_0)>1$. Then, because $\psi$ is assumed to have limits as $x\to\pm\infty$, we see that $\psi-1$ must reach a positive maximum somewhere.  But $\psi-1$ satisfies locally at this point $-d\Delta(\psi-1) + c\partial_x(\psi-1) < 0$. This point cannot be in $\Omega_L$ by the strong maximum principle, since $\psi-1$ would be locally constant which is impossible by looking at the equation. So it has to be on the boundary. It cannot be on $y=-L$ because of the Hopf lemma. So it has to be on $y=0$ and by the Hopf lemma, $\mu\phi > \psi$ at this point, what is impossible because of Lemma \ref{phipsi}. So $\psi \leq 1$, and then $\phi \leq \frac{1}{\mu}$. 

Now knowing these bounds and that the solutions are not constants, comparison with $0$ by the strong maximum principle gives $\phi < \frac{1}{\mu}, \psi < 1$.

Finally, since $\psi < 1$, $f(\psi) \geq 0$ and the strong maximum principle along with Lemma \ref{phipsi} gives $\psi > 0$ and then $\phi > 0$.

The same proof holds for equation \eqref{frontswentzell}, the case $y_0 = 0$ being treated by the sole Hopf lemma thanks to the sign of $\partial_{xx}\psi$ and the nullity of $\partial_x \psi$ on an extremum of $\psi$. 

\end{proof}

We turn now to the monotonicity of the fronts, using the sliding method of \cite{BN90} and simplified in \cite{V} in the travelling waves context. We start with a fundamental lemma which asserts that we can slide a supersolution above a subsolution by translating it enough to the left. This lemma is valid for any reaction term such that $f(0) = f(1) = 0$ and $f'(0), f'(1) \leq 0$.

\begin{defi}
We call a super (resp. sub) solution of some \eqref{Seps} or \eqref{frontswentzell} a function which satisfies those equations with the $=$ signs replaced by $\geq$ (resp. $\leq$), and the uniform limits replaced by some constants $\geq 0, \geq 1$ (resp. $\leq 0, \leq 1$).
\end{defi}

\begin{lem}
\label{fonda}
Let $(c,\underline\phi,\underline\psi)$ be a subsolution of some \eqref{Seps} and $(c,\overline\phi,\overline\psi)$ a supersolution. Then there exists $r_0$ such that for all $r \geq r_0$, $\overline\phi^r := \overline\phi(r + \cdot),  \overline\psi^r := \overline\psi(r + \cdot,\cdot)$ satisfy
$$\overline\phi^r > \underline\phi, \overline\psi^r > \underline\psi$$
The same holds with $\underline\psi$, $\overline\psi$ resp. sub and supersolution of some \eqref{frontswentzell}.
\end{lem}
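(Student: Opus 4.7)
The plan is the classical preparatory lemma of the sliding method (cf.\ \cite{BN90}, \cite{V}), adapted to handle the coupling between $\phi$ and $\psi$ in \eqref{Seps}. Write $w^r := \overline\psi(\cdot+r,\cdot) - \underline\psi$ and $\xi^r := \overline\phi(\cdot+r) - \underline\phi$; the goal is $w^r, \xi^r > 0$ everywhere, for every $r$ large enough.

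First, I set up a priori bounds and a useful decomposition of the domain. By the maximum-principle argument of Proposition~\ref{bounds}, together with the extension of $f$ made negative above $1$ and zero below $0$, one has $\underline\psi,\mu\underline\phi\le 1$ and $\overline\psi,\mu\overline\phi\ge 0$. Fix $\eta>0$ small enough that $f' \le 0$ on $[-\eta,\eta]\cup[1-\eta,1+\eta]$; this is possible because $f\equiv 0$ on $[0,\theta]$ and $f'(1)<0$. By uniform convergence at $\pm\infty$, there is $A>0$ such that $\underline\psi,\mu\underline\phi,\overline\psi,\mu\overline\phi$ all lie within $\eta$ of their respective limits on $\{|x|\ge A\}$. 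Then for $r\ge 2A$, on $\{x\ge -A\}$ we have $x+r\ge A$, so $\overline\psi^r,\mu\overline\phi^r \ge 1-\eta$; combined with $\underline\psi,\mu\underline\phi\le 1$, this yields the universal lower bound $w^r\ge -\eta$ (and likewise $\mu\xi^r\ge -\eta$).

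Subtracting sub- and supersolution inequalities, $(w^r,\xi^r)$ satisfies the coupled linear system
\begin{equation*}
-d\Delta w^r + c\partial_x w^r - q(x,y)\,w^r \ge 0 \text{ in }\Omega_L, \quad \partial_y w^r \le 0 \text{ on }y=-L,
\end{equation*}
\begin{equation*}
d\partial_y w^r + \tfrac{1}{\varepsilon}\bigl(w^r(\cdot,0)-\mu\xi^r\bigr)\ge 0 \text{ on }y=0,
\end{equation*}
\begin{equation*}
-D(\xi^r)'' + c(\xi^r)' + \tfrac{\mu}{\varepsilon}\xi^r - \tfrac{1}{\varepsilon}w^r(\cdot,0)\ge 0 \text{ on }\mathbb R,
\end{equation*}
with $q := \int_0^1 f'(t\overline\psi^r + (1-t)\underline\psi)\,dt$ bounded and, by the previous step, satisfying $q\le 0$ on the \emph{good} strips $\{x\le -A-r\}\cup\{x\ge A\}$. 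Now argue by contradiction: suppose $W := \min(\inf w^r, \inf\mu\xi^r) < 0$. Since limits at $\pm\infty$ are $\ge 0$, $W$ is attained at a finite point, and I rule out every location. An interior attainer of $w^r$ in a good strip gives $w^r\equiv W<0$ locally by the strong minimum principle (using $q\le 0$), which propagates by connectivity to the non-negative asymptotic limits, a contradiction; an attainer on $y=-L$ contradicts Hopf against $\partial_y w^r\le 0$; an attainer on $y=0$, via Hopf (giving $\partial_y w^r<0$ strictly) and the boundary coupling, forces $\mu\xi^r<W$ there, contradicting $\mu\xi^r\ge W$; an attainer of $\xi^r$ at some $x_0$, via the ODE inequality at a minimum, yields $w^r(x_0,0)=W$, reducing to the previous case. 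An attainer in a \emph{bad} transition strip where $q$ could be positive is handled by the lower bound $w^r\ge -\eta$: taking $\eta$ arbitrarily small (and adjusting $r$ accordingly) pushes $W$ above $0$. Once $W\ge 0$, strict positivity $w^r,\xi^r>0$ follows from one more application of the strong minimum principle and Hopf, since neither quantity is identically zero.

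The Wentzell case \eqref{frontswentzell} runs analogously and is even simpler: with no $\xi^r$, at a putative boundary minimum on $y=0$ the Wentzell condition with $\partial_x w^r=0$ and $\partial_{xx}w^r\ge 0$ forces $d\partial_y w^r\ge 0$, contradicting the strict Hopf sign. The main obstacle throughout is orchestrating the coupling on $y=0$ and on $\mathbb R$: the four-way case analysis rules out a shared negative infimum between $w^r$ and $\xi^r$, but this hinges on the quantitative choice of $\eta$ and $A$ in the first step, which synchronizes the sign of $q$ with the geometry of the sub- and supersolution transitions.
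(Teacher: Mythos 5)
Your overall plan — subtract, set $w^r = \overline\psi^r - \underline\psi$ and $\xi^r = \overline\phi^r - \underline\phi$, and rule out a negative global infimum by a case-by-case strong-minimum-principle / Hopf argument — is in the same spirit as the paper's, and your treatment of the $y=-L$ boundary, the $y=0$ coupling via Hopf, and the ODE minimum on $\mathbb R$ is sound. However, there is a genuine gap in the step that disposes of the ``bad transition strip'' where the zero-order coefficient $q$ is not known to be non-positive.

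You establish $q\le 0$ only on $\{x\le -A-r\}\cup\{x\ge A\}$, and dismiss a putative attainer in the complementary strip (of width $\gtrsim 2A+r$, which grows with $r$) with the remark that ``taking $\eta$ arbitrarily small (and adjusting $r$ accordingly) pushes $W$ above $0$.'' This does not close. For a fixed admissible $\eta$ and a fixed $r\ge r_0(\eta)$ you only obtain $W\ge -\eta$, whereas the lemma requires $W>0$ for each such $r$; letting $\eta\to 0$ produces no contradiction for the fixed $r$ you are trying to conclude about. Moreover, without $q\le 0$ (or an equivalent one-sided argument) near the minimum, neither the strong minimum principle nor Hopf's lemma apply, so the case analysis on $y=0$ and in the interior is not actually justified where you need it most.

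What makes the argument close — and is precisely the paper's key observation — is something you did not articulate: at a negative minimum $p$ of $w^r$, the ordering $\overline\psi^r(p)<\underline\psi(p)$ holds automatically, and combined with the bounds $\overline\psi^r\ge 1-\eta$ on $\{x\ge -A\}$ (once $r\ge 2A$) and $\underline\psi\le\eta$ on $\{x\le -A\}$, \emph{both} $\overline\psi^r(p)$ and $\underline\psi(p)$ are forced into a single flat region of $f$: both $\ge 1-\eta$ if $x_p\ge -A$ (since then $\underline\psi(p)>\overline\psi^r(p)\ge 1-\eta$), or both $\le\eta$ if $x_p\le -A$ (since then $\overline\psi^r(p)<\underline\psi(p)\le\eta$). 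By continuity this persists on a small ball around $p$, so $f(\overline\psi^r)-f(\underline\psi)\ge 0$ there and the strong minimum principle and Hopf apply with no $\eta$-limit needed. The paper packages this by proving the order on $x\ge a$ and on $x\le b$ separately (the $x\le b$ case using exactly the ``free'' ordering at a minimum), and then absorbing the compact middle by a further translation; your single global argument can be made correct and arguably cleaner once you replace the $\eta\to 0$ hand-wave with this local observation, but as written the bad-strip dismissal is a non sequitur.
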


\begin{proof}
We present only the proof for \eqref{Seps}, the case \eqref{frontswentzell} being simpler. We also assume that $\underline\psi, \mu\underline\phi, \overline\psi, \mu\overline\phi \to_{x\to -\infty} 0$ and $\underline\psi, \underline\phi, \overline\psi, \mu\overline\phi \to_{x\to +\infty} 1$, the case of different limits being considerably simpler. 

First we show that by translating enough, we have the desired order on some $x \geq a$ (which is trivial if the sub and the supersolution have different limits to the right). Fix $\varepsilon > 0$ small enough such that $f' \leq 0$ on $[0,\varepsilon] \cup [1-\varepsilon,1]$. Because of the conditions at $\pm\infty$ it is clear that there exists $r_1 > 0$ and $a>0$ large enough such that 
\begin{alignat}{1}
&\mu\underline\phi, \underline\psi > 1-\varepsilon\text{ on }x\geq a\label{fonda_cond1} \\ 
&\text{For all }r\geq r_1\text{, }\mu\overline\phi^r(a) > \mu\underline\phi(a)\text{, } \overline\psi^r(a,y) > \underline\psi(a,y\label{fonda_cond2}) \\ 
&\text{For all }r\geq r_1\text{, }\mu\overline\phi^r > 1-\varepsilon\text{, }\overline\psi^r > 1-\varepsilon\text{ on }x\geq a \label{fonda_cond3}
\end{alignat}
Conditions \eqref{fonda_cond2} and \eqref{fonda_cond3} are obtained simultaneously by taking $r_1$ large enough. We assert that this suffices to have 
$$\mu\overline\phi^r > \mu\underline\phi, \overline\psi^r > \underline\psi \text{ on } x\geq a\text{ for all }r \geq r_1$$
Indeed, call $U = \overline\phi^r - \underline\phi$ and $V = \overline\psi^r - \underline\psi$. Then in $[a,+\infty[\times[-L,0]$ we have the following~:
\begin{equation*}
  \begin{tikzpicture}
  \draw (-5.5,0) -- (7.5,0) node[pos=0.5,below] {\small{$d\partial_y V + V \geq \mu U$}} node[pos=0.5,above] {$-U'' + cU' +  \mu U \geq V$};

  \node at (6.8,0.33) {$U \to 0$};

  \node at (1.5,-2) {$LV := -d\Delta V  + c\partial_x V \geq f(\overline\psi^r) - f(\underline\psi) $};

  \draw (-5.5,-4) -- (7.5,-4) node[pos=0.5,above] {\small{$-\partial_y V \geq 0$}};

 \draw[blue] (-5.5,-4) -- (-5.5,0) node[pos=0.5,left,text=blue]{$V > 0$};

  \node at (6.8,-2) {$V \to 0$};
  
  \ptn(-5.5,0)(0.1)(blue);
  \node[text=blue] at (-5.5,0.4) {$U > 0$};

  \filldraw [fill=gray!20,draw=black]
(-1.5,0) arc (180:0:-1)
(-3.5,0) -- (-1.5,0)
(-2.5,-0.4) node[below] {$B_-$}
(-2.5,0) node[above] {$p$};

\filldraw [fill=gray!20,draw=black]
(-2.5,-4) arc (180:0:1)
(-2.5,-4) -- (-0.5,-4)
(-1.5,-3.9) node[above] {$B_+$}
(-1.5,-4) node[below] {$p$};

\filldraw [fill=gray!20,draw=black]
(-3,-2) arc (360:0:1)
(-4,-1.8) node[above] {$B$}
(-4,-2) node[right] {$p$};

\ptn(-2.5,0)(0.05)(black);
\ptn(-1.5,-4)(0.05)(black);
\ptn(-4,-2)(0.05)(black);

  \end{tikzpicture}
\end{equation*}
Suppose there is a point where $V < 0$. Then because of its limits, $V$ reaches a minimum $V(p) = m < 0$ at some $p \in ]a,+\infty[\times[-L,0]$. 
\begin{itemize}
\item Case 1 : $p\in ]a,+\infty[\times]-L,0[$. By continuity, there is a ball $B$ around $p$ such that on $B$, $1 - \varepsilon < \overline\psi^r < \underline \psi$. On $B$, $LV \geq f(\overline\psi^r) - f(\underline\psi) \geq 0$ since $f(s)$ is decreasing on $s \geq 1-\varepsilon$. By the strong maximum principle, $V \equiv m$ in $B$. Thus, $\{V = m\}$ is open. Being trivially closed and being non-void, it is all of $[a,+\infty[\times[-L,0]$ which is a contradiction.

\item Case 2 : $p$ lies on $y=-L$. Again, by continuity there is a half ball $B_+$ just as $B$ in the previous case. By the Hopf lemma, since $\partial_y V(p) \geq 0$ necessarily $V = m$ is also reached in the interior of $B_+$, and we fall in case 1.

\item Case 3 : $p$ lies on $y=0$. Taking another half ball $B_-$ as above, either we fall in case 1 or $\mu U < m < 0$ at $x_p$. But this is impossible also. Indeed, $U$ would reach a negative minimum somewhere, but looking at the equation it satisfies at that minimum, $\mu U \geq V \geq m$.
\end{itemize}
Every case leading to a contradiction, we conclude that $\overline\psi^r - \underline\psi \geq 0$ on $x\geq a$. The strong maximum principle applied on $U$ yields now $\overline\phi^r - \underline\phi \geq 0$ on $x\geq a$ and that the orders are strict.

We do the same thing for $x \leq b$ up to the following subtlety : we can only ask for the following conditions 
\begin{alignat}{1}
&\mu\underline\phi, \underline\psi < \varepsilon\text{ on }x\leq b\label{fonda_cond21} \\ 
&\text{For all }r\geq r_2\text{, }\mu\overline\phi^r(b) > \mu\underline\phi(b)\text{, } \overline\psi^r(b,y) > \underline\psi(b,y\label{fonda_cond22}) 
\end{alignat}
Of course an equivalent of condition \eqref{fonda_cond3} is not available here : we cannot ask to put the supersolution everywhere below $\varepsilon$ on $x\leq b$ whereas before it was automatic to put it above $1-\varepsilon$. Nonetheless, the exact same proof as above works, since on an eventual minimum of $\overline\psi^r - \underline\psi$ we would have this order for free : $\overline\psi^r < \underline\psi < \varepsilon$. 

Finally, taking $r_3 = \max(r_1,r_2)$ we end up with the supersolution above the subsolution on all $x\not\in]b,a[$ : thanks to the uniform limits of $\overline\phi^{r_3}, \overline\psi^{r_3}$ to the right, we just have to translate enough again to cover the compact region left.

For the case of equation \eqref{frontswentzell}, just observe that case 3 is similar to case 2, since on a minimum, $\partial_{xx} V \geq 0, \partial_x V = 0$.

\end{proof}

\begin{rmq}
The use of the maximum principle in the proof above could be simplified, since on $x \geq a$ we know the sign of $$k(x,y) = -\frac{f(\overline \psi^r) - f(\underline\psi)}{\overline\psi^r - \underline\psi} \in L^\infty$$
we could apply it directly in all $\Omega_L$ with the operator $-d\Delta + c\partial_x + k$. Nonetheless, this is not true any more on $x \leq b$ and this is the reason why we chose the proof above.
\end{rmq}

\begin{prop}
\label{mono}
Let $(c,\phi,\psi)$ be a solution of some \eqref{Seps}, then $$\phi', \partial_x \psi > 0$$
Similarly, if $(c,\psi)$ solves some \eqref{frontswentzell}, $\partial_x\psi > 0$.
\end{prop}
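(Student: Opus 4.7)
The plan is the sliding method of \cite{BN90, V}, with Lemma \ref{fonda} as the main tool. By translation invariance of the equations in $x$, the translated triple $(c, \phi^r, \psi^r) := (c, \phi(\cdot + r), \psi(\cdot + r, \cdot))$ is again a solution of \eqref{Seps} for every $r \in \mathbb R$, hence simultaneously a sub- and a supersolution. Applying Lemma \ref{fonda} with $\underline\phi = \phi, \underline\psi = \psi$ and $\overline\phi = \phi, \overline\psi = \psi$ yields some $r_0 > 0$ such that $\phi^r > \phi$ and $\psi^r > \psi$ for all $r \geq r_0$. This allows one to define
$$ r^{*} := \inf\bigl\{ r \geq 0 \ : \ \phi^{r'} \geq \phi \text{ and } \psi^{r'} \geq \psi \text{ for all } r' \geq r \bigr\} \in [0, r_0], $$
and by continuity $\phi^{r^{*}} \geq \phi$, $\psi^{r^{*}} \geq \psi$. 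The goal is to show $r^{*} = 0$.

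Assume by contradiction $r^{*} > 0$ and set $W := \phi^{r^{*}} - \phi \geq 0$, $V := \psi^{r^{*}} - \psi \geq 0$. Subtracting the equations satisfied by the two solutions gives a linear coupled system for $(W, V)$ of the same structure as in the proof of Lemma \ref{fonda}, with a coefficient $k(x,y) := -(f(\psi^{r^{*}}) - f(\psi))/(\psi^{r^{*}} - \psi) \in L^{\infty}$ absorbing the reaction term. Arguing exactly as in Proposition \ref{bounds}, via the strong maximum principle in $\Omega_L$, the Hopf lemma on $y=-L$ and on $y = 0$ (case 3 handled by the coupling boundary condition and the sign of $W$), one obtains the dichotomy: either $V \equiv 0$ and $W \equiv 0$, in which case $\phi, \psi$ are $r^{*}$-periodic in $x$ contradicting the limits at $\pm\infty$; or $V > 0$ strictly in $\overline{\Omega_L}$ and $W > 0$ strictly on $\mathbb R$.

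In the second alternative, one contradicts the definition of $r^{*}$ by producing $r < r^{*}$ still in the admissible set. The argument is precisely the ``infinity'' content of the proof of Lemma \ref{fonda}: the conditions \eqref{fonda_cond2} and \eqref{fonda_cond22}, which are open in $r$, are enough to propagate the order on $\{x \geq a\}$ and $\{x \leq b\}$ once $a$ and $-b$ are chosen large enough (using $f' \leq 0$ near $0$ and $1$), independently of $r$ close to $r^{*}$. On the remaining compact strip $[b, a] \times [-L, 0]$, the strict positivity of $V$ and $W$ at $r^{*}$ combined with continuity of $r \mapsto (\phi^{r}, \psi^{r})$ yields the order for $r$ slightly below $r^{*}$. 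Hence $r^{*} = 0$, so $\phi^{r} \geq \phi$ and $\psi^{r} \geq \psi$ for all $r > 0$, which exactly means that $\phi$ and $\psi$ are non-decreasing in $x$.

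Strictness finally follows from the strong maximum principle applied to $\phi'$ and $\partial_x \psi$, which satisfy the linearised system with the same boundary conditions and are non-negative (as limits of $(\phi^r - \phi)/r$ etc.\ as $r \to 0^{+}$), but are not identically zero since the limits at $\pm\infty$ differ. The proof for \eqref{frontswentzell} is identical, with the boundary analysis on $y = 0$ handled as in Proposition \ref{bounds} through the sign of $\partial_{xx}\psi$ and the nullity of $\partial_x \psi$ at an $x$-extremum. The \emph{main obstacle} is the unbounded-domain sliding step: one cannot close the argument by a purely compact continuity, and it is necessary to recycle the ``stability at infinity'' content of Lemma \ref{fonda} to justify the decrease of $r^{*}$.
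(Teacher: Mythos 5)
Your proof is correct and follows essentially the same route as the paper: Lemma~\ref{fonda} gives the initial translate, one takes the infimum $r^*$ of admissible translations, the strong maximum principle plus Hopf give the strict dichotomy $U,V>0$ or $U,V\equiv 0$, and the minimality of $r^*$ is contradicted by combining compact continuity on a truncated strip with the open-in-$r$ conditions at infinity from Lemma~\ref{fonda}, exactly as the paper does. The final strict positivity of $\phi'$ and $\partial_x\psi$ by differentiating the equations and applying the strong maximum principle is also the paper's argument.
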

\begin{proof}
Use Lemma \ref{fonda} with the solution serving as the sub and the supersolution at the same time : we can translate some $\mu\phi^r, \psi^r$ over $\mu\phi, \psi$. Call $r$ the $\inf$ of such $r_0$, i.e. slide back until the solutions touch (which clearly happens since at $r_0 = 0$ they are the same). Monotonicity will be proved if we show that 
$$r = 0$$
Suppose by contradiction that $r > 0$. By continuity
\begin{alignat*}{1}
U := \phi^r - \phi \geq 0 \\
V:= \psi^r - \psi \geq 0
\end{alignat*}
Moreover, $U, V$ solves 
\begin{equation*}
  \begin{tikzpicture}
  \draw (-7.5,0) -- (7.5,0) node[pos=0.5,below] {\small{$d\partial_y V + V = \mu U$}} node[pos=0.5,above] {$-U'' + cU' +  \mu U = V$};

  \node at (6.8,0.33) {$U \to 0$};
  \node at (-6.8,0.33) {$0 \leftarrow U$};

  \node at (0,-1.5) {$-d\Delta V  + c\partial_x V - k(x,y)V = 0$};

  \draw (-7.5,-3) -- (7.5,-3) node[pos=0.5,above] {\small{$-\partial_y V = 0$}};
  
  \node at (6.8,-1.5) {$V \to 0$};
  
  \node at (-6.8,-1.5) {$0 \leftarrow V$};
  \end{tikzpicture}
\end{equation*}
with $k(x,y) = -\frac{f(\psi^r) - f(\psi)}{\psi^r - \psi} \in L^\infty$. The strong maximum principle and Hopf's lemma for comparison with a minimum that is $0$ gives that $U,V > 0$ (otherwise $\phi, \psi$ would be periodic in $x$, which is impossible). Then by continuity, for any compact $$K_a = [-a,a]\times\left( [-a,a]\times[-L,0]\right) $$ we can still translate a bit more to the right while keeping the order : $$\mu\phi^{r-\varepsilon_a} > \mu\phi\text{, }\psi^{r-\varepsilon_a} > \psi\text{ on }K_a$$ for some small $\varepsilon_a > 0$. Now just do this with $a$ large enough so that on $x\leq a$, $\psi < \varepsilon$ and on $x\geq a$, $\psi > 1-\varepsilon$ (and so $\psi^{r-\varepsilon_a}$ too, even on $x \geq a - \varepsilon_a$). Then the exact same proof as in Lemma \ref{fonda} applies to conclude that $$\mu\phi^{r-\varepsilon_a} > \mu\phi\text{, }\psi^{r-\varepsilon_a} > \psi$$ everywhere, which is a contradiction with the minimality of $r$.

We now know that $\phi$ and $\psi$ are increasing in $x$, that is $\phi',\partial_x \psi \geq 0$. To conclude that $\phi', \partial_x\psi > 0$ everywhere, just differentiate \eqref{Seps} or \eqref{frontswentzell} with respect to $x$ and apply the strong maximum principle and Hopf's lemma for comparison with a minimum $0$. We emphasise on the fact that this result is valid up to the boundary of $\Omega_L$.

\end{proof}

The proof above gives directly the following rigidity result and his corollaries :
\begin{prop}(Uniqueness among sub or supersolutions.)

\label{unisol}
Fix $c > 0$. If \eqref{Seps} has a solution, then every supersolution or subsolution is a translated of this solution. The same holds for \eqref{frontswentzell}.
\end{prop}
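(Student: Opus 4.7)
The plan is to repeat almost verbatim the sliding argument of Proposition \ref{mono}, but comparing the fixed solution with an arbitrary super- or subsolution rather than with a translate of itself. The only conceptual difference is that the sliding no longer starts from an automatic tangency at $r=0$: we must rely on Lemma \ref{fonda} at both ends of the sliding interval. I treat the supersolution case; the subsolution case is symmetric after swapping the roles in Lemma \ref{fonda}.

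Let $(c,\phi,\psi)$ be the solution and $(c,\overline\phi,\overline\psi)$ a supersolution. Applying Lemma \ref{fonda} with $(c,\phi,\psi)$ as subsolution and $(c,\overline\phi,\overline\psi)$ as supersolution produces an $r_0$ such that $\overline\phi^{r}>\phi$ and $\overline\psi^{r}>\psi$ for every $r\geq r_0$. I then set
$$r^*:=\inf\bigl\{\,r\in\mathbb R\ :\ \overline\phi^{s}\geq\phi\text{ and }\overline\psi^{s}\geq\psi\text{ for every }s\geq r\bigr\},$$
which is finite: the set contains $[r_0,+\infty)$, and comparing the respective limits at $-\infty$ of $\overline\phi^s$ and $\phi$ shows that the set cannot extend arbitrarily far to the left. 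By continuity, $\overline\phi^{r^*}\geq\phi$ and $\overline\psi^{r^*}\geq\psi$ everywhere.

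The core step is to show that these inequalities are actually equalities. Setting $U:=\overline\phi^{r^*}-\phi\geq 0$ and $V:=\overline\psi^{r^*}-\psi\geq 0$, and using the autonomy of \eqref{Seps} in $x$, subtracting the equations satisfied by $\phi,\psi$ from the supersolution inequalities satisfied by $\overline\phi^{r^*},\overline\psi^{r^*}$ yields exactly the coupled elliptic/Hopf system already used in the proof of Proposition \ref{mono}, with zero-order term $k(x,y)=-(f(\overline\psi^{r^*})-f(\psi))/(\overline\psi^{r^*}-\psi)\in L^\infty$. The strong maximum principle and the Hopf lemma applied to $(U,V)$ as in Proposition \ref{mono} force the dichotomy: either $(U,V)\equiv(0,0)$, which is the desired conclusion, or $U,V>0$ strictly in the closed strip. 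In the latter case I reproduce the final paragraph of the proof of Proposition \ref{mono}: on any large compact $K_a$, strict positivity and continuity allow me to decrease $r^*$ by some $\varepsilon_a>0$ while keeping the strict ordering on $K_a$; outside $K_a$, choosing $a$ so large that $\psi<\varepsilon$ on $\{x\leq -a\}$ and $\psi>1-\varepsilon$ on $\{x\geq a\}$, the proof of Lemma \ref{fonda} applies verbatim to the pair $(\overline\phi^{r^*-\varepsilon_a},\overline\psi^{r^*-\varepsilon_a})$ against $(\phi,\psi)$. This propagates the strict ordering to all of $\Omega_L$ at parameter $r^*-\varepsilon_a$, contradicting the minimality of $r^*$. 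Hence $\overline\phi(\cdot)=\phi(\cdot-r^*)$ and $\overline\psi(\cdot,\cdot)=\psi(\cdot-r^*,\cdot)$.

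The main obstacle is really a preliminary bookkeeping issue rather than a new argument: a supersolution is only assumed to have limits $\geq 0$ at $-\infty$ and $\geq 1$ at $+\infty$, which need not match the limits $0$ and $1$ of the solution. A short preamble is therefore needed to extend Lemma \ref{phipsi} and Proposition \ref{bounds} to super- and subsolutions; the proofs go through because the strong maximum principle and the Hopf lemma use only the sign of the relevant elliptic expression. Once those a priori bounds are secured, the limits of $\overline\phi,\overline\psi$ are either exactly $0$ and $1/\mu,1$ (in which case the sliding scheme above applies) or strictly separated from those of $(\phi,\psi)$ (in which case Lemma \ref{fonda} is actually easier). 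The statement for \eqref{frontswentzell} is proved identically, the scalar Hopf lemma at $y=0$ replacing the coupled boundary inequality on $U$ and $V$, as already remarked in the proof of Lemma \ref{fonda}.
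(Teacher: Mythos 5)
Your core argument coincides with the paper's: define $r^*$ as the infimum of admissible translates obtained from Lemma~\ref{fonda}, pass to the limit to get $U,V\geq 0$, apply the strong maximum principle and Hopf lemma to obtain the dichotomy $U,V\equiv 0$ or $U,V>0$, and rule out the second case by repeating the compact-plus-Lemma~\ref{fonda} sliding step from Proposition~\ref{mono}. The paper's own proof is exactly this, expressed more tersely by simply referring back to Proposition~\ref{mono}.

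There is, however, a flaw in your preamble. You assert that Lemma~\ref{phipsi} and Proposition~\ref{bounds} ``extend to super- and subsolutions because the strong maximum principle and Hopf lemma use only the sign of the relevant elliptic expression.'' This is not true in the directions you need. In Lemma~\ref{phipsi}, at an interior maximum of $\overline\phi$ one has $-D\overline\phi''+c\overline\phi'\geq 0$; for a supersolution this sits on the \emph{same} side of the inequality $-D\overline\phi''+c\overline\phi'\geq(\overline\psi-\mu\overline\phi)/\varepsilon$, so no sign conclusion on $\overline\psi-\mu\overline\phi$ follows. Likewise, the upper bound $\overline\psi\leq 1$ in Proposition~\ref{bounds} uses the \emph{equation} to get $-d\Delta(\psi-1)+c\partial_x(\psi-1)=f(\psi)<0$ at a positive maximum; a supersolution only gives $\geq f(\overline\psi)$, which is useless when $f(\overline\psi)<0$. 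Indeed the constant pair $(\overline\phi,\overline\psi)\equiv(C/\mu,C)$ with $C>1$ is a valid supersolution and violates both extensions. Consequently, the finiteness of $r^*$ (equivalently the touching) is \emph{not} automatic from a priori bounds; you must assume, as the paper does implicitly when invoking the limit conditions, that the super/subsolution has a limit at $-\infty$ strictly below $\sup\psi$ (in particular this is satisfied in the only place the proposition is used, Proposition~\ref{uniqueness}, where the competitor is an actual solution at a different speed and hence has exact limits $0$ and $1$). The sliding argument itself is correct; the gap is only in the justification that $r^*>-\infty$ for an arbitrary supersolution.
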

\begin{proof}
Denote $(\phi,\psi)$ the solution mentioned and $(\overline\phi, \overline\psi)$ an eventual supersolution. Let $r, U, V$ be as in the proof of Proposition \ref{mono} (this time $r$ exists thanks to the limit conditions~: at some point the supersolution and the solution touch). We end up with either $U, V > 0$ or $U, V \equiv 0$. The first case is impossible for the exact same argument as in Proposition \ref{mono} and this concludes the proof.
\end{proof}

\begin{prop}(Uniqueness of the velocity and the profiles up to translation.)
\label{uniqueness} \ 
\begin{enumerate}
\item There is a unique $c_\varepsilon \in\mathbb R$ such that \eqref{Seps} can have solutions. The same holds for $c_s$ with \eqref{frontswentzell}.
\item Solutions of \eqref{Seps} are unique up to $x$-translations, i.e. if $(c,\phi_1,\psi_1)$ and $(c,\phi_2,\psi_2)$ are solutions of \eqref{Seps}, then there exists $r \in \mathbb R$ such that $$\phi_2(\cdot+r) = \phi_1(\cdot), \psi_2(\cdot+r,\cdot) = \psi_1(\cdot,\cdot)$$ The same holds for \eqref{frontswentzell}.
\end{enumerate}
\end{prop}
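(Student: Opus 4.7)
The plan is to derive both assertions from the rigidity result Proposition \ref{unisol}, combined with the strict monotonicity $\phi',\partial_x\psi>0$ established in Proposition \ref{mono}.

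I would begin with part (2), which is essentially immediate: any classical solution is trivially a supersolution of the very equation it satisfies (equalities are admissible in the definition). So if $(c,\phi_1,\psi_1)$ and $(c,\phi_2,\psi_2)$ both solve \eqref{Seps} with the same velocity $c$, applying Proposition \ref{unisol} with $(c,\phi_2,\psi_2)$ viewed as a supersolution yields the translation $r$. The same argument gives uniqueness up to translation for \eqref{frontswentzell}.

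For part (1), I would argue by contradiction: suppose there exist solutions with two different velocities, say $c_1>c_2$. The key observation is that a solution with the slower velocity $c_2$ is automatically a supersolution of the equation written with the faster velocity $c_1$. Indeed, changing the velocity from $c_2$ to $c_1$ adds the non-negative terms $(c_1-c_2)\phi_2'$ and $(c_1-c_2)\partial_x\psi_2$ to the two interior reaction--diffusion equations, thanks to Proposition \ref{mono}. The boundary conditions on $y=0$ and $y=-L$, as well as the asymptotic limits at $x\to\pm\infty$, do not involve the velocity and are therefore preserved as equalities. Hence $(\phi_2,\psi_2)$ is a supersolution at speed $c_1$, and Proposition \ref{unisol} applied to the solution $(c_1,\phi_1,\psi_1)$ gives some $r\in\mathbb R$ with $\phi_2(\cdot)=\phi_1(\cdot+r)$ and $\psi_2(\cdot,\cdot)=\psi_1(\cdot+r,\cdot)$. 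Substituting this identity back into the equations satisfied by $(\phi_2,\psi_2)$ at speed $c_2$ and comparing with those satisfied by $(\phi_1,\psi_1)$ at speed $c_1$ yields $(c_1-c_2)\phi_2'\equiv 0$ and $(c_1-c_2)\partial_x\psi_2\equiv 0$, in direct contradiction with Proposition \ref{mono}. The scalar case \eqref{frontswentzell} is handled identically, using only the $\psi$ equation.

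There is no genuine obstacle here: the entire content is packaged in Propositions \ref{mono} and \ref{unisol}, whose proofs did the real work via the sliding method. The only point that deserves a careful check is the verification that \emph{all} supersolution inequalities (the two bulk equations, the Robin-type exchange condition on $y=0$, the Neumann condition on $y=-L$, and the asymptotic limits) are simultaneously satisfied by $(\phi_2,\psi_2)$ at the shifted velocity $c_1$; the monotonicity $\phi_2',\partial_x\psi_2>0$ is precisely what turns strict equality at speed $c_2$ into the required inequality at speed $c_1$.
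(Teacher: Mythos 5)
Your overall strategy is the same as the paper's: derive both claims from the rigidity result Proposition~\ref{unisol} together with the strict monotonicity from Proposition~\ref{mono}, viewing a solution at one velocity as a strict super- (or, in the paper, sub-) solution at the other. Part~(2) and the flow of part~(1) are fine. The paper happens to reverse the roles (it views the \emph{faster} solution as a strict subsolution at the \emph{slower} speed), but this is cosmetic, since Proposition~\ref{unisol} applies equally to sub- and supersolutions.

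There is, however, a genuine error in your treatment of the boundary condition for \eqref{frontswentzell}. You assert that ``the boundary conditions on $y=0$ and $y=-L$, as well as the asymptotic limits at $x\to\pm\infty$, do not involve the velocity and are therefore preserved as equalities,'' and then say the Wentzell case is handled identically. But the Wentzell boundary condition
\[
d\partial_y\psi = s\Bigl(\tfrac{D}{\mu}\partial_{xx}\psi - \tfrac{c}{\mu}\partial_x\psi\Bigr)
\]
\emph{does} contain the velocity $c$, so your claim is false there and the step is not ``identical.'' The argument can be repaired — since $\partial_x\psi_2 > 0$ and $c_1 > c_2$, one has
\[
d\partial_y\psi_2 = s\Bigl(\tfrac{D}{\mu}\partial_{xx}\psi_2 - \tfrac{c_2}{\mu}\partial_x\psi_2\Bigr) > s\Bigl(\tfrac{D}{\mu}\partial_{xx}\psi_2 - \tfrac{c_1}{\mu}\partial_x\psi_2\Bigr),
\]
so $\psi_2$ is still a strict supersolution of the Wentzell condition at speed $c_1$ — but this requires exactly the kind of separate check that the paper spells out and that you declared unnecessary. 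In a continuation argument where the Wentzell problem is the central object, skipping the verification that the velocity-dependent boundary operator also has the right sign is not a harmless omission; as written it is a gap.

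One further small remark: your closing substitution step (``comparing... yields $(c_1-c_2)\phi_2'\equiv 0$'') is correct but slightly roundabout. The paper's phrasing is cleaner: once you know the inequalities are \emph{strict}, $(\phi_2,\psi_2)$ cannot be a translate of the solution at speed $c_1$ (a translate satisfies the equations with equality), contradicting Proposition~\ref{unisol} directly.
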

\begin{proof}\
\begin{enumerate}
\item Call $(c,\phi,\psi)$ and $(\overline c,\underline{\phi}, \underline{\psi})$ two solutions such that $\overline c > c$. Observe that thanks to monotonicity :
\begin{alignat*}{1}
&-d\Delta\underline\psi + c\partial_x\underline\psi = f(\underline\psi) + (c-\overline c)\partial_x\underline\psi < f(\underline\psi) \\
&-\underline\phi'' + c\underline\phi' + \mu\underline\phi = \underline\psi (c-\overline c)\underline\phi' < \underline\psi
\end{alignat*}
so that $(\underline{\phi}, \underline{\psi})$ is a subsolution of equation \eqref{Seps} with $c$ and is not a solution, which is impossible thanks to Proposition \ref{unisol}. The case of equations \eqref{frontswentzell} is treated in a similar way, just observe that 
$$d\partial_y\underline\psi = s\left(\frac{D}{\mu}\partial_{xx}\underline\psi- \frac{\overline c}{\mu}\partial_x\underline\psi\right) < s\left(\frac{D}{\mu}\partial_{xx}\underline\psi- \frac{c}{\mu}\partial_x\underline\psi\right)$$

\item Apply Proposition \ref{unisol}, knowing that a solution is also a subsolution.
\end{enumerate}
\end{proof}

\subsection{Uniform bounds for $c$}
\label{c_bounds}

Our continuation method will need compactness on $c > 0$ if we want to extract a solution from a sequence of solutions. Getting an upper bound will depend on finding supersolutions of \eqref{Seps} or \eqref{frontswentzell}. Then a lower bound will follow easily via an argument of \cite{BLL90}.

\begin{prop}
\label{sectioncmax}
There exists $c_{max} > 0$ such that any solution $(c_\varepsilon,\phi_\varepsilon,\psi_\varepsilon)$ of \eqref{Seps} satisfies $$c_\varepsilon < c_{max}$$ The same holds for \eqref{frontswentzell}.
\end{prop}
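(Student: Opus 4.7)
The plan is to follow the hint from the remark preceding the statement: construct an explicit super-solution whose existence at a given velocity $c$ forces $c$ itself to be large, and then rule out faster solutions by sliding, exactly as was done in Proposition \ref{mono}.

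Concretely I try $\bar\psi(x,y) := \min(1, e^{\lambda x})$, independent of $y$, together with $\mu\bar\phi(x) := \bar\psi(x,0)$ for the system \eqref{Seps}. Writing $M := \sup_{v \in (0,1]} f(v)/v$ (finite because $f$ is smooth with $f(0) = 0$), the bulk super-solution inequality on the exponential branch reduces to $c\lambda - d\lambda^2 \geq M$, while the Wentzell boundary at $y = 0$ for \eqref{frontswentzell} and the $\bar\phi$-equation for \eqref{Seps} both reduce to $c \geq D\lambda$. On the flat branch $\bar\psi \equiv 1$, $\mu\bar\phi \equiv 1/\mu$ all inequalities are trivial since $f(1) = 0$. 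At the kink $x_0 = 0$ the jump $\partial_x\bar\psi(0^-) - \partial_x\bar\psi(0^+) = \lambda > 0$ is favourable, so $\bar\psi$ remains a super-solution in the distributional sense (equivalently, as a minimum of two smooth super-solutions). Choosing $\lambda = 1$, any $c \geq c_{max} := \max(D, d + M)$ suffices.

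To conclude, suppose by contradiction that a solution $(c, \phi, \psi)$ of \eqref{Seps} exists with $c \geq c_{max}$. By the monotonicity of Proposition \ref{mono},
$$-d\Delta\psi + c_{max}\partial_x\psi = f(\psi) + (c_{max}-c)\partial_x\psi \leq f(\psi),$$
and analogously $-D\phi'' + c_{max}\phi' \leq (\psi - \mu\phi)/\varepsilon$, so $(\phi, \psi)$ is a sub-solution of \eqref{Seps} at velocity $c_{max}$. Now mimic the sliding argument of Proposition \ref{mono}: by Lemma \ref{fonda} I can translate $(\bar\phi, \bar\psi)$ far to the right so that it sits strictly above $(\phi, \psi)$, then decrease $r$ to the first contact $r^*$ (finite, since letting $r \to -\infty$ would force $\phi \leq 0$, contradicting Proposition \ref{bounds}). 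At $r^*$ the strong maximum principle and Hopf's lemma force the translated super-solution to coincide with the sub-solution on all of $\Omega_L$; but $\psi$ is smooth by interior elliptic regularity while $\bar\psi$ has a genuine kink, a contradiction. The same argument, dropping the $\phi$-component, works for \eqref{frontswentzell}.

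The main obstacle will be handling the distributional kink of $\bar\psi$ when invoking the strong maximum principle through the contact point. This is overcome either by working separately on the two half-strips $\{x < x_0 - r^*\}$ and $\{x > x_0 - r^*\}$ and exploiting the favourable sign of the jump in $\partial_x\bar\psi$, or by a preliminary mollification of $\bar\psi$ around the kink, which costs only an arbitrarily small increase of $c_{max}$.
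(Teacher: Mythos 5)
Your proof is correct and follows essentially the same strategy as the paper: build an explicit exponential super-solution and deduce an upper bound on $c$ by sliding/comparison. The paper uses the unbounded smooth profile $(e^{rx},\mu e^{rx})$ and optimises $r$ to get the sharper constant \eqref{cmax}, deriving the contradiction implicitly from Proposition~\ref{unisol}, while you use the truncated $\min(1,e^{\lambda x})$ with $\lambda=1$; note that your kink is in fact entirely harmless, since $\psi<1$ (Proposition~\ref{bounds}) forces any contact point with the translated super-solution to lie strictly on its exponential branch.
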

\begin{proof}
Observe that if 
\begin{equation*}
\begin{cases}
-Dr^2 + cr \geq 0 \\
-dr^2 + cr \geq \text{Lip} f
\end{cases}
\end{equation*}
Then $(e^{rx},\mu e^{rx})$ is a supersolution of \eqref{Seps} which is not a solution. The first inequation gives $r = \alpha c/D$ with $\alpha \in[0,1]$ and the best choice for minimising $c$ in the second one is $\alpha = D/(2d)$ or $\alpha = D$ depending on $D \lessgtr 2d$. More precisely,

\begin{equation}
\label{cmax}
c_{max} =
\begin{cases}
\displaystyle{2\sqrt{d\text{Lip} f}} \text{ if }D\leq 2d \\
\displaystyle{\sqrt{\frac{D^2}{D-d}\text{Lip} f}} \text{ if }D\geq 2d 
\end{cases}
\end{equation}
The exact same computation holds for \eqref{frontswentzell}.
\end{proof}

\begin{rmq}
Note that at $s=0$, $D$ does not appear in the equation so the lowest $c_{max}$ is valid but exhibits a discontinuity as soon as $s > 0$ (if $D > 2d$). Of course, since this is only an upper bound it is not a problem. Actually, this is just technical : if we had done the continuation from Neumann to oblique and from oblique to Wentzell in two steps, this discontinuity would not be since the comparison would occur between $sD \lessgtr 2d$.
\end{rmq}

\begin{prop} 
\label{cmin}
There exists $c_{min} > 0$ such that any solution of \eqref{Seps} satisfies $$c_\varepsilon \geq c_{min} > 0$$ The same holds for equation \eqref{frontswentzell}.
\end{prop}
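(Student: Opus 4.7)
The plan is to apply the Berestycki--Larrouturou--Lions energy trick: multiply the equation for $\psi$ by $\partial_x\psi$ and integrate over $\Omega_L$, and (for the full system case) also multiply the equation for $\phi$ by $\mu\phi'$ and sum. Integration by parts in $x$ kills the $\partial_{xx}\psi\cdot\partial_x\psi$ contribution thanks to the uniform limits at $x=\pm\infty$, and integration by parts in $y$ leaves only a boundary term at $y=0$ (the Neumann condition at $y=-L$ makes the other one disappear). The right-hand side is immediate: $\int_{\Omega_L} f(\psi)\partial_x\psi = L\int_0^1 f(u)\,du =: LF$, which is strictly positive by Assumption~\ref{asf}.

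Substituting the respective boundary conditions, and using $\int_{\mathbb R}\partial_{xx}\psi(\cdot,0)\,\partial_x\psi(\cdot,0)\,dx=0$, I expect the identity
$$c\left(\int_{\Omega_L}(\partial_x\psi)^2 + \frac{s}{\mu}\int_{\mathbb R}\bigl(\partial_x\psi(x,0)\bigr)^2 dx\right) = LF$$
for \eqref{frontswentzell}. For \eqref{Seps}, after adding the $\phi$-identity, the mixed boundary contributions should combine into $\frac{1}{\varepsilon}\int_{\mathbb R}(\psi(x,0)-\mu\phi)(\partial_x\psi(x,0)-\mu\phi')\,dx$; replacing $\psi(x,0)-\mu\phi = -\varepsilon d\,\partial_y\psi(x,0)$ and its $x$-derivative from the exchange boundary condition, this reduces to $\frac{\varepsilon d^2}{2}\int_{\mathbb R}\partial_x\bigl(\partial_y\psi(x,0)\bigr)^2 dx$, which vanishes since $\partial_y\psi(x,0)\to 0$ at $x=\pm\infty$ (itself read off the boundary condition using the limits of $\mu\phi-\psi$). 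Hence I arrive at
$$c\left(\int_{\Omega_L}(\partial_x\psi)^2 + \mu\int_{\mathbb R}(\phi')^2\right) = LF.$$

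To conclude, I bound the denominators above uniformly. Monotonicity (Proposition~\ref{mono}) and the limit conditions yield $\int_{\mathbb R}\partial_x\psi(x,y)\,dx=1$ for every $y\in[-L,0]$ and $\int_{\mathbb R}\phi'\,dx=1/\mu$. Combined with interior and boundary Schauder estimates — available thanks to $|\psi|\leq 1$, $|\mu\phi|\leq 1$ (Proposition~\ref{bounds}) and $c\leq c_{max}$ (Proposition~\ref{sectioncmax}) — I get $\|\partial_x\psi\|_\infty + \|\phi'\|_\infty \leq M$, so that $\int(\partial_x\psi)^2\leq ML$ and $\mu\int(\phi')^2\leq M$; the Wentzell case is similar. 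Since $LF>0$, this yields $c\geq c_{min} := LF/(ML+M)>0$.

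The main obstacle is the cancellation of the mixed boundary term in the system case, which rests on the exchange relation $\mu\phi-\psi(x,0)=\varepsilon d\,\partial_y\psi(x,0)$ together with enough regularity of $\partial_y\psi$ at $y=0$ to make sense of $\partial_x\partial_y\psi(\cdot,0)$ and of its vanishing limits. A secondary subtlety is that the Schauder estimate for \eqref{frontswentzell} requires the higher $\mathcal C^{3,\alpha}$ regularity of solutions advertised in Theorem~\ref{w} and established elsewhere in the paper.
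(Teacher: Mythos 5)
Your argument is correct in substance but takes a genuinely different route from the paper. You apply the Berestycki--Larrouturou--Lions energy identity: multiplying by $\partial_x\psi$ (and $\mu\phi'$) and exploiting the cancellation of the cross-boundary terms gives the clean relation $c\bigl(\int_{\Omega_L}(\partial_x\psi)^2 + \mu\int_{\mathbb R}(\phi')^2\bigr) = L\int_0^1 f$ (and its Wentzell analogue), and you then bound the energy from above. Your cancellation in the system case is correct: the exchange condition turns $\frac{1}{\varepsilon}\int(\mu\phi-\psi)(\mu\phi'-\partial_x\psi)\,dx$ into $\frac{\varepsilon d^2}{2}\int\partial_x\bigl[(\partial_y\psi(\cdot,0))^2\bigr]\,dx$, which vanishes because $\partial_y\psi(x,0)=(\mu\phi-\psi(x,0))/(d\varepsilon)\to 0$ at $\pm\infty$. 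The paper instead integrates the equation against the constant $1$, not against $\partial_x\psi$: it derives the mass-balance formula $c=\frac{1}{L+1/\mu}\int_{\Omega_L}f(\psi)$ and then bounds $\int f(\psi)$ from \emph{below} by isolating, via a normalisation $\psi(0,-L/2)=(1+\theta)/2$ and an interior $L^2$ elliptic estimate on a small ball centred at $(0,-L/2)$, a fixed-size region where $f(\psi)$ is bounded away from zero. The trade-offs are real: the paper's route uses only interior $L^2$ estimates away from both boundaries, needs neither monotonicity nor any up-to-the-boundary regularity, and produces a $c_{min}$ with explicit dependence only on $d,\mu,L,c_{max},\sup f,\mathrm{Lip}\,f$. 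Your route needs $\partial_x\psi\geq 0$ (Proposition~\ref{mono}), a uniform bound on $\|\partial_x\psi\|_\infty$ and $\|\phi'\|_\infty$, and enough regularity at $y=0$ to make sense of $\partial_{xy}\psi(\cdot,0)$ — all of which the paper does establish (Propositions~\ref{schauderwentzell}, \ref{schaudersystem}), but only in Sections~3--4, so your proof leans on estimates that are logically independent of $c_{min}$ yet are presented later. Moreover, near $s=0$ the Wentzell boundary estimate degenerates in the form the paper invokes it (the paper works away from $s=0$ when proving Proposition~\ref{schauderwentzell}), so your constant $M$ would require a separate argument there (e.g.\ falling back to Neumann estimates as $s\to 0$). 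None of this invalidates your proof — it just makes it heavier than the paper's and less manifestly uniform in the continuation parameters.
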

\begin{proof}
In this proof we forget about the $\varepsilon$ for the sake of notations. We integrate the equation for $\psi$ on $\Omega_{L,M}:=[-M,M]\times[-L,0]$ using integration by parts. For the first term, we have 

\begin{equation*}
\begin{split}
\int_{\Omega_{L,M}} -d\Delta\psi &= \int_{\partial\Omega_{L,M}} -d\partial_\nu\psi \\
& = \int_{[-L,0]} d\partial_x\psi(-M,y)dy - \int_{[-L,0]} d\partial_x\psi(M,y)dy + \int_{-M}^M -d\partial_y\psi(x,0)dx \\
& = \int_{[-L,0]} d\partial_x\psi(-M,y)dy - \int_{[-L,0]} d\partial_x\psi(M,y)dy + \int_{-M}^M (\psi(x,0)-\mu\phi(x))dx
\end{split}
\end{equation*}
Using elliptic estimates and dominated convergence, we see that the first two terms go to zero as $M\to\infty$, which gives
$$ \int_{\Omega_{L}} -d\Delta\psi = \int_{\mathbb R} (\psi(x,0)-\mu\phi(x)) dx = \int_{\mathbb R} (-D\phi''+ c\phi') dx = \frac{c}{\mu}$$
thanks to elliptic estimates on $\psi$.

For the second term, we have 
$$\int_{\Omega_{L,M}} c\partial_x\psi = \int_{[-L,0]} c\psi(M,y)dy - \int_{[-L,0]}c\psi(-M,y)dy \to cL$$ by dominated convergence. We thus have 
\begin{equation}
\label{formulec}
c=\frac{1}{L+1/\mu} \int_{\Omega_L} f(\psi)
\end{equation}
Now, any solution satisfies $-d\Delta\psi + c\partial_x\psi = f(\psi)$ in $\Omega_L$ with $c$ and $f(\psi)$ bounded independently of $c$ by the constant $M_0 = \max(d, c_{max}, \sup f)$. Thus on the ball $B$ of centre $(0,-L/2)$ and radius $L/4$, standard $L^2$ elliptic estimates and the Sobolev embedding give for any $0 < \beta < 1$

$$ |\psi|_{C^{\beta}(B)} \leq C_1\left( |\psi|_{L^2(2B)} + |f(\psi)|_{L^2(2B)} \right) \leq C_1|2B|(1 + \sup f^2) \leq C_2$$
with $C_2$ independent of $\varepsilon$ and where $2B$ denotes $B$ with doubled radius, and $|2B|$ its measure. We just proved that all solutions share a modulus of continuity independent of $\varepsilon$ on the ball $B$. Since $f$ is Lipschitz, the same holds for $f(\psi)$.

Now normalise the solutions by translation so that $$\psi\left( 0,\frac{-L}{2}\right)  = \frac{1+\theta}{2}$$ The previous estimate enables us to choose a radius $r_0 > 0$ small enough that depends only on $C_2$ and $\text{Lip} f$ such that $f(\psi) \geq \frac{1}{2}f( \frac{1+\theta}{2})$ on the ball $r_0B$. This implies the lower bound $$\int_{\Omega_L} f(\psi) \geq |r_0B|\dfrac{1}{2}f\left( \dfrac{1+\theta}{2}\right)  > 0$$ that gives the existence of
$$c_{min} = \frac{|r_0B|}{2(L+1/\mu)} f\left(\dfrac{1+\theta}{2}\right)$$
that depends only on $d, \mu, L, c_{max}, \sup f, \text{Lip} f$.

For equation \eqref{frontswentzell}, the exact same proof holds since \eqref{formulec} is replaced by $$c=\frac{1}{L+s/\mu}\int_{\Omega_L} f(\psi) \geq \frac{1}{L+1/\mu}\int_{\Omega_L} f(\psi) $$
for $s\in[0,1]$.
\end{proof}

\section{From Neumann to Wentzell}
\label{neumanntowentzell}
Set $$P_{W} = \{s \in [0,1] \quad | \quad (W)_s\text{ has a solution}\}$$ The main goal of this section is to prove that $P_{W}$ is open and closed in $[0,1]$, as in \cite{BM09}. We will proceed as follows :
\begin{itemize}
\item We already know that $0\in P_{W}$ so that $P_W \neq \varnothing$.
\item In Section \ref{pwclosed} we prove that $P_{W}$ is closed, using the bounds on $c$ from Section \ref{firstprop} and a regularity result up to the boundary for \eqref{Seps} or \eqref{frontswentzell}. 

We emphasise on a small but interesting technical difficulty : in the context of \eqref{frontswentzell}, no standard $L^p$ estimates up to the boundary appear to be in the literature. As a consequence, we had to use a weak Harnack inequality up to the Wentzell boundary to prove the Hölder regularity of $f(\psi)$, which is needed to use the Schauder estimates of \cite{LT91}.

\item In Section \ref{pwopen} we prove that $P_W$ is open, by perturbing \eqref{frontswentzell} for $s$ close to some $s_0 \in P_W$ in a weighted space where we can apply the implicit function theorem.
\end{itemize}
Together with the uniqueness properties of Section \ref{firstprop}, this will prove Theorem \ref{w}.

\subsection{$P_{W}$ is closed}
\label{pwclosed}
In this section we consider a sequence $(s^n) \subset P_{W}$ that converges to $s^\infty \in ]0,1]$ and we want to show that $(W_{s^\infty})$ has a solution thanks to the compactness results we already obtained. Denote $(c_n,\psi_n)$ a solution of $(W_{s^n})$. Throughout all this section we break the translation invariance by making the normalisation
\begin{equation}
\label{normalisation}
 \max_{x\leq 0} \psi_n = \theta
\end{equation}
We also drop a finite number of terms of the sequence $(s_n)$ so that for all $n \geq 0$, $s_n > \frac{s_\infty}{2} > 0$, which will be needed to ensure the uniform ellipticity of the boundary operator in \eqref{frontswentzell} so that we can use the elliptic estimates up to the Wentzell boundary.

By Section \ref{c_bounds} we can extract from $c^n$ some subsequence still denoted $c^n$ that satisfies

\begin{equation}
\label{cnconverge}
\lim_{n\to +\infty} c^n = c^\infty \in [c_{min},c_{max}]
\end{equation}

We now derive global Schauder estimates for \eqref{frontswentzell} from the standard local ones of \cite{LT91}. We describe the argument exhaustively for once because we will refer to it later for the more complicated case of \eqref{Seps}. We chose deliberately to use that $|\psi_s| \leq 1$ only at the end to give the inequality in its full generality, since the proof will serve later purposes.

\begin{prop}
\label{schauderwentzell}
There exists $\alpha > 0$ and a constant $C_{Sch}=C(D,d,c_{max},\text{Lip} f,L,\mu)$ such that for all $n \geq 0$
\begin{equation}
\label{estiw}
|\psi_n|_{\mathcal C^{2,\alpha}(\Omega_L)} \leq C_{Sch}\left(|\psi_n|_{L^\infty(\Omega_L)} \right) \leq C_{Sch}
\end{equation}

\end{prop}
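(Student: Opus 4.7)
The plan is to reduce the global estimate to a local one by exploiting the $x$-translation invariance of \eqref{frontswentzell}, then piece together interior, Neumann-boundary, and Wentzell-boundary Schauder estimates on a fixed bounded cylinder. Fix an overlapping family of cylinders $Q_k = ]k-1,k+2[\, \times\, ]-L,0[$ with inner cylinders $Q_k' = ]k,k+1[\, \times\, ]-L,0[$, for $k\in\mathbb Z$. Since all the coefficients (and the boundary operator) are independent of $x$, the constant produced by any local $\mathcal C^{2,\alpha}$ estimate on $Q_k'$ in terms of the $\mathcal C^\alpha$-norm of $\psi_n$ on $Q_k$ is the same for every $k$. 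Hence it suffices to prove a single local $\mathcal C^{2,\alpha}(Q_0')$ estimate controlled by $|\psi_n|_{L^\infty(\Omega_L)}$.

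The engine is a two-step bootstrap. First, I would establish a uniform Hölder bound
$$|\psi_n|_{\mathcal C^\alpha(\Omega_L)} \leq C\bigl(|\psi_n|_{L^\infty(\Omega_L)}\bigr),$$
for some $\alpha\in(0,1)$, by a De Giorgi / weak Harnack argument. In the interior this is the classical Krylov-Safonov or De Giorgi result applied to $-d\Delta\psi_n + c_n\partial_x\psi_n + k_n\psi_n = 0$ with $k_n := -f(\psi_n)/\psi_n \in L^\infty$. Near $y=-L$ the Neumann condition allows a straightforward reflection, so standard boundary Hölder estimates apply. The delicate part, flagged by the author, is the Wentzell boundary $y=0$: because $s_n \geq s_\infty/2 > 0$, the boundary operator $d\partial_y - s_n D\mu^{-1}\partial_{xx} + s_n c_n\mu^{-1}\partial_x$ is uniformly elliptic tangentially, and one gets a weak Harnack inequality up to this boundary, from which Hölder continuity follows. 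This is the main obstacle, and it is exactly where $s_n$ being bounded away from $0$ is used, together with the $c_n$-bounds from Section~\ref{c_bounds}.

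Once the uniform $\mathcal C^\alpha$ bound is in hand, the right-hand side $f(\psi_n)$ is in $\mathcal C^\alpha$ with a norm controlled by $\mathrm{Lip}\,f$ and $|\psi_n|_{\mathcal C^\alpha}$. Now I would apply Schauder estimates piece by piece on $Q_0$:
\begin{itemize}
\item Away from $\partial\Omega_L$, use the classical interior Schauder estimates for $-d\Delta + c_n\partial_x$.
\item Near $y=-L$, use the standard boundary Schauder estimates for the oblique (here, Neumann) condition.
\item Near $y=0$, use the Wentzell Schauder estimates of \cite{LT91}, whose hypotheses are met thanks to the uniform tangential ellipticity granted by $s_n \geq s_\infty/2$ and the uniform bound on $c_n \leq c_{max}$.
\end{itemize}
A finite covering of $\overline{Q_0'}$ by such interior, lower-boundary, and upper-boundary neighbourhoods, combined with the $\mathcal C^\alpha$ control of $f(\psi_n)$, yields
$$|\psi_n|_{\mathcal C^{2,\alpha}(Q_0')} \leq C_{Sch}\bigl(|\psi_n|_{L^\infty(\Omega_L)}\bigr),$$
with $C_{Sch}$ depending only on $D,d,c_{max},\mathrm{Lip}\,f,L,\mu$ (and the lower bound $s_\infty/2$ on $s_n$). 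Translation invariance then promotes this to the global bound~\eqref{estiw}; the final inequality is just $|\psi_n|_\infty \leq 1$ from Proposition~\ref{bounds}. The proof for a general solution of $(W)_s$ with $s\geq s_\infty/2$ is identical, which is what will later allow passing to the limit $n\to\infty$.
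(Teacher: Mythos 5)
Your proposal follows the paper's argument essentially step for step: bound $|\psi_n|_{\mathcal C^\alpha}$ via a weak Harnack / Krylov--Safonov type estimate valid up to the Wentzell boundary (this is the paper's citation of Luo's results, needed precisely because no $L^p$ theory up to a Wentzell boundary is available), deduce a uniform $\mathcal C^\alpha$ bound on the data $f(\psi_n)$, then feed it into the Schauder estimates of Lu and Taira near $y=0$, the classical interior estimates inside, and the Neumann estimates near $y=-L$, and globalise using translation invariance and $|\psi_n|_\infty\le 1$. This is the same decomposition and the same key lemma as in the paper, so the proposal is correct.
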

\begin{proof}
We only prove a local estimate near $y=0$, the rest of the strip being treated similarly but with classical interior Schauder estimates or up to the Neumann boundary (see \cite{GT}, Theorem 6.29). 

Schauder estimates up to the Wentzell boundary are already proved in \cite{LT91}, but of course they need a bound on the $\mathcal C^\alpha$ norm of the data $f(\psi)$ (or on the bounded coefficient $-f(\psi)/\psi$ after rewriting the equation). Usually, this not a problem and, for example, can be derived from $W^{2,p}$ estimates up to the boundary.

Nonetheless, no such $L^p$ estimates appear to be in the literature concerning Wentzell boundary conditions. We overcome this technical difficulty by using directly a $\mathcal C^\alpha$ (for some small $\alpha > 0$) estimate up to the boundary (see \cite{Luo91},~Theorem~2) which relies on a weak Harnack inequality up to the boundary (see \cite{Luo93}) : in other words, the Krylov-Safonov inequality of \cite{KS} is valid up to a Wentzell boundary.

Call $B_- \subset \Omega_L$ (resp. $2B_-$) some half-ball of centre $(x,0)$ and radius $\varepsilon > 0$ (resp. $2\varepsilon$) small. By the references above there exists $\alpha > 0$ and some $C_\alpha$ depending only on $c_{max}, d, D, \mu, \varepsilon$ such that
$$|\psi_n|_{\mathcal C^\alpha(B_-)} \leq C_\alpha \text{Lip} f |\psi_n|_{L^\infty(2B_-)}   $$ 
since $f$ is Lipschitz and $f(0)=0$. This yields $$|f(\psi_n)|_{\mathcal C^\alpha(B_-)} \leq C_\alpha \left( \text{Lip} f\right) ^2 |\psi_n|_{L^\infty(2B_-)}$$ and then by plugging this in the Schauder estimates up to the Wentzell boundary (\cite{LT91}, Theorem 1.5) :
$$|\psi_n|_{\mathcal C^{2,\alpha}(B_-)} \leq C_W\left( |\psi_n|_{L^\infty(2B_-)}\right)$$
for some $C_W = C(d,D,c_{max},\mu, \text{Lip} f)$. 

To obtain the global estimate, just use the global $L^\infty$ bound and observe that the above estimate does not depend on the position of $B_-$.

\end{proof}

\begin{rmq}
Of course we can now iterate the Schauder estimate for any $\mathcal C^{k,\alpha}$ provided enough regularity on $f$. Namely, if $f$ has $k$ Lipschitz derivatives, $\psi_n$ is uniformly in $\mathcal C^{k+2,\alpha}$ for every $0 < \alpha < 1$. 
\end{rmq}
%\begin{alignat*}{1}
%|\psi^n|_{\mathcal C^{2,\alpha}(\Omega_L)} \leq C\left(|\psi^n|_{L^\infty(\Omega_L)} + |\mu\phi^n|_{\mathcal C^{1,\alpha}(\Omega_L)}\right)
%\end{alignat*}

Using \eqref{estiw} with Ascoli's theorem and the process of diagonal extraction for every $[-N,N]\times [-L,0]$, we get a subsequence still denoted $\psi_n$ that converges in $\mathcal C^2_{loc}(\overline{\Omega_L})$ to a function $\psi_\infty \in \mathcal C^2(\overline{\Omega_L})$. Remembering \eqref{cnconverge} we can pass to the limit in $(W_{s^n})$ to get that $(c_\infty, \psi_\infty)$ solves $(W_{s^\infty})$ apart from the limiting conditions. This is the aim of the following lemmas.

\begin{prop}
\label{expdecay}
 $\psi^\infty(x,\cdot)$ converges uniformly to $0$ as $x\to -\infty$.
\end{prop}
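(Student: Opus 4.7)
The plan is to build an exponential supersolution $\Phi(x,y) = \theta e^{\alpha x}$ that dominates each $\psi_n$ on the half-strip $\{x \le 0\}\times[-L,0]$ with a decay rate $\alpha > 0$ independent of $n$, and then pass to the pointwise limit. Thanks to the normalization \eqref{normalisation} and the strict monotonicity $\partial_x \psi_n > 0$ given by Proposition \ref{mono}, one has $\psi_n(x,y) < \theta$ for all $x<0$, so $f(\psi_n) \equiv 0$ there; in particular each $\psi_n$ solves the \emph{linear} Wentzell problem $-d\Delta\psi_n + c_n\partial_x\psi_n = 0$ on $\{x<0\}\times(-L,0)$.

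I would set $\alpha := c_{min}/\max(d,D) > 0$, uniform in $n$ by Proposition \ref{cmin}. A direct computation on $\Phi = \theta e^{\alpha x}$ gives $-d\Delta\Phi + c_n\partial_x\Phi = \alpha(c_n - d\alpha)\Phi \ge 0$ and
$$ d\partial_y\Phi - \frac{s^n}{\mu}\left(D\partial_{xx}\Phi - c_n\partial_x\Phi\right) = \frac{s^n\alpha}{\mu}(c_n - D\alpha)\Phi \ge 0,$$
thanks respectively to $\alpha \le c_n/d$ and $\alpha \le c_n/D$. So $\Phi$ is a supersolution of the linearized Wentzell problem uniformly in $n$.

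The next step is to establish $\psi_n \le \Phi$ on $\{x \le 0\}$ via a maximum principle on $V := \Phi - \psi_n$. Since $V(0,y) = \theta - \psi_n(0,y) \ge 0$ by the normalization, and $V \to 0$ uniformly in $y$ as $x \to -\infty$ (both summands vanish for each fixed $n$), a negative infimum of $V$ would be attained at some $(x_0,y_0)$ with $x_0 < 0$. The three cases are handled as in Lemma \ref{fonda}: for $y_0 \in (-L,0)$, the strong maximum principle applied to the supersolution $V$ forces $V$ locally constant, contradicting the boundary/limit behaviour; for $y_0 = -L$, Hopf contradicts the Neumann identity $\partial_y V(\cdot,-L) = 0$; and for $y_0 = 0$, Hopf forces $\partial_y V(x_0,0) < 0$ while the Wentzell supersolution inequality at $(x_0,0)$ gives
$$ d\partial_y V(x_0,0) \ge \frac{s^n}{\mu}\left(D\partial_{xx}V(x_0,0) - c_n\partial_x V(x_0,0)\right) \ge 0,$$
since $\partial_x V(x_0,0) = 0$ and $\partial_{xx}V(x_0,0) \ge 0$ at a minimum. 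Passing $n \to \infty$ in the pointwise inequality $\psi_n(x,y) \le \theta e^{\alpha x}$ (valid on $\{x \le 0\}$) then yields the same bound for $\psi_\infty$, which is the claimed uniform decay.

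The main subtlety I expect is precisely the Wentzell case $y_0 = 0$: the boundary condition mixes a first-order normal derivative with second-order tangential terms, but at a putative minimum the tangential contribution has the favourable sign, which is exactly what lets Hopf close the argument and is the reason the Wentzell boundary inherits a maximum principle in this setting. Note also that the choice of the half-strip $\{x\le 0\}$ (rather than all of $\Omega_L$) is essential: it is only there that $f(\psi_n)$ vanishes identically and the comparison with the pure exponential $\Phi$ can be performed.
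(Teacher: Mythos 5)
Your proof is correct and follows essentially the same approach as the paper: compare each $\psi_n$ on $\{x\le 0\}$ (where $f(\psi_n)\equiv 0$ by the normalisation and monotonicity) against the exponential supersolution $\theta e^{\alpha x}$ with $\alpha = c_{min}/\max(d,D)$, verify the interior and Wentzell inequalities, close with the strong maximum principle and Hopf's lemma, and pass to the limit $n\to\infty$. The paper uses the $n$-dependent rate $r_n = c_n/\max(d,D)$ and only then bounds it below by $c_{min}/\max(d,D)$, but this is a cosmetic difference; your choice of a single uniform rate works just as well, and your more explicit treatment of the three boundary cases (interior, Neumann, Wentzell) is consistent with how the paper handles them in Lemma \ref{fonda}.
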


\begin{proof}
This relies on a comparison with the exponential supersolution already computed in Proposition \ref{sectioncmax}. Observe that thanks to \eqref{normalisation}, any solution of \eqref{frontswentzell}+\eqref{normalisation} satisfies $f(\psi_s) \equiv 0$ on $x\leq 0$. As a consequence $$\overline p_s:= \theta e^{r_sx}$$ where $$r_s = \frac{c_s}{\max(d,D)} \geq \frac{c_{min}}{\max(d,D)} =: r$$ is a supersolution of \eqref{frontswentzell} on $x\leq 0$. 

Since $\overline p_s - \psi_s$ is non-negative on $x=0$, goes uniformly to $0$ as $x\to -\infty$, satisfies a Neumann boundary condition on $y=-L$, a Wentzell boundary condition on $y=0$ and $-d\Delta  u + c\partial_x u \geq 0$ inside $x < 0$, the strong maximum principle and Hopf's lemma give for all $x\leq 0$ :
\begin{equation}
 \psi_s (x,y) \leq \theta e^{r_s x} \leq \theta e^{rx} \label{compgauche}
\end{equation}
The result is obtained by taking $s=s^n$ and making $n \to +\infty$ in the above inequality.  
\end{proof}

The right limit condition is obtained by simple computations already done in \cite{BLL90} in the Neumann case, we adapt them here.
\begin{prop}
\label{rightlimit}
 $\psi^\infty(x,\cdot)$ converges uniformly to $1$ as $x\to +\infty$.
\end{prop}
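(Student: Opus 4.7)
The plan is to extract from $\psi_\infty$ a limit profile as $x\to+\infty$, to identify this profile through a reduced ODE, and finally to exclude the trivial limit via the lower bound on $c$.

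First, by passing Proposition \ref{mono} to the $\mathcal C^2_{loc}$ limit, $\psi_\infty$ is non-decreasing in $x$, so for each $y\in[-L,0]$ the bounded monotone limit $\ell(y):=\lim_{x\to+\infty}\psi_\infty(x,y)$ exists. Applying the uniform Schauder bound of Proposition \ref{schauderwentzell} together with Arzelà--Ascoli to the shifted sequence $\psi_\infty(\cdot+k,\cdot)$, a subsequence converges in $\mathcal C^2_{loc}(\overline{\Omega_L})$ to some $\Psi$ solving $(W)_{s^\infty}$. Monotonicity in $x$ forces $\Psi(x,y)=\ell(y)$ to be $x$-independent, and $\ell\in\mathcal C^2([-L,0])$ then satisfies
$$-d\ell''(y)=f(\ell(y)) \text{ on }(-L,0),\quad \ell'(-L)=0,\quad \ell'(0)=0,$$
the last identity coming from the degeneracy of the Wentzell condition when $\partial_x\Psi\equiv\partial_{xx}\Psi\equiv 0$.

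Integrating the ODE yields $\int_{-L}^{0}f(\ell)=0$, so $f(\ell)\equiv 0$ (since $\ell\in[0,1]$ and $f\geq 0$ there), which gives $\ell(y)\in[0,\theta]\cup\{1\}$. By continuity of $\ell$ and connectedness of $[-L,0]$, either $\ell\equiv 1$, or $\ell([-L,0])\subset[0,\theta]$; in the latter case $\ell''\equiv 0$ combined with the Neumann data force $\ell\equiv L_0$ constant. Passing the normalisation \eqref{normalisation} to the limit yields $\max_y\psi_\infty(0,y)=\theta$, and monotonicity then imposes $L_0\geq\theta$, i.e.\ $L_0=\theta$.

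The main obstacle is to rule out $\ell\equiv\theta$. I would redo directly on $\psi_\infty$ the integration-by-parts leading to \eqref{formulec}: integrating $(W)_{s^\infty}$ on $[-M,M]\times[-L,0]$ and letting $M\to+\infty$, the boundary derivatives $\partial_x\psi_\infty(\pm M,y)$ vanish uniformly in $y$---on the left by Proposition \ref{expdecay} and interior Schauder estimates, on the right by the $\mathcal C^2_{loc}$ convergence of $\psi_\infty(\cdot+k,\cdot)$ to the $x$-independent profile $\ell$. One obtains
$$c_\infty L_0\left(L+\frac{s^\infty}{\mu}\right)=\int_{\Omega_L}f(\psi_\infty).$$
If $L_0=\theta$ then $\psi_\infty\leq\theta$ throughout $\Omega_L$ by monotonicity, so $f(\psi_\infty)\equiv 0$ and $c_\infty=0$, contradicting $c_\infty\geq c_{min}>0$ from Proposition \ref{cmin}. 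Hence $\ell\equiv 1$, and Dini's theorem applied to the monotone pointwise convergence of the continuous functions $\psi_\infty(x,\cdot)$ to the continuous limit $1$ on the compact $[-L,0]$ upgrades it to uniform convergence.
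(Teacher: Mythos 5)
Your proof is correct but reaches the constancy of the right limit profile by a genuinely different mechanism than the paper. The paper establishes $\int_{\Omega_L}|\nabla\psi^\infty|^2<\infty$ (the second half of Lemma \ref{integrals}, proved by multiplying $(W_{s^\infty})$ by $\psi^\infty$ and integrating by parts) and uses this together with the $\mathcal C^1_{loc}$ convergence of translates to conclude $\nabla\beta=0$. You instead upgrade the convergence of translates to $\mathcal C^2_{loc}(\overline{\Omega_L})$ (legitimate, since the uniform Schauder bound of Proposition \ref{schauderwentzell} applies to $\psi_\infty$ and its translates), observe that the $x$-independent limit satisfies the reduced ODE $-d\ell''=f(\ell)$ with Neumann data at both $y=-L$ and $y=0$ --- the Wentzell condition collapses to Neumann once $\partial_x\Psi\equiv\partial_{xx}\Psi\equiv 0$ --- and integrate this ODE to get $f(\ell)\equiv 0$, hence $\ell$ affine and then constant. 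This bypasses the gradient-integral half of Lemma \ref{integrals} entirely and replaces it with the classical limit-profile identification; the half of Lemma \ref{integrals} you do retain ($\int f(\psi^\infty)<\infty$, or rather the convergence of $\int_{Q_M}f(\psi^\infty)$) is obtained implicitly through the same integration by parts you perform to rule out $\ell=\theta$, which coincides with the paper's final step. Two minor points: your case split after $f(\ell)\equiv 0$ is redundant (constancy already follows from $\ell''\equiv 0$ and Neumann, regardless of which connected component of $f^{-1}(0)$ contains $\ell$), and the concluding Dini argument is a small explicit addition the paper leaves implicit; both are harmless.
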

Since bounds and monotonicity pass to the $\mathcal C^2$ limit, we have $0 \leq \psi^\infty \leq 1$, as well as $\psi^\infty_x \geq 0$. As a consequence there exists $\beta(y) \leq 1$ such that $\psi^\infty(x,y) \to \beta(y)$ as $x\to +\infty$. Let us define the functions $\psi^\infty_j(x,y) = \psi^\infty(x+j,y)$ in $[0,1]\times[-L,0]$ for every integer $j$. Elliptic estimates  and Ascoli's theorem tell us that up to extraction, $\psi^\infty_j \to \delta$ in the $\mathcal C^1$ sense for a $\mathcal C^1$ function $\delta$. By uniqueness of the simple limit, $\beta = \delta \in \mathcal C^1$. So $\psi^\infty_j$ lies in a compact set of $\mathcal C^1( [0,1] \times [-L,0])$ and has a unique limit point $\beta\in \mathcal C^1$ : then it converges to it in the $\mathcal C^1$ topology.

\begin{lem}
\label{integrals}
$\int_{\Omega_L} f(\psi^\infty) < +\infty$ and $\int_{\Omega_L} |\nabla \psi^\infty|^2 < +\infty$
\end{lem}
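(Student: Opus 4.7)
The plan is to derive both bounds from a single principle: multiply the interior equation $-d\Delta\psi^\infty+c_\infty\partial_x\psi^\infty=f(\psi^\infty)$ by an appropriate test function and integrate over the truncated strip $\Omega_{L,M}:=[-M,M]\times[-L,0]$, then let $M\to\infty$. For the first bound I test against the constant $1$; for the second, against $\psi^\infty$ itself. In each case the Wentzell condition $d\partial_y\psi^\infty=s^\infty\bigl(\tfrac{D}{\mu}\partial_{xx}\psi^\infty-\tfrac{c_\infty}{\mu}\partial_x\psi^\infty\bigr)$ on $y=0$ will require an extra integration by parts in $x$, so that the boundary integral involving $\partial_{xx}\psi^\infty$ reduces to point values of $\partial_x\psi^\infty$ at $(\pm M,0)$.

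For the integral of $f(\psi^\infty)$, applying the divergence theorem expresses $\int_{\Omega_{L,M}}f(\psi^\infty)$ as a sum of: (i) the drift contribution $c_\infty\int_{-L}^0[\psi^\infty(M,y)-\psi^\infty(-M,y)]\,dy$, bounded by $c_\infty L$; (ii) the diffusion contributions on $x=\pm M$; (iii) the Wentzell contribution on $y=0$, which after integrating $\partial_{xx}\psi^\infty$ in $x$ reduces to combinations of $\partial_x\psi^\infty(\pm M,0)$ and $\psi^\infty(\pm M,0)$. For the gradient bound, testing against $\psi^\infty$ and applying Green's formula produces $d\int|\nabla\psi^\infty|^2$ on the left and $\int f(\psi^\infty)\psi^\infty\leq\int f(\psi^\infty)$ on the right, modulo boundary corrections; crucially, processing the Wentzell term yields an extra $-\tfrac{s^\infty D}{\mu}\int_{-M}^M(\partial_x\psi^\infty(x,0))^2\,dx$ on the right, carrying the \emph{favourable} sign so that it can be absorbed into the left hand side.

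The hard part will be controlling the boundary data as $M\to\infty$. On the left end this is easy: Proposition \ref{expdecay} gives $\psi^\infty=O(e^{-r|x|})$ as $x\to-\infty$, and the boundary Schauder estimates of Proposition \ref{schauderwentzell} propagate this exponential decay to $\partial_x\psi^\infty$ and $\partial_{xx}\psi^\infty$, killing every contribution at $x=-M$ in the limit. On the right end I will exploit the observation made just before the lemma: the integer shifts $\psi^\infty_j(x,y):=\psi^\infty(x+j,y)$ converge in $\mathcal C^1([0,1]\times[-L,0])$ to the $x$-independent profile $\beta(y)$, and via Proposition \ref{schauderwentzell} this can be upgraded to $\mathcal C^{2,\alpha}$ convergence. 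Since $\partial_x\beta\equiv 0$ and $\partial_{xx}\beta\equiv 0$, choosing $M=j\in\mathbb N$ forces $\partial_x\psi^\infty(j,y)$ and $\partial_{xx}\psi^\infty(j,y)$ to $0$ uniformly in $y$, while the remaining $\psi^\infty(j,y)$ terms converge to finite expressions in $\beta$. This uniform vanishing of the first and second $x$-derivatives on integer cross-sections is the essential ingredient that closes both estimates simultaneously.
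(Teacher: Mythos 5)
Your proposal is correct, but it takes a somewhat different route from the paper's at the right endpoint, and it is worth flagging the trade-off. Where you integrate over $[-M,M]\times[-L,0]$ and show that every boundary contribution actually \emph{converges} (exponential decay from Proposition~\ref{expdecay} plus local Schauder on the left; $\mathcal C^2$ convergence of the integer shifts $\psi^\infty_j$ on the right, yielding $\partial_x\psi^\infty(j,y)\to 0$ and $\partial_{xx}\psi^\infty(j,y)\to 0$ uniformly), the paper integrates over $[0,M]\times[-L,0]$ only and never establishes convergence of the right boundary term. Instead it observes that $\int_{Q_M}f(\psi^\infty)=A(M)-A(0)$ with the only potentially unbounded piece of $A(M)$ being $-d\int_{-L}^0\partial_x\psi^\infty(M,y)\,dy$, and this is $-\tfrac{d}{dM}$ of the bounded function $M\mapsto d\int_{-L}^0\psi^\infty(M,y)\,dy$, so it cannot tend to $-\infty$; the same device handles the gradient estimate via the antiderivative $-d\int_{-L}^0\tfrac12\psi^\infty(M,y)^2\,dy$. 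The paper's argument is thus more economical: it needs only the global $\mathcal C^{2,\alpha}$ bound from Proposition~\ref{schauderwentzell} (giving boundedness of $\partial_x\psi^\infty$ and of the Wentzell term) and avoids upgrading the $\mathcal C^1$ convergence of $\psi^\infty_j\to\beta$ to $\mathcal C^2$. Your upgrade is legitimate — $\psi^\infty_j$ is uniformly bounded in $\mathcal C^{2,\alpha}$, hence precompact in $\mathcal C^2$, and $\beta$ is the unique subsequential limit, which also shows $\beta\in\mathcal C^{2,\alpha}$ — but it is extra work. What your route buys in exchange is an explicit limit for the boundary integrals rather than a mere boundedness bootstrap, which some readers may find more transparent; as you note, it also makes it visible that the Wentzell term $-\tfrac{s^\infty D}{\mu}\int(\partial_x\psi^\infty)^2(x,0)\,dx$ has the favourable sign. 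One minor point worth making explicit: passing to $M=j\in\mathbb N$ is harmless because both integrands are nonnegative, so the integrals over $Q_M$ are monotone in $M$ and subsequential control along integers already determines the limit.
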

\begin{proof}
For the first integral we integrate $(W_{s^\infty})$ on $Q_M:=[0,M]\times[-L,0]$ using integration by parts. We obtain 

\begin{equation*}
\begin{split}
\int_{Q_M} f(\psi^\infty) =& \int_{-L}^0 -d\partial_x\psi^\infty(M,y)dy + \int_{-L}^0 d\partial_x\psi^\infty(0,y)dy  \\ 
& + \displaystyle\frac{s^\infty}{\mu}\left(c_\infty(\psi^\infty(M,0)- \psi^\infty(0,0)) - D(\partial_x \psi^\infty(M,0)- \partial_x\psi^\infty(0,0))\right)  \\
& + \int_{-L}^0 c^\infty\psi^\infty(M,y)dy - \int_{-L}^0 c^\infty\psi^\infty(0,y)dy
\end{split}
\end{equation*}
which can be written as $$ \int_{Q_M} f(\psi^\infty) = A(M) - A(0)$$ with $$A(m) = c^\infty\int_{-L}^0 \psi^\infty(m,y)dy + \frac{s^\infty}{\mu}\left(c_\infty \psi^\infty(m,0) - D\partial_x\psi^\infty(m,0)\right) - d\int_{-L}^0 \partial_x\psi^\infty(m,y)dy$$ Since the first two terms in $A(m)$ are bounded (thanks to $\psi_\infty \leq 1$ and elliptic estimates), $\int_{Q_M} f(\psi^\infty) \to +\infty$ would mean that $d\int_{-L}^0 \partial_x\psi^\infty(m,y)dy \to -\infty$ as $M\to+\infty$ which is impossible since it is the derivative of the bounded function $m\mapsto d\int_{-L}^0 \psi^\infty(m,y)dy$.

For the second integral, we proceed in the same manner, but integrating the equation multiplied by $\psi^\infty$ and integrating by parts, we get 
\begin{equation}
\label{gradpsiinf2}
d\int_{Q_M} |\nabla\psi^\infty|^2 = B(M) - B(0) + \int_{Q_M} f(\psi^\infty)\psi^\infty - \frac{s_\infty D}{\mu}\int_0^M (\partial_x \psi^\infty)^2(x,0) dx
\end{equation}
with \begin{alignat*}{1}
B(m) = & -\frac{c^\infty}{2}\int_{-L}^0 {\psi^\infty}(m,y)^2dy - \frac{c^\infty s^\infty}{2\mu}{\psi^\infty}(m,0)^2 + \frac{s_\infty D}{\mu} (\partial_x\psi^\infty \psi^\infty)(m,0) \\ 
& -d\int_{-L}^0 (\psi^\infty \partial_x\psi^\infty)(m,y)dy 
\end{alignat*}
The third term in \eqref{gradpsiinf2} is bounded thanks to $0 \leq \psi^\infty \leq 1$ and what we just saw. The last one is non-positive. The first two terms in $B(m)$ are bounded, the third one also by elliptic estimates, so $\int_{Q_M} |\nabla\psi^\infty|^2 \to +\infty$ would mean $d\int_{-L}^0 (\psi^\infty \partial_x\psi^\infty)(m,y)dy \to -\infty$ as $M\to\infty$, which is impossible since it is the derivative of the bounded function $m\mapsto -d\int_{-L}^0 \frac{1}{2}{\psi^\infty}(m,y)^2dy$. 
 \end{proof}

\begin{proof}[End of the proof of Lemma \ref{rightlimit}]
We now turn back to the study of the right limit. The second integral in Lemma \ref{integrals} being finite, necessarily $\nabla\beta = 0$ \footnote{or else we have uniformly $|\nabla u_j|^2 \to g$, with $g > \delta > 0$ in $[y_0-\varepsilon,y_0+\varepsilon]$ and the second integral would be greater than $\int_{[R,\infty)\times[y_0-\varepsilon,y_0+\varepsilon]} \delta/2 = +\infty$ for $R$ large enough.}. So $\beta$ is a constant. Moreover, $$0 \leq \theta \leq \text{max}_{[-L,0]} \psi^\infty(0,y) \leq \beta \leq 1$$ We also have $f(\beta)=0$ because of the finiteness of the first integral\footnote{$\int_{Q_\infty}f(\psi^\infty) = \sum_{j=0}^\infty \int_{Q_1} f(\psi^\infty_{2j})$ so $\int_{Q_1} f(\psi^\infty_{2j}) \to 0$, but this $\to \int_{-L}^0 f(\beta(y))dy$ too.} so $\beta = \theta$ or $\beta = 1$. Suppose by contradiction that $\beta=\theta$. Then $f(\psi^\infty) \equiv 0$, and integrating the equation satisfied by $\psi^\infty$ on $[-m,m]\times[0,L]$ just as above and making $m\to+\infty$ yields
$$0 = A(\infty) - A(-\infty) = c^\infty\left(L+\frac{s^\infty}{\mu}\right)\theta$$
since $\partial_x\psi^\infty(m,y)\to 0$ uniformly in $y$ as $x\to\pm\infty$. This is of course, impossible, since $c^\infty>c_{min}>0$ and $\theta > 0$. 
\end{proof}

As a conclusion, $\psi^\infty$ satisfies all the desired properties, and we have proved that $P_{W}$ is closed.

\subsection{$P_{W}$ is open}
\label{pwopen}

This part is about applying the implicit function theorem to some function
$F(s,c,\psi)$ in order to get a solution for $s > s^0$ close to a value $s^0$ of the
parameter for which we already have a solution $c^0,\psi^0$. In this section we
take $\mu=1$ without loss of generality to clarify the diagrams. 
We set 
$$\psi=\psi^0 +(s-s^0)\psi^1\text{, }c=c^0+(s-s^0)c^1$$
where $s\in [s^0, s^0+\delta]$, $\delta > 0$ small to be fixed later.
After a simple but tedious computation,  we get that the corresponding equation for $\psi^1,c^1$ is :

\begin{center}
\begin{tikzpicture}
\draw (-7,0) -- (7,0) node[pos=0.5,below] {$\mathcal W\psi^1 = -(c^0+c^1s)\partial_x\psi^0 + D\partial_{xx}\psi^0 - (s-s^0)(c^0+c^1s)\partial_x\psi^1 + (s-s^0)D\partial_{xx}\psi^1$};

\node at (0,-1.5) {$\mathcal L\psi^1 + c^1\partial_x\psi^0 =
R(s-s^0,c^1,\psi^1)$};

\draw (-7,-3) -- (7,-3) node[pos=0.5,above] {$\partial_y\psi^1 = 0$};

\end{tikzpicture}
\end{center}
where $$\mathcal W =  d\partial_y + c^0s^0\partial_x - s^0D\partial_{xx}$$
and $$\mathcal L = -d\Delta  + c^0\partial_x - f'(\psi^0)$$ and $R$
being a function that goes to $0$ as $s\to s^0$ and decays quadratically in the variables $\psi^1, c^1$ in a setting that will be defined later\footnote{$R(s-s^0,c^1,\psi^1)
= -(s-s^0)c^1\partial_x\psi^1 + (s-s^0)\frac{f''(\psi^0)}{2}(\psi^1)^2 +
(s-s^0)^2\frac{f'''(\psi^0)}{6}(\psi^1)^3 + \cdots
= (s-s^0)\mathcal O(c^1, \psi^1 )$, the $\mathcal O$ being in $\mathbb R \times \mathcal C^{1,\alpha}$ norm.}. 

We will solve the order $1$ problem, i.e. the one obtained by taking $s=s^0$	
and then we will apply the implicit function theorem in a good functional
setting to obtain the existence of a solution to the above problem for $s$ close
to $s^0$. The upper boundary condition should be seen as close to a fixed non-homogeneous Wentzell boundary condition. That is why we first need some
information about the operator $\mathcal L$ with Wentzell condition.
\begin{center}
\begin{tikzpicture}
\draw (-5,0) -- (5,0) node[pos=0.5,below] {$\mathcal Wg=0$};

\node at (0,-1.5) {$\mathcal Lg = 0$};

\draw (-5,-3) -- (5,-3) node[pos=0.5,above] {$-\partial_y g = 0$};

\end{tikzpicture}
\end{center} It is well known (see \cite{SAT76},\cite{JMR92}) that this operator is not Fredholm in the usual spaces of bounded uniformly continuous functions due to the degeneracy of $f$ in the range $[0,\theta]$. The way to circumvent this difficulty is to endow the space with a weight that sees the exponential decay of the solutions as $x\to -\infty$.

%In order for this operator to have the suitable properties to solve the
%problem, we construct a suitable Banach space as in Sattinger \cite{SAT76} and
%Roquejoffre \cite{JMR92} by endowing
%$\mathcal{C}^{2,\alpha}(\Omega_L)$ with a weighted norm
%that sees the exponential decay.  

\begin{defi}
\label{defw}
Let \begin{equation}
\label{r}
r = \frac{c_{min}}{4\max(d,D)}
\end{equation} so that $-dr^2+ c^0r \geq 0$ and $-Dr^2+c^0r \geq 0$ (the $4$ will serve later purposes, see Lemma \ref{weightedlinf}). Define $w$ to be a smooth function such that 
\begin{equation}
w(x) = 
\begin{cases}
e^{rx} \text{ for }x < 0\\
2 \text{ for } x > 1\\
\end{cases}
\end{equation}
and that is concave and increasing on $0 < x <1$. 
Define also $w_1 = 1/w$.
\end{defi}

\begin{defi} Let $$\mathcal C^\alpha_w(\Omega_L) = \{u \in \mathcal{C}^{\alpha}(\Omega_L)
 \mid w_1u \in \mathcal{C}^{\alpha}(\Omega_L)\}$$ and $$X = \mathcal{C}^{2,\alpha}_w(\Omega_L)$$ the set of $\mathcal C^2$ functions on $\Omega_L$ whose derivatives up to order two are in $\mathcal C^\alpha_w(\Omega_L)$. We endow $X$ with the norm $$|u|_X
= |w_1u|_{\mathcal{C}^{2,\alpha}}$$
\end{defi}

$X$ is clearly a Banach space, which contains $\psi^0$. Indeed, at the left of $\Omega_L$, $w_1\psi^0$ satisfies a linear homogeneous Wentzell problem. Thus, the $\mathcal C^{2,\alpha}_w$ estimate directly comes from the Schauder estimates of \cite{LT91} by the $L^\infty$ estimate for $w_1\psi^0$, which was already proved to get the left-limit condition, in Proposition \ref{expdecay}. At the right of $\Omega_L$, $w_1$ is a bounded smooth function and being $\mathcal C^{2,\alpha}_w$ here is equivalent to being $\mathcal C^{2,\alpha}$. Now we have :

%We should also have in mind the fact that the same phenomenon appears when $x\to\infty$ : solutions tend actually exponentially fast to $1$, so by elliptic estimates their derivatives to zero, and so we can integrate them on $\Omega_L$ or on $y=0$ (this will be used in the later computations and the proof of this fact is contained in the proof \ref{rightlimitcd}).

\begin{lem}
\label{lemJMR}
$\mathcal L$ has closed range and there exists $X_1 \simeq R(\mathcal L)$ a closed subspace of $X$ and $Y_2 \simeq N(\mathcal L)$ such that 
$$X=N(\mathcal L)\oplus X_1$$
$$Y=R(\mathcal L)\oplus Y_2$$
Moreover $N(\mathcal L) = N(\mathcal L^2) = \mathbb R \partial_x{\psi_0}$. Finally, denote $\mathcal L^*$ the adjoint of $\mathcal L$. Then $N(\mathcal L^*)$ is one dimensional too. Calling $e^*$ the unique generator that satisfies $$<e^*, \partial_x\psi^0> = 1$$ we get that $e^*$ is a positive measure that happens to be a smooth positive function, solving 
$$\mathcal L^*e^* = \left(-d\Delta -
c^0\partial_x - f'(\psi^0)\right)e^* = 0$$ endowed with the dual boundary conditions 
\begin{alignat*}{1}
d\partial_ye^* - c^0s^0\partial_xe^* - s^0D\partial_{xx}e^* &= 0\text{ on }y=0\\ 
\partial_ye^*&=0\text{ on }y=-L 
\end{alignat*}
Moreover $e^*$ is bounded on $x > 0$ and has at most $Ce^{-rx}$ growth as $x\to -\infty$.
\end{lem}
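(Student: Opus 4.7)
My strategy is to reduce the statement to classical Fredholm theory for elliptic operators in unbounded cylinders, via conjugation by the weight. Setting $v = w_1 u$, the similarity $\tilde{\mathcal L}v := w_1\mathcal L(wv)$ turns $\mathcal L:X\to Y$ into an elliptic operator on the unweighted $\mathcal C^{2,\alpha}(\Omega_L)$. A direct computation gives, at $x\to -\infty$ (where $w = e^{rx}$ and $f'(\psi^0)\equiv 0$ by the normalisation \eqref{normalisation}),
\[\tilde{\mathcal L}_{-\infty} = -d\Delta + (c^0 - 2dr)\partial_x + r(c^0 - dr),\]
whose zero-order coefficient $r(c^0 - dr)$ is strictly positive by the choice of $r$ in Definition \ref{defw}; at $x\to +\infty$, $w$ is constant and the limiting operator is $-d\Delta + c^0\partial_x - f'(1)$, with zero-order coefficient $-f'(1)>0$ by Assumption \ref{asf}. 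The Wentzell boundary operator along $y=0$ is uniformly elliptic in $x$ thanks to $s^0>0$. This asymptotic hyperbolicity places $\tilde{\mathcal L}$ within the framework of \cite{SAT76,JMR92}, from which $\mathcal L$ is Fredholm of index zero, and the closed range property and two direct-sum decompositions follow.

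For the kernel, differentiating the profile equation and the boundary conditions of $\psi^0$ with respect to $x$ yields $\partial_x\psi^0 \in N(\mathcal L)$, and $\partial_x\psi^0\in X$ because $\psi^0$ decays at rate $r$ at $-\infty$ by Proposition \ref{expdecay}, the Schauder estimates of \cite{LT91} transferring this decay to derivatives. To obtain $N(\mathcal L) = \mathbb R\,\partial_x\psi^0$, I use the strict positivity $\partial_x\psi^0>0$ on $\overline{\Omega_L}$ from Proposition \ref{mono}: any $u\in N(\mathcal L)$ decays exponentially at both ends (from the Fredholm framework), so $u/\partial_x\psi^0$ is bounded and $\lambda \partial_x\psi^0 - u$, with $\lambda$ the supremum, is a non-negative element of $N(\mathcal L)$ touching zero; the strong maximum principle, Hopf's lemma, and the pointwise argument at a Wentzell extremum already used in Proposition \ref{bounds} force it to vanish identically. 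Index zero then gives $\dim N(\mathcal L^*) = 1$, and the adjoint boundary conditions are produced by a formal integration by parts against a smooth test function (the terms at $x=\pm\infty$ vanish by the weighted decay). A positive generator $e^*$ is produced by approximating $\mathcal L^*$ by its Dirichlet realisation on $[-N,N]\times[-L,0]$ with the adjoint boundary conditions, applying the Krein--Rutman theorem to get a positive principal eigenfunction (with eigenvalue $\lambda_N\to 0$), and passing to the limit using Schauder estimates and the one-dimensionality of $N(\mathcal L^*)$. The normalisation $\langle e^*,\partial_x\psi^0\rangle = 1$ then fixes the sign.

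For the growth estimate, on $\{x\leq 0\}$ we have $f'(\psi^0)\equiv 0$, so $e^*$ satisfies $(-d\Delta - c^0\partial_x)e^* = 0$ there, and $Ce^{-rx}$ is a supersolution with compatible Wentzell and Neumann boundary inequalities (this is precisely the role of the factor $4$ in Definition \ref{defw}, which ensures both $-dr^2+c^0r\geq 0$ and $-Dr^2+c^0r\geq 0$ with a uniform margin); a comparison identical to Proposition \ref{expdecay} gives $e^*\leq Ce^{-rx}$ on $\{x\leq 0\}$, while boundedness on $\{x>0\}$ follows from standard interior and boundary Schauder estimates. The equality $N(\mathcal L) = N(\mathcal L^2)$ is then the Fredholm alternative: if $u\in N(\mathcal L^2)$, then $\mathcal Lu = \lambda\partial_x\psi^0$ for some $\lambda\in\mathbb R$, and pairing with $e^*$ gives
\[\lambda = \lambda\langle e^*,\partial_x\psi^0\rangle = \langle e^*,\mathcal L u\rangle = \langle \mathcal L^* e^*,u\rangle = 0,\]
integration by parts being justified by the weighted decay of $u$ and $e^*$. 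The main obstacle is transplanting the classical Fredholm machinery of \cite{SAT76,JMR92} to the Wentzell setting in a weighted Hölder topology: this requires combining those ideas with the Schauder estimates of \cite{LT91} and the weak Harnack inequality invoked in Proposition \ref{schauderwentzell}, in order to secure the compactness and the index formula in the non-standard functional setting.
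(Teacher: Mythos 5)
Your proposal tracks the paper's architecture for the Fredholm property (conjugation by the weight, positivity of the limiting zero-order coefficients at $x\to\pm\infty$) and the kernel computation (sliding against $\partial_x\psi^0$), and you honestly flag that transplanting the Fredholm machinery to the Wentzell setting in weighted H\"older spaces is the nontrivial step --- which the paper settles by exhibiting an explicit decomposition $\tilde{\mathcal L} = \tilde{\mathcal T} + \tilde{\mathcal S}$ with $\tilde{\mathcal T}$ invertible (via the maximum principle) and $\tilde{\mathcal S}\tilde{\mathcal T}^{-1}$, $\tilde{\mathcal T}^{-1}\tilde{\mathcal S}$ compact. You genuinely deviate at two points. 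For $N(\mathcal L)=N(\mathcal L^2)$ you use the duality pairing $\lambda = \langle e^*, \mathcal L u\rangle = \langle \mathcal L^*e^*, u\rangle = 0$, which is shorter and cleaner than the paper's second sliding argument, though it presupposes the growth bounds on $e^*$. For the positivity of $e^*$ you propose Krein--Rutman on exhausting rectangles with Dirichlet lateral data, in place of the paper's sectorial-semigroup representation $e^{-t\mathcal L}u_0\to\langle e^*,u_0\rangle\,\partial_x\psi^0$; this route is plausible but leaves unjustified both the claim $\lambda_N\to 0$ and the nondegeneration of the limiting eigenfunction.

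The genuine gap is your argument for the growth of $e^*$. You assert (a) boundedness on $\{x>0\}$ ``from standard interior and boundary Schauder estimates'' and (b) $e^*\leq Ce^{-rx}$ on $\{x<0\}$ by ``a comparison identical to Proposition \ref{expdecay}''. Neither holds as stated. Schauder estimates are local and cannot rule out $e^*(x,\cdot)\to\infty$ as $x\to+\infty$. The comparison in Proposition \ref{expdecay} was anchored by the a priori fact that $\psi_s\to 0$ uniformly as $x\to-\infty$; for $e^*$ you have no such a priori control at $-\infty$, and a nonnegative solution of the adjoint problem could in principle grow faster than $e^{-rx}$, in which case the supersolution $Ce^{-rx}$ is useless. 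The paper closes this gap by a different mechanism: since $w\in X$ (equivalently $w\in Y$, because $w_1w=1$), the pairing $\langle e^*,w\rangle$ is finite by construction of the dual, which forces $e^*$ to be integrable against $w$; combining this with a \emph{full} Harnack inequality up to the Wentzell boundary (the weak Harnack inequality from Luo together with the local subsolution estimate, Theorem 9.20 of Gilbarg--Trudinger) shows that a blowing-up $\sup$ would force a blowing-up $\inf$, contradicting integrability. Without both the duality observation and the Harnack inequality, the two growth statements you need --- which your pairing argument for $N(\mathcal L)=N(\mathcal L^2)$ itself relies on --- remain unproved.
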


\begin{proof}
The proof will be postponed to the last paragraph of this section. It all relies on the fact that $\mathcal L$ is a Fredholm operator of index $0$ on the weighted space $X$.
\end{proof}

Now we want to transform the problem into a fixed Wentzell homogeneous problem. We do this by creating an auxiliary function $\tilde\psi(s,c^1,v)$ such that we search for $\psi^1$ as $$\psi^1 = \tilde\psi(s,c^1,v) + v$$ where $\tilde\psi(s,c^1,v)$ solves for $A>0$ large enough

\begin{center}
\begin{tikzpicture}
\draw (-7.5,0) -- (7.5,0) node[pos=0.5,above] {$\mathcal Wu = D\partial_{xx}\psi^0-(c^0+c^1s)\partial_x\psi^0 - (c^0+c^1s)(s-s^0)\partial_x (u+v) + D(s-s^0)\partial_{xx}(u+v)$};

\node at (0,-1) {$\mathcal Lu + Au = 0$};

\draw (-7.5,-2) -- (7.5,-2) node[pos=0.5,below] {$\partial_y u = 0$};

\end{tikzpicture}
\end{center}

\begin{lem}
\label{lemmec2alphaw}
 Such a function exists and satisfies $\tilde\psi\in \mathcal C^1([0,1]\times \mathbb R \times \mathcal{C}^{2,\alpha}_w(\overline{\Omega_L}); \mathcal{C}^{2,\alpha}_w(\overline{\Omega_L}))$.
\end{lem}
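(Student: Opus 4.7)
The plan is to recast the defining problem for $\tilde\psi$ as a fixed-point equation for an auxiliary linear Wentzell solution operator, and to conclude via a contraction argument followed by the implicit function theorem. Fix $A > 0$ so large that $A - \sup|f'(\psi^0)| > 0$ on $\overline{\Omega_L}$ and consider, for Wentzell data $g$ in an appropriate weighted H\"older space $\mathcal{Y}_w \subset \mathcal{C}^{\alpha}_w(\mathbb{R})$, the linear boundary-value problem
\begin{align*}
& \mathcal{L}u + Au = 0 \qquad \text{in } \Omega_L, \\
& \mathcal{W}u = g \qquad \text{on } y=0, \\
& \partial_y u = 0 \qquad \text{on } y=-L.
\end{align*}
I claim that this problem has a unique solution $u = \mathcal{T}g \in X = \mathcal{C}^{2,\alpha}_w(\Omega_L)$ depending continuously on $g$. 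Uniqueness comes from a weighted maximum principle applied to $w_1 u$: the coercive term $A - f'(\psi^0) > 0$ together with the ellipticity of $\mathcal{W}$ (which requires $s^0 > 0$, available here since $s^0 \in P_W$ is reached by continuation from $s=0^+$ and we work near a fixed $s^0>0$) excludes non-trivial bounded solutions of the homogeneous problem, just as in Proposition \ref{bounds}. Existence follows from the Fredholm alternative, since $\mathcal{L}$ (and hence $\mathcal{L}+A$) is Fredholm on $X$ by Lemma \ref{lemJMR} and the shift $+A$ kills the one-dimensional kernel of $\mathcal{L}$. Continuous dependence in $X$ follows from the Schauder estimates of \cite{LT91} applied to $w_1 u$, exactly as for $\psi^0$ in the paragraph preceding Lemma \ref{lemJMR}.

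With $\mathcal{T}$ at hand, rewrite the defining relation for $\tilde\psi$ as the fixed-point equation
$$u = \mathcal{T}\!\left[D\partial_{xx}\psi^0 - (c^0+c^1 s)\partial_x\psi^0 + (s-s^0)\bigl(D\partial_{xx}(u+v) - (c^0+c^1 s)\partial_x(u+v)\bigr)\right] =: \Phi(s,c^1,v)[u].$$
At $s = s^0$ the argument of $\mathcal{T}$ does not depend on $u$, so $\tilde\psi(s^0,c^1,v) := \Phi(s^0,c^1,v)[0]$ is the unique solution. For $s \in [s^0,s^0+\delta]$ with $\delta > 0$ small, the $u$-dependent part of $\Phi$ is linear in $u$ with operator norm on $X$ of order $(\|\mathcal{T}\| + |c^1|\,\|\mathcal{T}\|)\,|s-s^0|$. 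Choosing $\delta$ small enough (locally uniformly in $c^1$ and in $v$ on bounded subsets of $X$) makes $u \mapsto \Phi(s,c^1,v)[u]$ a contraction on $X$, and Banach's theorem produces the unique fixed point $\tilde\psi(s,c^1,v) \in X$.

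Finally, the $\mathcal{C}^1$ regularity in $(s,c^1,v)$ is provided by the implicit function theorem applied to
$$F(s,c^1,v,u) := u - \Phi(s,c^1,v)[u]$$
seen as a map $[0,1]\times\mathbb{R}\times X \times X \to X$. The map $F$ is $\mathcal{C}^1$, being polynomial in $(s,c^1)$, affine in $v$, and affine in $u$, so the smoothness of $F$ reduces to the continuity of $\mathcal{T}$ proved above. At a fixed point with $s$ close to $s^0$, $\partial_u F = \mathrm{Id} - \mathcal{O}(|s-s^0|)$ is invertible for $\delta$ small, so the implicit function theorem delivers $\tilde\psi \in \mathcal{C}^1([0,1]\times\mathbb{R}\times X; X)$. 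The main obstacle in this proof is the very first step, namely the weighted invertibility of $\mathcal{L}+A$ with a non-homogeneous Wentzell condition: the exponential decay at $-\infty$ must be handled by the weight $w$ exactly as for $\mathcal{L}$ in Lemma \ref{lemJMR}, while the coercive shift $+A$ is precisely what removes the one-dimensional kernel of $\mathcal{L}$ and turns Fredholm solvability into outright invertibility.
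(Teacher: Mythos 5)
Your proposal takes a genuinely different route from the paper. You build a linear solution operator $\mathcal{T}$ for the fixed Wentzell problem at $s=s^0$, view the $u$-dependent terms in the boundary condition as a perturbation, and close with a contraction plus the implicit function theorem. The paper instead absorbs those $u$-terms directly into the Wentzell operator (the coefficients become $-sD\partial_{xx}$ and $[c^0s^0+(c^0+c^1s)(s-s^0)]\partial_x$, still elliptic for every $s>0$), so the problem is \emph{genuinely linear} in $u$ and is solved in unweighted $\mathcal{C}^{2,\alpha}$ by a single citation of Theorem 1.6 of \cite{LT91}; the entire work of the lemma is then to transfer this into the weighted space $X$. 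Your contraction is therefore unnecessary, but not wrong, and your IFT step actually addresses the $\mathcal{C}^1$-dependence claim more explicitly than the paper does.

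There are, however, two real gaps in the construction of $\mathcal{T}$. First, you invoke the ``Fredholm alternative'' via Lemma~\ref{lemJMR} to get existence, but Lemma~\ref{lemJMR} concerns $\mathcal{L}$ restricted to functions satisfying the \emph{homogeneous} Wentzell condition $\mathcal{W}g=0$. Your $\mathcal{T}$ is supposed to solve a \emph{nonhomogeneous} boundary problem $\mathcal{W}u=g$. Passing from the former to the latter requires a lifting $u_0\in X$ with $\mathcal{W}u_0=g$, $\partial_y u_0|_{y=-L}=0$ (or, equivalently, the Fredholm property of the full operator $u\mapsto((\mathcal{L}+A)u,\mathcal{W}u,\partial_yu|_{-L})$ between weighted spaces), neither of which you supply; the paper sidesteps this entirely by quoting the direct nonhomogeneous existence theorem in \cite{LT91}. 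Second, and more importantly, the heart of the paper's proof is the quantitative weighted $L^\infty$ bound $K'\le w_1u\le K$ of Lemma~\ref{weightedlinf}, proved by a careful barrier/maximum-principle argument (with the constraint \eqref{condc1} on $c^1$ ensuring the sign of the coefficient $a_1$, and a compactness argument at $x\to-\infty$ when the infimum is not attained). Your write-up compresses this to ``a weighted maximum principle applied to $w_1u$'', used only to establish \emph{uniqueness}; it does not produce the a~priori bound $|w_1u|_\infty\lesssim|w_1g|_\infty$ that the local Schauder estimates need as input to conclude $u\in X$. Without that bound, the claim that $\mathcal{T}$ maps $\mathcal{C}^{\alpha}_w(\mathbb R)$ boundedly into $X=\mathcal{C}^{2,\alpha}_w(\Omega_L)$ --- the premise of everything downstream --- is not justified.
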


\begin{proof}
For $A>|f'(\psi^0)|_\infty$ it is known that the above problem has a unique solution that lies in $\mathcal{C}^{2,\alpha}(\overline{\Omega_L})$ provided $v \in \mathcal{C}^{2,\alpha}(\overline{\Omega_L})$, since this gives that the data for the Wentzell condition lies in $\mathcal C^{\alpha}(\mathbb R)$ (see theorem 1.6 in \cite{LT91} along with the remark at its end). What is important to show is that if $v$ lies in the weighted space, the solution $u$ is in it too. On $x\geq 0$, $w_1u$ is trivially $\mathcal C^{2,\alpha}$ as the product of a smooth bounded function and a $\mathcal C^{2,\alpha}$ function. The only problem might come from unboundedness at $x \to -\infty$. 
 In other words, we need to show that $u$ decays like $Ce^{rx}$ as $x\to -\infty$. We see that $w_1u$ satisfies an elliptic problem too, so conversely by Schauder estimates the problem is reduced to showing this $L^\infty$ bound for $w_1u$ on $x<0$. More precisely : we see that $w_1u$ solves :

\begin{center}
\begin{tikzpicture}
\draw (-6,0) -- (6,0) node[pos=0.6,above] {$(d\partial_y - sD\partial_{xx} + a_1\partial_x + a_2)(w_1u) = w_1\varphi \in \mathcal C^{1,\alpha}(\mathbb R)$};

\node at (0,-1) {$(-d\Delta + b_1 \partial_x + b_2)(w_1u) = 0$};

\draw (-6,-2) -- (6,-2) node[pos=0.5,below] {$\partial_y (w_1u) = 0$};

\filldraw [fill=gray!20,draw=black]
(-4.5,-0.2) node[below] {$B$}
(-3.5,0) arc (180:0:-1.1)
(-5.5,0) -- (-3.5,0);

\filldraw [fill=gray!20,draw=black]
(-5.5,-2) arc (180:0:1.1)
(-5.5,-2) -- (-3.5,-2);

\draw[blue](-5.5,-2) -- (-3.5,-2);
\draw[blue](-5.5,0) -- (-3.5,0) node [pos=0.5,above] {$T$};
\end{tikzpicture}
\end{center}
where on $x < 0$
\begin{alignat*}{1}
a_1 &= c^0s + c^1s(s-s^0) -2s^0Dr \\
a_2 &= s^0\left( -Dr^2 + c_0 r\right)  \geq 0 \\
b_1 &= c^0 - 2dr \\
b_2 &= A - f'(\psi^0) + c_0r - dr^2 \geq 0 
\end{alignat*}
and where 
$$\varphi(c^1,s,v) =  -(c^0+c^1s)\partial_x\psi^0 + D\partial_{xx}\psi^0 - (c^0+c^1s)(s-s^0)\partial_x v + D(s-s^0)\partial_{xx}v$$

%and on $x > 1$
%\begin{alignat*}{1}
%a_2 = s^0(-D\partial_{xx}w  + c^0\partial_x w)  \geq 0 \\
%b_2 = A - f'(\psi^0) -d\partial_{xx}w  + c^0\partial_x w  \geq 0
%\end{alignat*}
%by choice of $r$ and the concavity and monotonicity of $w$ on $x>1$.
%$$\spadesuit = kr \text{ on } x<0\text{, }0\text{ on }x > 1$$
%$$\diamondsuit = c^0-2dr\text{ on }x<0\text{, }c^0\text{ on }x > 1$$ 
%$$\heartsuit = A - f'(\psi^0) + c^0r -dr^2\text{ on }x<0\text{, }A - f'(\psi^0)\text{ on }x > 1$$ 
%all being smooth functions and  where $r=\frac{c_{min}}{d}$, $\varphi(s,c^1,v) = -(c^0+c^1s)\partial_x\psi^0 - (c^0+c^
%1s)(s-s^0)\partial_x v$ and $k(s,c^1) = c^0s^0 + (c^0+c^1s)(s-s^0)$.
Using Schauder estimates up to the boundary for the Wentzell problem, we see that provided a global $L^\infty$ estimate for $w_1u$, $w_1u$ is in $\mathcal C^{2,\alpha}(B\cup T)$ with constant independent of the position of the closed half balls $B\cup T$ depicted on the diagram above. Since we can cover all $\overline{\Omega_L}$ with translations of $B\cup T$, this gives $w_1u \in \mathcal C^{2,\alpha}(\overline{\Omega_L})$. This weighted $L^\infty$ global estimate is the object of the next lemma. It simply relies on the maximum principle.	
\end{proof}

\begin{lem}
\label{weightedlinf}

Let $u=\tilde{\psi}(s,c^1,v)$. There exists two constants $K' < 0, K > 0$ such that
$$K' \leq w_1u \leq K$$
\end{lem}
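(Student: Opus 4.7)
The strategy is to take $Kw$ as a supersolution and $K'w$ as a subsolution for the elliptic problem solved by $u = \tilde{\psi}(s, c^1, v)$, with $K > 0$ large and $K' < 0$ large in absolute value, and invoke the maximum principle to conclude $K'w \leq u \leq Kw$, which is precisely $K' \leq w_1 u \leq K$. Since $w \geq 1$ on $\{x \geq 0\}$, the substantial content is confined to $\{x \leq 0\}$ where $w(x) = e^{rx}$. As a preliminary I would record the global sup-norm estimate $|u|_\infty \leq C$, obtained from the construction in Lemma \ref{lemmec2alphaw} by a standard maximum principle argument using a bounded $y$-dependent barrier whose Wentzell value dominates $|\varphi|_\infty$; this last quantity is finite because the explicit formula for $\varphi$ only involves $\psi^0$ and $v$, both in $X$.

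Verifying that $Kw$ is a supersolution reduces on $\{x \leq 0\}$ to three routine inequalities: in the interior, $(\mathcal L + A)(Kw) = Kw(-dr^2 + c^0 r + A - f'(\psi^0)) \geq 0$ by the choice of $r$ in Definition \ref{defw} and by taking $A > |f'(\psi^0)|_\infty$; on the Wentzell boundary, $\tilde{\mathcal W}(Kw) = Kw \cdot sr(c^0 - Dr + c^1(s-s^0))$, which after dividing by $w$ gives the scalar inequality $K \cdot sr(c^0 - Dr + c^1(s-s^0)) \geq w_1 \varphi$, valid for $K$ large since the coefficient is positive (as $Dr \leq c^0/4$ from the choice of $r$, with $\delta$ small) and $|w_1\varphi|_\infty < \infty$ because $\psi^0, v \in X$; finally the Neumann condition is automatic and the boundary value at $x = 0$ is $K$, so $Kw \geq u$ there requires further $K \geq |u|_\infty$. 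The symmetric construction yields the subsolution $K'w$.

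The main obstacle is applying the maximum principle on the unbounded left half-strip. For $V := Kw - u$, if $\inf V = -m < 0$ is attained at a finite point $(x_*, y_*)$, the strong maximum principle handles the interior case (using the strictly positive zero-order coefficient $A - f'(\psi^0)$), and the Hopf lemma handles both boundary cases — in particular, at $y = 0$ the conjunction of $\partial_x V = 0$ and $\partial_{xx} V \geq 0$ at a minimum with $\partial_y V < 0$ from Hopf forces $\tilde{\mathcal W} V < 0$, contradicting $\tilde{\mathcal W} V \geq 0$. The delicate case is when the infimum is only attained asymptotically along $(x_n, y_n)$ with $x_n \to -\infty$: I would translate $V_n := V(\cdot + x_n, \cdot)$, extract by elliptic estimates a $\mathcal C^2_{\mathrm{loc}}$-limit $V_* = -u_*$ on the full strip (since $Kw(\cdot + x_n) \to 0$ pointwise), and note that the limit problem is homogeneous and translation-invariant because $\psi^0(\cdot + x_n) \to 0$ uniformly (Proposition \ref{expdecay}) so $f'(\psi^0) \to f'(0) = 0$, and $\varphi(\cdot + x_n) \to 0$ by the exponential decay of $\psi^0, v \in X$. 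The limit $u_*$ then has a positive maximum $m$ at a finite point, which the same strong maximum principle and Hopf arguments (applied now at a maximum) rule out. The lower bound $w_1 u \geq K'$ is obtained by the symmetric argument.
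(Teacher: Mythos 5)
Your proposal is correct and follows essentially the same route as the paper: take $Kw$ as a supersolution (respectively $K'w$ as a subsolution), check the interior inequality via $(\mathcal L+A)(Kw)>0$ (which is exactly where the choice $-dr^2+c^0r\geq 0$ is used), check the Wentzell inequality by computing the boundary operator on $e^{rx}$ (which is where $-Dr^2+c^0r\geq 0$ and the restriction keeping $c^0+c^1(s-s^0)-Dr>0$ enter), apply Hopf's lemma using $\partial_x V=0$, $\partial_{xx}V\geq 0$ at a tangential minimum, and dispose of an infimum escaping to $x\to-\infty$ by translation, local elliptic estimates, and passing to the limit problem where $f'(\psi^0)\to f'(0)=0$ and the data vanish. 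The one point where the paper is a bit more careful than your sketch is how the positivity of the Wentzell coefficient is guaranteed: since $c^1$ is still a free unknown at this stage (the implicit function theorem has not yet been applied), the paper imposes the explicit a priori restriction $c^1>-\frac{s^0 c_{\min}}{2\delta(s^0+\delta)}$ (condition \eqref{condc1}) rather than relying on ``$\delta$ small'' alone; this is then reconciled with the IFT by restricting the domain of $c^1$ accordingly. This is a matter of bookkeeping rather than a conceptual difference.
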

\begin{proof}
We already have $u \leq Kw$ for $K > \text{max}(0,\text{sup } u)$ on $\Omega_L^+$. We now want to show that this is also (eventually with a larger constant) true in $\Omega_L^-$ by using the maximum principle.
Suppose there exists a point where $u > Kw$. That means that $Kw-u$ reaches a negative minimum somewhere in $\overline{\Omega_L^-}$ or tends to a negative infimum as $x \to -\infty$. First, let us see that this minimum cannot be reached. $Kw-u$ satisfies :

\begin{center}
\begin{tikzpicture}
\draw (-6,0) -- (6,0) node[pos=0.5,above] {$(d\partial_y + a_1(c^1,s)\partial_x)(Kw-u) = a_1(c^1,s)rKw - \varphi$};

\node at (0,-1) {$(\mathcal L + A)(Kw-u) = (-dr^2 + c^0r + A -f'(\psi^0))Kw > 0$};

\draw (-6,-2) -- (6,-2) node[pos=0.5,below] {$\partial_y (Kw-u) = 0$};

\end{tikzpicture}
\end{center}
In order to conclude to contradiction thanks to the Hopf lemma, we only need to ensure $k(c^1,s)rKw - \varphi > 0$, i.e. $a_1(c^1,s)rK > \text{sup}w_1\varphi$. Now observe that thanks to \eqref{r} and since $s\in[s^0,s^0+\delta]$ then provided 
\begin{equation}
\label{condc1}
c^1 > -\displaystyle\frac{s^0 c_{min}}{2\delta (s^0+ \delta) }
\end{equation}
we have $a_1 > \dfrac{s^0c_{min}}{2}$ so that $K>\text{max}\left(0,\dfrac{2\text{sup}(w_1\varphi)}{rs^0c_{min}}\right)$ suffices and in the end we have the desired result with $$K = \text{max}\left( \text{max}\left(0,\text{sup}(u)\right),\text{max}\left(0,\frac{2\text{sup}(w_1\varphi)}{rs^0c_{min}}\right)\right)$$ 
From now on, we assume condition \eqref{condc1} and we will see that this is not restrictive.

Now if the minimum is obtained at infinity, let us denote $(x_n, y_n)$ a minimizing sequence. Since $y_n$ is bounded we can extract a subsequence that converges to $y_\infty\in[-L,0]$. Let us set 
\begin{equation}
\label{translates}
(Kw-u)^n(x,y) := (Kw-u)(x+x_n,y+y_\infty)
\end{equation}
 We have two subcases :
\begin{enumerate}[i)]
\item $y_\infty \in ]-L,0[$. Then \eqref{translates} defines a sequence of uniformly bounded functions in some small ball $B$ in the interior of $\Omega_L$. By standard elliptic estimates, we can extract from it a subsequence that converges in $\mathcal C^2(B)$ to some $(Kw-u)^\infty$ that satisfies $(-d\Delta + c^0\partial_x + A)(Kw-u)^\infty \geq 0$ in $B$ but reaches its negative infimum $m < 0$ inside $B$ : as a consequence, $(Kw-u)^\infty \equiv m$ in $B$, but this is impossible since $Am < 0$.
\item $y_\infty = 0$ or $-L$ : the exact same analysis applies, replacing the ball $B$ by a half-ball $B_\pm$ supported on $y=0$ or $y=-L$ and using elliptic estimates up to the boundary, and Hopf's lemma.
\end{enumerate}

For the other bound, we proceed in the same way by looking at $u-K'w$ with $K' < \text{min}(0,\frac{\text{inf}u}{2})$ and using the existence of $\text{inf}(w\varphi)$, we get $$K' = \text{min}\left(\text{min}\left(0,\frac{\text{inf}(u)}{2}\right), \text{min}\left(0,\frac{2\inf(w_1\varphi)}{rs^0c_{min}}\right)\right)$$ that works.
\end{proof}

Thanks to this auxiliary function, we are now left with the following equivalent problem, on $v$ : 

\begin{center}
\begin{tikzpicture}
\draw (-6,0) -- (6,0) node[pos=0.5,above] {$\mathcal W v = 0$};

\node at (0,-1) {$\mathcal Lv + c^1\partial_x\psi^0= R(s-s^0,c^1,v) - \mathcal L\tilde\psi(s,c^1,v)$};

\draw (-6,-2) -- (6,-2) node[pos=0.5,below] {$\partial_y v = 0$};

\end{tikzpicture}
\end{center}
Calling $\mathcal P = <e^*,\cdot>\partial_x\psi^0$ and $\mathcal Q = Id - \mathcal P$ the projections onto $Y_2$ and $R(\mathcal L)$ we are now able to apply these projections onto the equation to get a set of two equations that are equivalent to this one. Nonetheless, since $\tilde\psi$ on the boundary $y=0$ depends on $c^1$ even when $s=s^0$, we should be careful and try to make this dependence explicit. For this, we need to have an explicit representation of $e^*$ to be able to compute the projections. This technical difficulty only comes from the fact that the unknown $c$ appears in the boundary condition of \eqref{frontswentzell}.

Thanks to the smoothness and decay properties of $e^*, v$ and $\partial_x \psi_0$, all the integration by parts make sense and we find

$$\int_{\Omega_L} e^*\mathcal Lv = \int_{y=0}(v d\partial_y e^* - e^* d\partial_y v) = 0$$

\begin{equation*}
\begin{split}
\int_{\Omega_L} e^*\mathcal L\tilde{\psi} =& \int_{y=0} e^* \left(\left(c^0+c^1s\right)\partial_x \psi^0 - D\partial_{xx}\psi^0 \right) \\
&+ (s-s^0)\int_{y=0} e^* \left(\left(c^0+c^1s\right)\partial_x(\tilde\psi + v) - D\partial_{xx}(\tilde\psi + v) \right)
\end{split}
\end{equation*}
and we get the first equation \footnote{where $\tilde\psi$ means $\tilde\psi(s,c^1,v) $ and $R$ means $R(s-s^0,c^1,v+\tilde\psi(s,c^1,v))$}:
\begin{equation}
\label{c1}
\begin{split}
c^1\left(1 + s\int_{y=0}e^*\partial_x\psi^0\right) =&  -\int_{y=0}e^*\left(c^0\partial_x\psi^0 - D\partial_{xx}\psi^0\right)\\ &+\int_{\Omega_L} e^*R \\
& - (s-s^0)\int_{y=0}e^*\left(\left(c^0+c^1s\right)\partial_x(\tilde\psi + v) - D\partial_{xx}(\tilde\psi + v) \right)
 \end{split}
\end{equation} 
The second equation should be seen as an equation on $v_R \in X_1$ with the decomposition $$v = v_N\partial_x\psi^0 + v_R$$ and $v_N\in\mathbb R$ being free : this is, of course, due to the $x$-translation invariance of \eqref{frontswentzell}. From now on, we fix $v_N \in \mathbb R$.

\begin{equation}
\label{vR}
\begin{split}
\mathcal Lv_R =&\ R - \left(\int_{\Omega_L} e^*R\right)\partial_x\psi^0 - \mathcal L\tilde\psi    \\  
&+(c^0+c^1s)\left(\int_{y=0}e^*\partial_x\psi^0 +(s-s^0)\int_{y=0}e^*\partial_x(\tilde{\psi}+v)\right)\partial_x\psi^0
 \end{split}
\end{equation} 

The system of equations  \eqref{c1}, \eqref{vR} is non-linear and coupled but in the case $s=s^0$ it is much simpler. It becomes  
 \begin{alignat}{1}
c^1\left(1 + s^0\int_{y=0}e^*\partial_x\psi^0\right) & =  -\int_{y=0}e^*\left(c^0\partial_x\psi^0 - D\partial_{xx}\psi^0\right) \label{c1s0} \\
 \mathcal Lv_R &= -\mathcal L\tilde{\psi} + (c^0+c^1s^0)\left(\int_{y=0}e^*\partial_x\psi^0\right)\partial_x\psi^0 \label{vrs0} 
 \end{alignat} which has clearly a unique solution : since $\int_{-\infty}^{+\infty} (e^*\partial_x\psi^0)(x,0)dx > 0$, \eqref{c1s0} has a solution $c^1_*$ that satisfies condition \eqref{condc1} provided $\delta$ is small enough. \eqref{vrs0} is automatically uniquely solvable with a solution $v_R^*$ since its right hand side lies in $R(\mathcal L)$ and does not depend on $v$.

Now for $s > s_0$ we said that this system was non-linear and coupled, but this is when the implicit function theorem does all the work. Since $X_1$ is closed in $X$ and $\mathcal L$ is Fredholm so image-closed, we have the right Banach setting to apply it. We may see this system of equations as $F(s,c^1,v_N,v_R) = 0$ with 
$$F : \left[s_0,s^0 + \delta\right]\times \left[-\frac{s^0c_{min}}{2\delta(s^0+\delta)},+\infty\right[ \times X_1 \to \mathbb R \times R(\mathcal L)$$ and $\delta > 0$ small enough so that condition \eqref{condc1} is satisfied, associating to its parameters the equations $\eqref{c1},\eqref{vR}$ in this order. $F$ is a $\mathcal C^1$ function because it consists in affine bounded operators composed with usual and $\mathcal C^1$ functions. Moreover, we can compute the differential of $F$ at $(s^0,c^1_*,v_R^*)$ with respect to $(c^1,v_R)$. In matrix representation, it is 

$$\begin{pmatrix}
  1+s^0\int_{x=0}e^* \partial_x\psi^0 & 0 \\
  * & \mathcal L
 \end{pmatrix}$$
which is invertible since $1+s^0\int_{x=0}e^* \partial_x\psi^0 > 0$, and $\mathcal L$ is invertible on $X_1$. That being, the implicit function theorem says that there exists $\delta' > 0$ and a neighbourhood $\mathcal V$ of $(c^1_*,v_R^*)$ such that for each $s\in [s^0,s^0+\delta'[$, the system of equations has a unique solution $(c^1_s,v_R^s)\in \mathcal V$. Then we can construct back $\psi$ from $c^1_s,v_R^s,v_N$ and it will clearly satisfy the original equation. The left limit condition for it is obtained directly because of the structure of $X$. The only thing left to show is that the right limit condition holds. This is the case provided $\delta$ is taken small enough, and it is the object of the next proposition. 
\begin{rmq}
Note that this is valid for every $v_N \in \mathbb R$, which will provide us with a whole $1$-dimensional manifold of solutions in the end. Of course, thanks to Proposition \ref{uniqueness} all of these solutions will be $x$-translates of each other.
\end{rmq}

\begin{prop}
\label{rightlimitcd}
Let $$c = c^0 + (s-s^0)c^1,\  \psi = \psi^0 + (s-s^0)\psi^1$$ If $\delta > 0$ is small enough, we have uniformly in $y$ : $$\lim_{x\to+\infty} \psi(x,y) = 1$$ 
\end{prop}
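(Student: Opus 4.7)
The plan is to use the $L^\infty$-closeness of $\psi$ to $\psi^0$ coming from the implicit function theorem to reduce the right-limit question to the machinery already developed in Section \ref{pwclosed}. First I would observe that the continuity in $s$ near $s^0$ of $c^1_s$ and $v_R^s$ from the implicit function theorem, together with Lemma \ref{lemmec2alphaw}, gives a uniform bound on $|\psi^1|_X$ for $s\in[s^0,s^0+\delta']$; since $w$ is bounded above, this yields $|\psi-\psi^0|_{L^\infty(\Omega_L)}\le C\delta'$. In particular $\psi(x,y)\to 0$ uniformly as $x\to-\infty$ (using $|\psi^1(x,y)|\le Ce^{rx}$ on the left), and for $\delta'$ small enough $\psi(x,y)>(1+\theta)/2$ for $x$ sufficiently large, by the uniform right limit of $\psi^0$.

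Next I would establish the a priori bounds $0\le\psi\le 1$. If $\sup\psi>1$, the left-limit condition forces a maximising sequence $(x_n,y_n)$ to have $x_n$ bounded below; if $x_n$ is bounded, the supremum is attained and the argument of Proposition \ref{bounds} applies verbatim, while if $x_n\to+\infty$ I would use the uniform Schauder bounds of Proposition \ref{schauderwentzell} together with Ascoli to extract a $\mathcal{C}^2_{\mathrm{loc}}$ limit of the $x$-translates solving $(W)_s$ with the supremum attained, then conclude by the strong maximum principle in the interior, Hopf at $y=-L$, and Hopf combined with the Wentzell boundary condition at $y=0$, using the negativity of the extension of $f$ past $1$. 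For $\psi\ge 0$, the previous paragraph gives $\liminf_{x\to+\infty}\psi(x,y)>0$ for $\delta'$ small, so a negative infimum would be attained (possibly after translation); near such a point $f(\psi)\equiv 0$, so $\psi$ satisfies the homogeneous linear equation $-d\Delta\psi + c\partial_x\psi = 0$, and the strong maximum principle forces $\psi$ to be constant, contradicting the left limit.

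With $0\le\psi\le 1$, the integration-by-parts arguments of Lemma \ref{integrals} apply (they use only the $L^\infty$ bounds, Schauder estimates on $\partial_x\psi$ and the equation) to give $\int_{\Omega_L}f(\psi)<+\infty$ and $\int_{\Omega_L}|\nabla\psi|^2<+\infty$. The translation and extraction argument of Proposition \ref{rightlimit} then shows that any subsequential limit of $\psi(\cdot+x_n,\cdot)$ with $x_n\to+\infty$ is a constant $\beta$ with $f(\beta)=0$, so $\beta\in[0,\theta]\cup\{1\}$. Combining with the lower bound $\psi>(1+\theta)/2$ for $x$ large from the first paragraph rules out $\beta\le\theta$ and forces $\beta=1$; uniqueness of all subsequential limits then gives the desired uniform convergence $\psi(x,y)\to 1$.

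The main obstacle is the a priori bound $0\le\psi\le 1$: because the weight $w$ only controls decay at $-\infty$, $\psi$ has no a priori limit at $+\infty$, and both inequalities require a translation argument to attain the extremum in a limit profile. The lower bound is the more delicate of the two, since the flatness of $f$ below $0$ prevents the direct sign argument of Proposition \ref{bounds}, and one must instead invoke the strong maximum principle for the homogeneous linear operator $-d\Delta + c\partial_x$ to reach the contradiction.
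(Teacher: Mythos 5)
Your proposal is correct, but it follows a genuinely different route from the paper. The paper's proof is quantitative: after establishing $\psi<1$ and using the same $L^\infty$-closeness to $\psi^0$ that you use, it linearizes the equation near $1$, observes that a pure exponential $e^{-\gamma x}$ cannot satisfy the Wentzell boundary inequality, and therefore constructs an explicit supersolution of the form $e^{-\gamma x}\cosh\bigl(\beta(\gamma)(y+L)\bigr)$ by solving a transcendental equation in $\gamma$; comparison then gives $1-\psi\le C e^{-\gamma x}$, i.e.\ an exponential rate. Your approach is softer: you push the a priori bound $0\le\psi\le 1$ through (the upper bound along the lines of Proposition~\ref{bounds}, handling the $x_n\to+\infty$ case by the translation--compactness trick as in Lemma~\ref{weightedlinf}), and then simply rerun the finite-energy and translation machinery of Lemma~\ref{integrals} and Proposition~\ref{rightlimit}, with the closeness estimate $\psi>(1+\theta)/2$ for $x$ large replacing the normalization~\eqref{normalisation} to eliminate $\beta\le\theta$; notably, you do not even need monotonicity of $\psi$ in $x$, since the subsequential limits are pinned down by the closeness estimate rather than by monotone convergence. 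What the paper's approach buys is the explicit exponential rate, which moreover prefigures the analogous $\cosh$-profile supersolution that is reused in the system case in Proposition~\ref{rightlimitcdsys}; what your approach buys is economy, reusing already-established lemmas and avoiding the transcendental-equation analysis of $\gamma$. Two small places where you gloss over details: in the lower bound $\psi\ge 0$, no translation is in fact needed (a negative infimum is attained at a finite point, since $\psi\to 0$ at $-\infty$ and $\liminf\psi>0$ at $+\infty$), and the strong-maximum-principle contradiction should explicitly cover the two boundary cases (Hopf at $y=-L$; at $y=0$ use that at a boundary minimum $\partial_x\psi=0$, $\partial_{xx}\psi\ge 0$ so the Wentzell condition forces $d\partial_y\psi\ge 0$, contradicting Hopf), just as you spelled out for the upper bound.
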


\begin{proof}
First, we show that $\psi < 1$, by contradiction. We know that $\psi \in \mathcal C^{2,\alpha}_w$ is bounded. Suppose there exists a point where $\psi > 1$. Then either $\psi - 1$ reaches a positive maximum somewhere, or it tends to a positive maximum as $x \to \infty$. These two cases are both impossible, because of respectively the argument given in theorem \ref{bounds} and the compactness argument given in the proof of Lemma \ref{weightedlinf} (take $B$ or $B_\pm$ small enough so that $f(\psi) < 0$ on it). So $\psi \leq 1$ and the strong maximum principle and Hopf's lemma and the fact that $\psi$ cannot be constant give $$\psi < 1$$

Now we fix $\varepsilon > 0$. For $a$ large enough we have $\psi^0 > 1-\frac{\varepsilon}{2}$ on $x \geq a$. Moreover, we can take $\delta$ small enough such that $|(s-s^0)\psi^1|_\infty < \frac{\varepsilon}{2}$, what gives $1-\varepsilon < \psi < 1$ for $x \geq a$. We assert that this property suffices to have $\psi \to 1$ for $\delta$ small enough, and we will show that by a maximum principle argument using an exponential solution to the right for the linearised problem near $1$.

On $x \geq a$, by Taylor's formula applied on $f$, we have $$-d\Delta (1-\psi) + c\partial_x (1-\psi) = f'(1)(1-\psi) + o(\varepsilon)$$ So by choosing $\varepsilon > 0$ small enough, we have

$$ L_1(1-\psi) := -d\Delta (1-\psi) + c\partial_x (1-\psi) - \frac{1}{2} f'(1)(1-\psi) \leq 0$$

We now look for a positive solution $p$ of $L_1p = 0$ endowed with the boundary condition of \eqref{frontswentzell} that has exponential decay as $x\to +\infty$, for comparison purposes. Unlike the proof of Proposition \ref{cmax}, we cannot expect a supersolution with the form $p(x,y) = e^{- \gamma x}$ with $\gamma > 0$, since inequations
\begin{alignat*}{1}
- d\gamma^2 - c\gamma - 1/2 f'(1) \geq 0 \\
s(-D\gamma^2 - c \gamma) \geq 0
\end{alignat*}
cannot be solved simultaneously. This motivates the research for a $p(x,y) = e^{-\gamma x} \phi(y)$, $\phi > 0$. For $p$ to be a solution of $L_1p = 0$ endowed with the boundary condition of \eqref{frontswentzell}, the equations are 

\begin{equation}
\begin{cases}
- \phi'' + \left(-\frac{1}{2d}f'(1) - \gamma\left(\gamma + \frac{c}{d}\right)\right)\phi = 0 \\
d\phi'(0) - s(-D\gamma^2-c\gamma)\phi(0) = 0 \label{eqgamma} \\
\phi'(-L) = 0
\end{cases}
\end{equation}
Since $f'(1) < 0$, this can be solved by 
$$\phi(y) = \cosh\left(\beta(\gamma)\left(y+L\right)\right)$$
where $$\beta(\gamma) = \sqrt{-f'(1)/(2d)-\gamma(\gamma+c/d)}$$
and $0 < \gamma < \gamma_{lim} = \frac{\sqrt{c^2 - 2df'(1)}-c}{2d}$ solving
$$s(D\gamma^2 + c\gamma) = d\beta(\gamma)\tanh(\beta(\gamma)L)$$ 
as pictured in Figure \eqref{figuregamma}.

    \begin{figure}[h!]
    \label{figuregamma}
    \centering
    
    \includegraphics[width=0.5\linewidth]{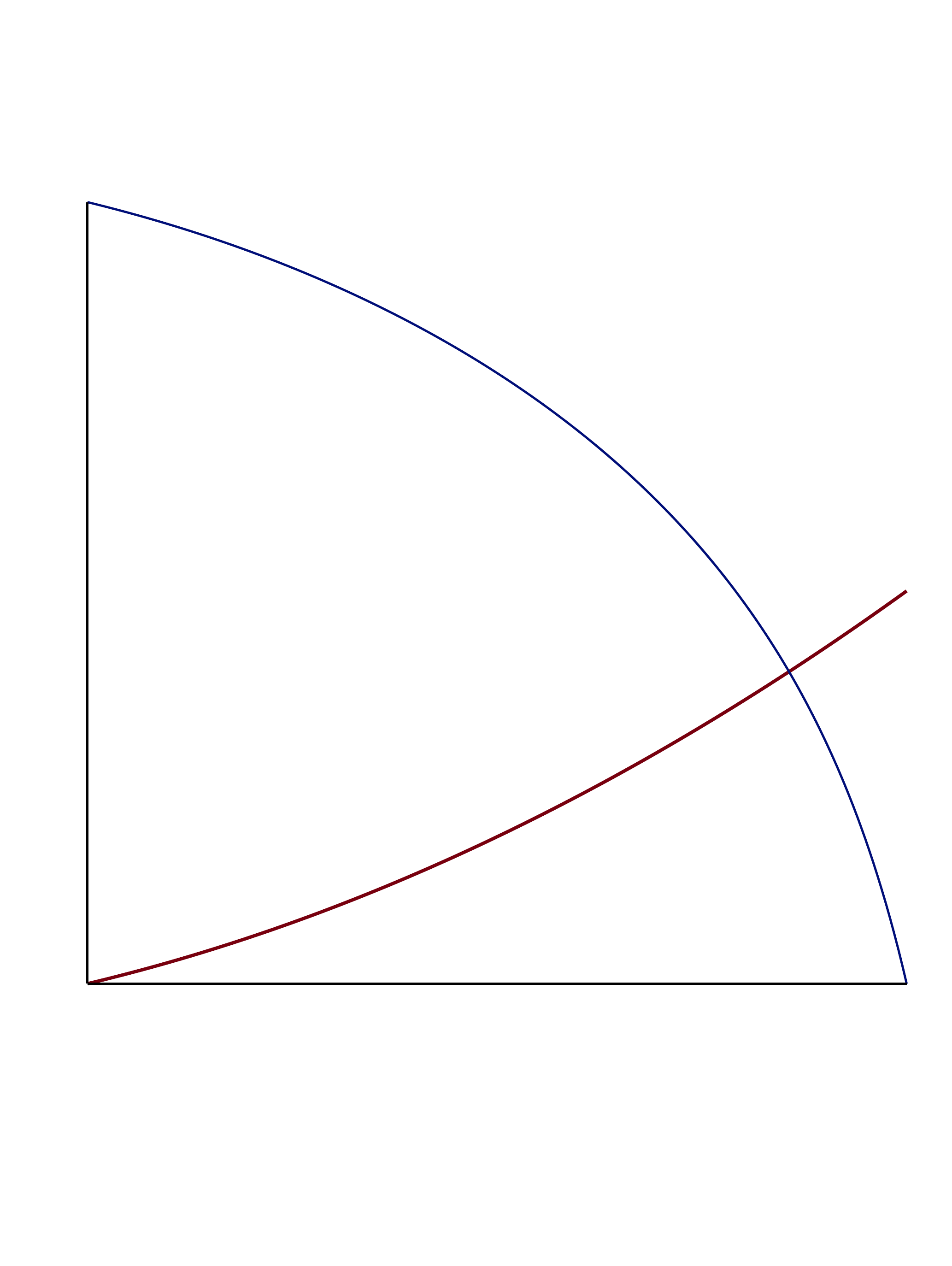}
    \rput(-7.4,2){$0$}
    \rput(-0.4,2){$\gamma_{lim}$}
    
    \rput(1.2,8.8){$d\beta(\gamma)\tanh(\beta(\gamma)L)$}
    \rput(4,6.6){$s(D\gamma^2 + c\gamma)$}
    \vspace{-70pt}
    
    \caption{Equation \eqref{eqgamma} on $\gamma$}
    \end{figure}  
Now chose $C > 0$ such that $1-\psi < Cp$ on $x=a$ and observe that $U = Cp - (1-\psi)$ solves on $x\geq a$
\begin{equation*}
  \begin{tikzpicture}
  \draw (-5.5,0) -- (7.5,0) node[pos=0.6,below] {\small{$d\partial_y U + s\left(-D\partial_{xx} U + c\partial_x U\right) = 0$}} ;

  \node at (1.5,-1.5) {$L_1 U \geq 0 $};

  \draw (-5.5,-3) -- (7.5,-3) node[pos=0.5,above] {\small{$-d\partial_y U = 0$}};
    
  \draw (-5.5,-3) -- (-5.5,0) node[pos=0.5,left]{$U > 0$};

  \end{tikzpicture}
\end{equation*}
Now suppose that there is a point where $U < 0$. Then either $U$ reaches a negative minimum or tends to a negative infimum $m < 0$ as $x\to +\infty$. The first case is impossible thanks to the strong maximum principle and Hopf's lemma. The second is impossible also thanks to the compactness argument already given in Proposition \ref{weightedlinf}, since $L_1(m) = -1/2 f'(1)m < 0$. As a consequence, for all $x\geq a$ :

$$ 0 < 1-\psi \leq Ce^{-\gamma x}\phi(y) \leq C\max(\phi) e^{-\gamma x}$$
which gives the desired result by sending $x\to +\infty$.

\end{proof}

This section is now finished and Theorem \ref{w} is proved. 
\begin{rmq}
Note that the above subsection does not apply exactly when $s^0=0$. Indeed, in this case the estimates up to the Wentzell boundary do not hold. Nonetheless, this situation is way simpler : just apply the standard estimates up to the Neumann boundary. We leave it to the reader to check that everything holds, condition \eqref{condc1} being replaced by $c_1 > -c^0/\delta$ and all the other computations being simpler (for instance, no information on $e^*$ is needed).
\end{rmq}

\subsection{Proof of lemma \ref{lemJMR}}
\label{propL}
\subsubsection*{Proof of the Fredholm property}
$\mathcal L$ is Fredholm of index $0$ as an operator $\mathcal C^{2,\alpha}_w \to \mathcal C^\alpha_w$ if and only if $\tilde{\mathcal L}u := \frac{1}{w} \mathcal L(wu)$ defines a Fredholm operator of index $0$ as an operator $\mathcal C^{2,\alpha} \to \mathcal C^\alpha$, endowed with the boundary condition $\partial_yu = 0$ on $y=-L$ and $d\partial_y u + \frac{1}{w} c^0s^0 \partial_x(wu) = 0$ on $y=0$.

We do not have any closed formula for the coefficients of $\tilde{\mathcal L}$, but we know that
$$\tilde{\mathcal L}u = -d\Delta u + (c^0-2dr)\partial_x u + (c^0r - dr^2 - f'(\psi^0))u \text{ on } x < 0, \tilde{\mathcal L} = \mathcal L \text{ on } x>1$$
Moreover the $0$-order coefficient of $\tilde{\mathcal L}$ is $c^0r - dr^2 > 0$ on $x<0$, and tends to $-f'(1) > 0$ uniformly in $y$ as $x\to\infty$ ; thus it is greater than some positive constant, away from a compact set : this indicates a decomposition invertible + compact for $\mathcal L$.

The boundary condition $\tilde{\mathcal L}$ is endowed with is unchanged on $y=-L$ and is $$d\partial_y u + c^0s^0\partial_x u + (\cdots \geq 0 )u = 0$$ on $y=0$, thanks to the definition of $r$ and the properties of $w$ asked in Definition \ref{defw}.

So call $\gamma(x)$ a positive function that smoothly connects $c^0r - dr^2 > 0$ on $x<0$ with $-f'(1)$ on $x>1$ such that $\gamma \geq \min(c^0r - dr^2 > 0,-f(1)) := \gamma_0 > 0$. We now call $\tilde{\mathcal T}$ the operator $\tilde{\mathcal L}$ with its $0$-order coefficient replaced by $\gamma(x)$, and we want to show that $\tilde{\mathcal T}$ is invertible, and that $\tilde{\mathcal S} := \tilde{\mathcal L} - \tilde{\mathcal T}$ satisfies $\tilde{\mathcal S}\tilde{\mathcal T}^{-1}$ is compact on $\mathcal C^\alpha$ and $\tilde{\mathcal T}^{-1}\tilde{\mathcal S}$ on $\mathcal C^{2,\alpha}$, in order to have $$\tilde{\mathcal L} = (Id + \tilde{\mathcal S}\tilde{\mathcal T}^{-1})\tilde{\mathcal T} = \tilde{\mathcal T}(Id + \tilde{\mathcal T}^{-1}\tilde{\mathcal S})$$ which is the Fredholm property with index $0$ we want.

First suppose that $\tilde{\mathcal T}$ is indeed invertible : then the compactness of the perturbation is easy to obtain. Indeed, $\tilde{\mathcal S}$ is no more than the multiplication by a function that is $\equiv 0$ on $x\leq 0$ and that tends uniformly in $y$ to $0$ as $x\to\infty$. So, taking $(u_n)$ a bounded sequence in $\mathcal C^\alpha(\Omega_L)$, we have that $(\tilde{\mathcal T}^{-1} u_n)$ is bounded in $\mathcal C^{2,\alpha}$, so by applying a chain of Ascoli theorems and the process of diagonal extraction we can extract from $(\tilde{\mathcal T}^{-1} u_n)$ a sequence we note $(v_n)$ that converges in $\mathcal C^2_{loc}$ to $v$. We now want to extract from $(\tilde{\mathcal S} v_n)$ a sequence that converges in $\mathcal C^\alpha$. But this is easy since $\tilde{\mathcal S} v_n = 0$ on $x<0$ and $\tilde{\mathcal S} v_n \to 0$ uniformly in $y$ as $x\to\infty$, so in fact the $\mathcal C^2_{loc}$ convergence of $v_n$ suffices to have $\tilde{\mathcal S} v_n \to \tilde{\mathcal S}v$ in whole 
$\mathcal C^2$, so in $\mathcal C^\alpha$. For $\tilde{\mathcal T}^{-1}\tilde{\mathcal S}$ on $\mathcal C^{2,\alpha}$ we apply the same argument : we just have to see that $\tilde{\mathcal T}^{-1}(\tilde{\mathcal S}(u_n))$ is bounded in $\mathcal C^{4,\alpha}$ since $u_n$ is bounded in $\mathcal C^{2,\alpha}$. Then we extract from it something that converges in $\mathcal C^3_{loc}$, but in fact, in whole $\mathcal C^3$ so in $\mathcal C^{2,\alpha}$.

It remains to show that $\tilde{\mathcal T} : \mathcal C^{2,\alpha} \to \mathcal C^\alpha$ is indeed invertible, that is, to show that the following problem is uniquely solvable
\begin{center}
\begin{tikzpicture}
\draw (-6,0) -- (6,0) node[pos=0.5,above] {$d\partial_y u +
(\cdots)\partial_x u + (\cdots \geq 0) u = 0$};

\node at (0,-1) {$\tilde{\mathcal T} u = f \in \mathcal C^\alpha(\Omega_L)$};

\draw (-6,-2) -- (6,-2) node[pos=0.5,below] {$\partial_y u = 0$};

\end{tikzpicture}
\end{center}
but this is the case, since the $0$-order coefficient of $\tilde{\mathcal T}$ is $>0$ (see theorem 6.31 in \cite{GT} and more precisely the remark at the end of its proof).

\subsubsection*{Computation of the kernel}
Suppose $\mathcal Lu = 0$. We will show that $$P := \{\Lambda \in \mathbb R \ | \forall \lambda < \Lambda, \ u > \lambda \partial_x\psi^0 \}$$ has a supremum $\lambda_0$, and that $u = \lambda_0\partial_x\psi^0$. First, we show that this set is non-void : for every truncated (compact) rectangle $K$, we can find $\lambda\in \mathbb R$ such that $u > \lambda \partial_x\psi^0$ on $K$.  Now just chose $K$ big enough such that outside $K$ we have $f'(\psi^0) \leq 0$, so we have the strong maximum principle, and since $\mathcal L(u-\lambda\partial_x\psi^0) = 0$, the comparison $u - \lambda \partial_x\psi^0 > 0$ is inherited in all $\Omega_L$ \footnote{a point where it is $\leq 0$ at the left of $K$ means that a non-positive minimum is reached at the left of $K$, which is impossible ; the right of $K$ is treated in the same way but with the compactness argument given in Lemma \ref{weightedlinf} since we do not know a priori that $u - \lambda \partial_x\psi^0 \to 0$ as $x\to\infty$ even if it is the case.}. Now $P$ being non-void and trivially bounded from above, it has the supremum we announced. By continuity, $u - \lambda_0\partial_x\psi^0 \geq 0$, and moreover we have $\mathcal L(u - \lambda_0\partial_x\psi^0) = 0$. Now suppose $u - \lambda_0\partial_x\psi^0 \not\equiv 0$ by contradiction : because of the strong maximum principle, we have $u - \lambda_0\partial_x\psi^0 > 0$, and again on any truncated rectangle $K$ we can find $\varepsilon > 0$ small enough such that $u > (\lambda_0 + \varepsilon) \partial_x\psi^0$ on $K$, and choosing $K$ large enough and proceeding as above, we have a contradiction regarding the maximality of $\lambda_0$.

Now, suppose $\mathcal L^2 u = 0$. Then $\mathcal Lu = \alpha\partial_x\psi^0$ for some $\alpha \in \mathbb R$. We suppose $\alpha \neq 0$ and we will obtain a contradiction. By linearity we can suppose $\alpha = 1$, i.e. $\mathcal Lu = \partial_x\psi^0 > 0$. Now, the fact that for every $\lambda\in\mathbb R$, $\mathcal L(u-\lambda\partial_x\psi^0) = \partial_x\psi^0$ is positive, enables to do the exact same proof as above to have a contradiction too (we will necessarily have $u > \lambda_0 \partial_x \psi^0$ and the contradiction, since $\mathcal Lu \neq 0$).

\subsubsection*{Properties of $e^*$}
Finally, let $e^*$ generate the kernel of the adjoint of $\mathcal L$. Let us normalise $e^*$ by the condition $<e^*,\partial_x\psi_0> = 1$, and show that $e^*$ is a positive measure. Similarly to \cite{JMR92}\footnote{where the author treats this exact problem with a Neumann condition instead of a Wentzell one, but this does not change his proof.} we infer that $\mathcal L$ is sectorial on $BW_0 := \{u \in UC_0(\Omega_L)  \mid w_1u \in UC_0(\Omega_L)\}$ and that $0$ is the bottom of its spectrum. As a consequence we have the following realisation of $e^*$ on $BW_0$ :
$$\forall u_0\in BW_0\text{, }\lim_{t\to+\infty} e^{-tL}u_0 = <e^*,u_0>\partial_x\psi_0$$
Indeed, decomposing $u_0$ on $N(\mathcal L) \subset BW_0$ and its orthogonal complement we get $e^{-tL}u_0 = e^{-tL}(<e^*,u_0>\partial_x\psi_0 + b_0)$, the first term being constantly $<e^*,u_0>\partial_x\psi_0$ and the second one decaying exponentially fast to zero as $t\to+\infty$. Knowing that $\partial_x\psi_0 > 0$ and applying this on every non-negative $u_0$ in $\mathcal D(\Omega_L) \subset BW_0$, since non-negativity is preserved over time for $e^{-tL}u_0$, we get that $e^*$ is a non-negative distribution, that is a positive measure.

Moreover, $e^*$ satisfies $\mathcal L^* e^* = 0$ in the sense of distributions along with its dual boundary condition, which is an hypoelliptic problem (see \cite{Cat92} Thm 4.2 or \cite{Pan86} Thm 3)(iii)). As a consequence, $e^*$ is a smooth non-negative function up to the boundary of $\Omega_L$. 

Then, the strong maximum principle gives $e^* > 0$. Finally, using the weak Harnack inequality up to the boundary of \cite{Luo93} and the classical subsolution estimate up to the boundary of \cite{GT}, Theorem 9.20, we obtain a full Harnack inequality up to the boundary for $e^*$. Using it in $x$ large (where $-f'(\psi_0) > 0)$) on half-balls touching the boundaries we get that $e^*$ is bounded : indeed, if its supremum were blowing up, its infimum would also : but this is impossible since $e^*$ is integrable on $x>0$ (note that $w \in X$).

The same argument for $-x$ large gives that $e^*$ has at most a $Ce^{-rx}$ growth.

\section{Continuation from small $\varepsilon > 0$ to $\varepsilon = 1$}
\label{epsto1}
First we avoid the singularity near $\varepsilon = 0$ : it will be studied later since it deals with a very unusual boundary condition. 
Let us set for $\varepsilon_0 > 0$, $$P_{\varepsilon_0} = \{\varepsilon \in [\varepsilon_0,1] \quad | \quad \eqref{Seps}\text{ has a solution}\}$$
We now adapt the proofs of the previous section, following the same steps. The main differences are technical : all the computations are adapted easily, the counterpart of the regularity result in Proposition \ref{schauderwentzell} has no technical difficulty any more, but the weight function in Section \ref{pepsopen} changes a bit. For technical reasons we had to chose $e^{rx}$ everywhere, so we will need to be careful about the boundedness of solutions.

\subsection{$P_{\varepsilon_0}$ is closed}
This subsection follows exactly subsection \ref{pwclosed}. Consider a sequence $\varepsilon_n \to \varepsilon_\infty \in [\varepsilon_0,1]$ and call $(c_n, \phi_n, \psi_n)$ the associated sequence of solutions of \eqref{Seps} normalised in translation by
\begin{equation}
	\label{condsys}
 \max_{x \leq 0, y\in[-L,0]}\ (\mu u_\varepsilon(x), v_\varepsilon(x,y)) = \theta
\end{equation}

Thanks to Propositions \ref{sectioncmax} and \ref{cmin} we can extract from $c_n$ a subsequence such that 
\begin{equation}
\lim_{n\to+\infty} c_n = c_\infty > 0
\end{equation}
We now state a regularity result which is the counterpart of Proposition \ref{schauderwentzell} in the case of \eqref{Seps} :

\begin{prop}
There exists $\alpha > 0$ and constants $C_{Sch1,2}=C(D,d,c_{max},\text{Lip} f,L,\mu)$ such that for all $n \geq 0$
\label{schaudersystem}
\begin{alignat*}{1}
|\psi_n|_{\mathcal C^{2,\alpha}(\Omega_L)} &\leq C_{Sch1}\left(|\psi_n|_{L^\infty(\Omega_L)} + |\mu\phi_n|_{L^\infty(\Omega_L)} \right) \leq 2C_{Sch1} \\
|\mu\phi_n|_{\mathcal C^{2,\alpha}(\mathbb R)} &\leq C_{Sch2}\left(|\psi_n|_{L^\infty(\Omega_L)} + |\mu\phi_n|_{L^\infty(\Omega_L)} \right) \leq 2C_{Sch2} 
\end{alignat*}

\end{prop}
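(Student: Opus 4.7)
The proof follows the structure of Proposition \ref{schauderwentzell}, adapted to the system setting. Two preliminary remarks are in order. First, in \eqref{Seps} the condition on $y=0$ is a classical Robin condition on $\psi$, so the standard $L^p$ and $\mathcal C^{2,\alpha}$ estimates up to an oblique boundary (e.g. Gilbarg-Trudinger, Chapter 6) apply directly, and there is no need to invoke either the Wentzell Schauder estimates of \cite{LT91} or the weak Harnack inequality of \cite{Luo93}. Second, since $\psi_n$ and $\mu\phi_n$ are coupled through the boundary data, a short bootstrap between the two unknowns is required. Throughout, the $1/\varepsilon_n$ factors appearing in the boundary condition and in the ODE are dominated by $1/\varepsilon_0$, which is tacitly absorbed in the constants.

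Step one: from the $L^\infty$ bounds $0<\mu\phi_n<1$ and $0<\psi_n<1$ given by Proposition \ref{bounds}, the Robin data $(\mu\phi_n-\psi_n)/\varepsilon_n$ is uniformly bounded by $2/\varepsilon_0$ and $f(\psi_n)$ by $\sup f$. Standard $W^{2,p}$ estimates up to a Robin boundary, applied on half-balls of fixed radius centred at arbitrary points of $y=0$ and on interior balls, together with Sobolev embedding for $p$ large enough, yield a uniform $\mathcal C^{1,\beta}$ bound on $\psi_n$ on such balls for every $\beta\in(0,1)$. Since the coefficients are translation-invariant in $x$, the constant does not depend on the position of the ball; this upgrades to a global uniform $\mathcal C^{1,\beta}$ bound on $\psi_n$. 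In particular, the trace $\psi_n(\cdot,0)$ is uniformly bounded in $\mathcal C^{1,\beta}(\mathbb R)$ for some fixed $\beta=\alpha\in(0,1)$.

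Step two: the scalar equation
$$-D\phi_n''+c_n\phi_n'+\frac{\mu}{\varepsilon_n}\phi_n = \frac{1}{\varepsilon_n}\psi_n(\cdot,0)$$
is a linear elliptic ODE with coefficients bounded in terms of $D$, $c_{max}$, $\mu$, $1/\varepsilon_0$, with positive zero-order coefficient and with a right-hand side uniformly $\mathcal C^{\alpha}(\mathbb R)$ by step one. The local $\mathcal C^{2,\alpha}$ Schauder estimates for this ODE have translation-invariant constants, and combined with the $L^\infty$ bound $\mu\phi_n\leq 1$ they yield a global uniform $\mathcal C^{2,\alpha}(\mathbb R)$ bound on $\mu\phi_n$, which is the second estimate to prove.

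Step three: armed with the $\mathcal C^{2,\alpha}$ estimate on $\mu\phi_n$, the Robin boundary datum $(\mu\phi_n-\psi_n)/\varepsilon_n$ is uniformly bounded in $\mathcal C^{1,\alpha}$ on $y=0$, and $f(\psi_n)$ is uniformly $\mathcal C^{\alpha}$ (as $f$ is Lipschitz and $\psi_n$ is $\mathcal C^{1,\alpha}$ by step one). Applying Schauder estimates up to the Robin boundary on half-balls of fixed size centred at $y=0$, the Neumann Schauder estimates up to $y=-L$, and the interior Schauder estimates, all with translation-invariant constants, gives the announced uniform $\mathcal C^{2,\alpha}(\Omega_L)$ bound on $\psi_n$. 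The only obstacle worth mentioning is bookkeeping — tracking how the $1/\varepsilon_n$ terms propagate through the bootstrap — and it is harmless because $\varepsilon_n\geq \varepsilon_0$ is fixed for the whole section.
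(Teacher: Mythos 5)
Your bootstrap has a genuine gap in step one, and it is precisely the step where the paper is most careful. You propose to obtain a uniform $\mathcal C^{1,\beta}$ bound on $\psi_n$ directly from $W^{2,p}$ estimates up to the Robin boundary, using only the $L^\infty$ bounds on the Robin datum and on $f(\psi_n)$. But $W^{2,p}$ estimates up to a Robin (or Neumann, or oblique) boundary require the boundary datum to lie in the trace space $W^{1-1/p,p}$, which for $p>2$ embeds into a H\"older space; $L^\infty$ alone does not suffice. A simple counterexample: for $\Delta u=0$ in $\{y>0\}$ with $\partial_y u(\cdot,0)=g\in L^\infty$, the tangential boundary derivative $\partial_x u(\cdot,0)$ is (up to sign) the Hilbert transform of $g$, which is unbounded on $L^\infty$; in particular $u$ need not even be $\mathcal C^1$, let alone $W^{2,p}$, up to the boundary.

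The paper sidesteps this by reordering the bootstrap. It first uses classical ODE theory for $-D\phi_n''+c_n\phi_n'+\mu\phi_n/\varepsilon_n=\psi_n(\cdot,0)/\varepsilon_n$ to obtain a $W^{2,\infty}$, hence $\mathcal C^{1,\alpha}$, bound on $\mu\phi_n$, so that the Robin datum $\mu\phi_n/\varepsilon_n$ is H\"older continuous. It then handles the interior data $f(\psi_n)$ --- still merely bounded at that stage --- by rewriting it as a bounded zero-order coefficient $-f(\psi_n)/\psi_n$ and invoking a De Giorgi/Krylov--Safonov type H\"older estimate up to the mixed boundary (\cite{Lieb}); only then does the Schauder iteration close. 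So a boundary H\"older estimate robust to merely bounded coefficients is still needed, exactly the kind of tool you say is dispensable here. To repair your argument, run your step two first (the ODE with bounded right-hand side already yields $\mu\phi_n\in W^{2,\infty}\subset\mathcal C^{1,\alpha}$ without any regularity on $\psi_n$), then get $\psi_n\in\mathcal C^\alpha$ either by a boundary H\"older estimate or by $W^{2,p}$ theory with the now-H\"older Robin datum, and finally apply your step three. Your remark that the constants carry a tacit dependence on $\varepsilon_0$ through $1/\varepsilon_n\le 1/\varepsilon_0$ --- a dependence the paper omits from the stated constants --- is correct and a worthwhile addendum; it is harmless here but is exactly why the regime $\varepsilon\to 0$ requires the separate analysis of Section~\ref{pertsing}.
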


\begin{proof}
We adapt the proof of Proposition \ref{schauderwentzell}. By classical ODE theory (use Fourier transform or the variation of constants), there exists $C_{ode} = C(D,\mu,c_{max})$ such that
$$|\mu\phi_n|_{\mathcal C^{1,\alpha}} \leq |\mu\phi_n|_{W^{2,\infty}} \leq C_{ode}|\mu\phi_n|_\infty \leq C_{ode}$$
Seeing the right-hand side $f(\psi_n)$ in \eqref{Seps} as $-\frac{f(\psi_n)}{\psi_n}\psi_n$ in the left-hand side, which yields a bounded $0$-order term since $f$ is Lipschitz, we can use the Hölder continuity estimate up to the mixed boundary of \cite{Lieb} and iterate with the classical Schauder estimate up to the Robin boundary (see \cite{GT}, Lemma 6.29) so that on half-balls $B_-$ supported on $y=0$ on a segment $T$ :
$$|\psi_n|_{\mathcal C^{2,\alpha}(B_-)} \leq C_R\left( |\psi_n|_{L^\infty(2B_-)} + C_{ode}|\mu\phi_n|_\infty \right)$$
for some constant $C_R$. Finally we obtain the desired result by plugging the above estimate in standard Schauder estimates for $\phi$ : 
$$|\phi_n|_{\mathcal C^{2,\alpha}(T)} \leq C_{Sch}\left( |\psi_n|_{L^\infty(2T)} + C_{R}\left( |\psi_n|_{L^\infty(2T)} + C_{ode}|\mu\phi|_\infty \right)\right)$$

As before, we obtain the global estimate by covering $\mathbb R \times \Omega_L$ with such $T$ and $B_-$ using that the above estimate holds independently of the position of $B_-$, and other half-balls where standard Schauder estimates up to the Neumann boundary hold.
\end{proof}

Thanks to the previous estimate, as before we extract from $(\phi_n,\psi_n)$ a subsequence still denoted $(\phi_n,\psi_n)$ that converges in $\mathcal C^2_{loc}$ to $\phi_\infty, \psi_\infty$ satisfying $(S_{\varepsilon_\infty})$ except the limiting conditions. We now conclude just as in Propositions \ref{expdecay} and \ref{rightlimit} :

\begin{prop}
$\mu\phi_\infty$ and $\psi_\infty$ satisfy uniformly in $y$
$$\lim_{x\to -\infty} \mu\phi_\infty(x), \psi_\infty(x,y) = 0$$ 
$$\lim_{x\to +\infty} \mu\phi_\infty(x), \psi_\infty(x,y) = 1$$ 
\end{prop}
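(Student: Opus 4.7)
The plan is to mirror the Wentzell arguments of Propositions \ref{expdecay} and \ref{rightlimit}, handling $\mu\phi_\infty$ and $\psi_\infty$ simultaneously via the cooperative structure of \eqref{Seps}.

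For the left limit, I would first exploit the normalization \eqref{condsys}, which together with the monotonicity inherited at the limit forces $\psi_\infty, \mu\phi_\infty \leq \theta$ on $x \leq 0$, so $f(\psi_\infty) \equiv 0$ there. On this half-strip the pair $(\psi_\infty, \mu\phi_\infty)$ satisfies the linear cooperative system obtained by dropping the reaction term, and $(\theta e^{rx},(\theta/\mu)e^{rx})$ is a supersolution provided $cr - dr^2 \geq 0$ and $cr - Dr^2 \geq 0$. Choosing $r = c_{min}/\max(d,D)$ makes this uniform in the whole sequence $(c_n, \phi_n, \psi_n)$. The same comparison argument as in the proof of Proposition \ref{bounds} (strong maximum principle in $\Omega_L$, Hopf at $y=-L$, and at $y = 0$ using the exchange relation with the ODE unknown) applied to the non-negative functions $\theta e^{rx} - \psi_n$ and $\theta e^{rx} - \mu\phi_n$ gives the uniform bound $\psi_n, \mu\phi_n \leq \theta e^{rx}$ on $x\leq 0$. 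Passing to the $\mathcal{C}^2_{loc}$ limit gives the required uniform decay at $-\infty$.

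For the right limit, monotonicity and the $L^\infty$ bounds of Proposition \ref{bounds}, both inherited by the limit, yield pointwise limits $\psi_\infty(x,y) \to \beta(y)$ and $\mu\phi_\infty(x) \to \Phi$ as $x \to +\infty$. Looking at the translates $\psi_\infty^j(x,y) := \psi_\infty(x+j,y)$ on $[0,1]\times[-L,0]$ and applying the Schauder estimate of Proposition \ref{schaudersystem}, Ascoli extraction promotes the convergence to $\mathcal{C}^1$, so $\beta \in \mathcal{C}^1$. The key step is to establish
\[
\int_{\Omega_L} f(\psi_\infty) < +\infty, \qquad \int_{\Omega_L} |\nabla \psi_\infty|^2 < +\infty,
\]
by integrating the $\psi$ equation (respectively $\psi_\infty$ times that equation) on $Q_M = [0,M]\times[-L,0]$ and writing the result as $A(M) - A(0)$. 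The new feature compared to the Wentzell case is that the $y=0$ boundary term involves $(\mu\phi_\infty - \psi_\infty)/\varepsilon_\infty$; I would trade this via the ODE $-D\phi_\infty'' + c_\infty \phi_\infty' = (\psi_\infty - \mu\phi_\infty)/\varepsilon_\infty$ for the bounded expressions $c_\infty\phi_\infty(m)/\mu - D\phi_\infty'(m)/\mu$, controlled by the standard $W^{2,\infty}$ ODE estimate already used in Proposition \ref{schaudersystem}. All terms in $A(m)$ are then bounded in $m$ except possibly $d\int_{-L}^0 \partial_x \psi_\infty(m,y)dy$, which cannot diverge to $-\infty$ as it is the derivative of the bounded function $m \mapsto d\int_{-L}^0 \psi_\infty(m,y)dy$.

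Finiteness of the Dirichlet integral forces $\nabla \beta \equiv 0$, and finiteness of the reaction integral forces $f(\beta) = 0$, so $\beta \in \{0,\theta,1\}$; the normalization and monotonicity exclude $\beta = 0$. To rule out $\beta = \theta$, monotonicity then forces $\psi_\infty \leq \theta$ everywhere, hence $f(\psi_\infty) \equiv 0$, and integrating the $\psi$ equation on all of $\Omega_L$ via the mass identity behind \eqref{formulec} yields $c_\infty(L + 1/\mu)\theta = 0$, contradicting $c_\infty \geq c_{min} > 0$. Hence $\beta \equiv 1$, and passing the boundary condition $d\partial_y \psi_\infty = (\mu\phi_\infty - \psi_\infty)/\varepsilon_\infty$ to $x \to +\infty$ using the $\mathcal{C}^1$ convergence gives $\mu\Phi = 1$. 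The principal obstacle, relative to the Wentzell proof, is the coupled accounting of boundary fluxes on $y = 0$: the exchange term is only controllable after being rewritten using the $\phi$ equation, which requires the ODE estimate to survive in the limit.
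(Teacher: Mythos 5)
Your proof follows the paper's approach exactly: the left limit by the exponential supersolution comparison of Proposition \ref{expdecay}, and the right limit by the integration-by-parts scheme of Proposition \ref{rightlimit} and Lemma \ref{integrals}, with the sole new ingredient being precisely the one the paper also identifies, namely that the $y=0$ boundary flux $(\mu\phi_\infty-\psi_\infty)/\varepsilon_\infty$ is traded via the ODE for $c_\infty\phi_\infty - D\phi_\infty'$, which is bounded. Your concluding step of passing the exchange condition to the limit to deduce $\mu\Phi = 1$ is a clean way of finishing that the paper leaves implicit.

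One small slip in the last paragraph: $f(\beta)=0$ under Assumption~\ref{asf} gives $\beta\in[0,\theta]\cup\{1\}$, not $\beta\in\{0,\theta,1\}$, so excluding only $\beta=0$ does not close the case. The normalisation and monotonicity in fact give $\beta\geq\theta$ (since $\max_y\psi_\infty(0,y)=\theta$ and $\psi_\infty$ is nondecreasing in $x$), which reduces the possibilities to $\beta\in\{\theta,1\}$ as you intended; you should state the argument in that form.
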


\begin{proof}
For the left limit, just observe that thanks to condition \eqref{condsys}, \eqref{compgauche} still holds for both $\mu\phi_\varepsilon$ and $\psi_\varepsilon$.
For the right limit, the computations of Proposition \ref{rightlimit} still hold : the only difference is that the boundary term $$\frac{s_\infty}{\mu} \int_0^M \left(-D\partial_{xx}\psi^\infty(x,0) + c_\infty \partial_x \psi^\infty(x,0)\right) dx$$ should be replaced here by $$\int_0^M \left(-D\partial_{xx}\phi^\infty(x) + c_\infty \partial_x\phi^\infty(x) \right) dx$$ which is treated in the exact same way.
\end{proof}

\subsection{$P_{\varepsilon_0}$ is open}
\label{pepsopen}
To simplify the notations, we note $M = 1/\varepsilon$ and we search around a solution $(c^0,\phi_0,\psi_0)$ for $M=M_0$, a solution $c = c^0 + (M-M_0)c^1, \phi = \phi_0 + (M-M_0)\phi_1, \psi = \psi_0 + (M-M_0) \psi_1$. The equations on $c^1,\phi_1,\psi_1$ are

\begin{equation}
\begin{tikzpicture}
\draw (-7,0) -- (7,0) node[pos=0.5,below] {\small{$E(\phi^1,\psi^1) = (M-M_0)(\mu\phi^1 - \psi_1) + \mu\phi_0 - \psi_0 $}} node[pos=0.5,above] {$l(\phi^1,\psi^1) =  (M-M_0)(\phi_1 - \mu\psi_1) - c^1\phi_0' - (M-M_0)c^1\phi_1' + \psi_0 - \mu\phi_0$} ;

\node at (0,-1.5) {$\mathcal L\psi_1 + c^1\partial_x\psi_0 = R(M-M_0,c^1,\psi_1)$};

\draw (-7,-3) -- (7,-3) node[pos=0.5,above] {\small{$\partial_y\psi = 0$}};

\end{tikzpicture}
\end{equation}
where \begin{alignat*}{1}
&l(\phi,\psi) = -D\phi'' + c^0\phi' - M_0(\phi - \mu\psi) \\ 
&\mathcal L\psi = -d\Delta \psi + c^0\partial_x  \psi - f'(\psi^0)\psi \\
&E(\phi,\psi) = d\partial_y \psi - M_0(\mu\phi - \psi) 
\end{alignat*}

The functional setting will be $r=\min(\frac{c_{min}}{D}, \frac{c_{min}}{d})$, $w(x)=e^{rx}$ on the whole real line (we will see later why we need to take the exponential everywhere instead of connecting it with a constant like before) $$X=\mathcal C^{2,\alpha}_w(\mathbb R) \times \mathcal C^{2,\alpha}_w(\Omega_L)$$ and we will work with the operator from $X$ to $Y = \mathcal C^{\alpha}_w(\mathbb R) \times \mathcal C^{\alpha}_w(\Omega_L)$ 

$$\mathscr{L}(\phi,\psi) := (l(\phi,\psi), \mathcal L\psi)$$
endowed with the exchange condition $E(\phi,\psi) = 0$ on $y=0$ and the Neumann condition $-d\partial_y\psi = 0$ on $y=-L$. 

Treating the boundary as usual in the system case, we can obtain the same properties as in Lemma \ref{lemJMR}, with this time $N(\mathscr L)$ generated by $(\phi_0',\partial_x\psi^0)$ :
\begin{lem}
\label{lemJMRsys}
$\mathscr L: X\to Y$ is a Fredholm operator of index $0$. As a consequence, the following decompositions hold 
$$X=N(\mathscr L)\oplus X_1$$
$$Y=R(\mathscr L)\oplus Y_2$$
where $X_1 \simeq R(\mathcal L)$  is a closed subspace of $X$ and $Y_2 \simeq N(\mathcal L)$. Moreover
$$N(\mathscr L) = N(\mathscr L^2) = \mathbb R (\phi_0',\partial_x{\psi_0})$$
\end{lem}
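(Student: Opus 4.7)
The strategy is to adapt, block by block, the proof of Lemma \ref{lemJMR}: establish the Fredholm property by writing $\mathscr L$, after conjugation with the weight, as an invertible operator plus a compact perturbation; identify the kernel by a sliding argument based on the $x$-translation invariance of \eqref{Seps}; deduce $N(\mathscr L) = N(\mathscr L^2)$ by applying the same sliding argument to $\mathscr L(\phi,\psi) = (\phi_0',\partial_x\psi_0) > 0$. The absence of a Wentzell term here actually simplifies several estimates compared to the scalar case of Section \ref{propL}.

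For the Fredholm part, I would conjugate by setting $\tilde{\mathscr L}(\tilde\phi,\tilde\psi) = \big(w_1 l(w\tilde\phi, w\tilde\psi),\ w_1 \mathcal L(w\tilde\psi)\big)$ together with the conjugated boundary conditions on $y=0,-L$, reducing matters to Fredholmness of index $0$ of $\tilde{\mathscr L}: \mathcal C^{2,\alpha}(\mathbb R) \times \mathcal C^{2,\alpha}(\Omega_L) \to \mathcal C^\alpha(\mathbb R) \times \mathcal C^\alpha(\Omega_L)$. The choice $r=\min(c_{min}/D, c_{min}/d)$ guarantees that the conjugation produces non-negative zero-order contributions $-Dr^2+c^0 r \geq 0$ and $-dr^2+c^0 r \geq 0$ in both equations, while the exchange relation $E(\phi,\psi)=0$ becomes a cooperative Robin-type coupling between $\tilde\phi$ and $\tilde\psi$ on $y=0$. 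Split $\tilde{\mathscr L}=\tilde{\mathscr T}+\tilde{\mathscr S}$, with $\tilde{\mathscr S}$ a multiplication operator whose coefficients vanish on $x\leq 0$ and uniformly in $y$ as $x\to+\infty$, chosen so that $\tilde{\mathscr T}$ has diagonal zero-order coefficient bounded below by some $\gamma_0>0$. Invertibility of $\tilde{\mathscr T}$ between the Hölder spaces above follows from the existence and uniqueness theory for cooperative linear elliptic systems with Robin/Neumann boundary conditions (a direct vectorial extension of \cite{GT}, Thm 6.31). Compactness of $\tilde{\mathscr S}\tilde{\mathscr T}^{-1}$ and $\tilde{\mathscr T}^{-1}\tilde{\mathscr S}$ comes from the Ascoli/diagonal extraction argument already used in Section \ref{propL}, since $\tilde{\mathscr S}$ is concentrated in $x$.

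Differentiating \eqref{Seps} in $x$ shows that $(\phi_0',\partial_x\psi_0) \in N(\mathscr L)$. To see this generates the kernel, I would run the sliding argument of Section \ref{propL} on both components simultaneously: for $(\phi,\psi)\in N(\mathscr L)$, set
$$P = \{\Lambda \in \mathbb R \mid \forall\, \lambda < \Lambda,\ \phi > \lambda\phi_0'\ \text{and}\ \psi > \lambda\partial_x\psi_0\}.$$
On a truncated rectangle large enough so that $-f'(\psi_0)\geq 0$ outside, the cooperative structure (coupling only through $E$) together with the strong maximum principle and Hopf lemma on $y=0$ and $y=-L$ gives that $P$ is non-empty, bounded above, and that with $\lambda_0 := \sup P$, one has the alternative: either $(\phi,\psi)\equiv \lambda_0(\phi_0',\partial_x\psi_0)$, or both components of $(\phi-\lambda_0\phi_0',\psi-\lambda_0\partial_x\psi_0)$ are strictly positive; the compactness argument of Lemma \ref{weightedlinf} rules out loss of positivity as $x\to+\infty$, so the second alternative contradicts the maximality of $\lambda_0$. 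The same sliding trick applied to $(\phi,\psi) \in N(\mathscr L^2)$ with $\mathscr L(\phi,\psi) = \alpha(\phi_0',\partial_x\psi_0)$, $\alpha \neq 0$, normalised to $\alpha=1$, delivers the same contradiction and yields $N(\mathscr L)=N(\mathscr L^2)$.

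The main obstacle is ensuring that the maximum principle, Hopf lemma, and the compactness-at-infinity argument of Lemma \ref{weightedlinf} accommodate the exchange boundary condition $E(\phi,\psi)=0$, which couples the two unknowns on $y=0$ rather than in the interior. This is the one place where the scalar proof of Lemma \ref{lemJMR} must be genuinely rewritten instead of copied. The key point is that the coefficient of $\phi$ in $E$ has the correct (cooperative) sign, so that a non-positive minimum of one component on $y=0$ propagates to a non-positive extremum of the other via the Hopf lemma, exactly as in Case 3 of the proof of Lemma \ref{fonda}. Once this is in place, the remainder of the argument is a routine transcription of Section \ref{propL}.
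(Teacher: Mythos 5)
Your kernel computation is correct and mirrors the paper's (which says only that the sliding argument of Section \ref{propL} ``still holds'' once the exchange boundary condition is handled via the cooperative Hopf argument of Case 3 in Lemma \ref{fonda}). The Fredholm half, however, has a genuine gap, and the paper in fact flags the very point you miss when it introduces, for this section, $w(x)=e^{rx}$ on the \emph{whole} real line rather than the weight of Definition \ref{defw} that is constant for $x>1$. The difficulty is the $L^\infty$ a priori bound for the model operator $\tilde{\mathscr T}$. With the old weight, for $x>1$ the conjugated road equation is unchanged, $-D\tilde\phi'' + c_0\tilde\phi' + \mu\tilde\phi - \tilde\psi = g$, and the exchange condition reads $d\partial_y\tilde\psi = \mu\tilde\phi - \tilde\psi$. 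If $\tilde\psi$ attains its max on the road at some $x>1$, Hopf gives only $\max\tilde\psi \leq \mu\max\tilde\phi$, while the $\phi$-equation at a max of $\tilde\phi$ gives $\mu\max\tilde\phi - \max\tilde\psi \leq |g|_\infty$; combining yields $0\leq|g|_\infty$, which is vacuous. In the scalar case this loop never forms because the Wentzell condition is absorbing at an extremum (there $\partial_{xx}\psi\leq 0$ and $\partial_x\psi=0$ force $\partial_y\psi\leq 0$), so your remark that ``the absence of a Wentzell term actually simplifies several estimates'' is backwards. The fix is the full exponential weight, which injects the coercive term $\alpha_r=-Dr^2+c_0 r>0$ into the road equation for \emph{all} $x$; see \eqref{Ttilde}, which then yields $\max\tilde\psi \leq \tfrac{\mu}{\alpha_r}|g|_\infty$.

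Two further consequences of this weight change are also missing from your argument. First, since $w$ now grows as $x\to+\infty$, one must verify that the weighted solutions of the tilded problem still come back to the untilded space, i.e.\ that they decay like $e^{-rx}$; the paper establishes this by comparing with the explicit supersolution $(\tfrac{A}{\mu}e^{-rx}, Ae^{-rx})$ on $x>1$. Second, the invertibility of $\tilde{\mathscr T}$ cannot be outsourced to a ``direct vectorial extension of \cite{GT}, Thm 6.31'' --- the paper says explicitly that no such existence theorem for this coupled linear problem is available, and instead proves surjectivity by the closed-range property (a consequence of the a priori estimate together with interior/boundary regularity) followed by a duality argument against the formal adjoint.
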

\begin{proof}
This proof is postponed in section \ref{propLsystem} to lighten this section. It relies on the same arguments as Lemma \ref{lemJMR}, up to the subtlety of the system case. These technicalities are the reason why we chose $w(x) = e^{rx}$ everywhere. Observe that the exponential growth of $w$ as $x\to+\infty$ adds a difficulty in proving the Fredholm property : we have to prove that the invertible part of $\mathscr L$ yields bounded solutions.
\end{proof}

In order to work with this fixed problem, we have to kill the non-homogeneities and the small terms, so as before we look for solutions with form $$\phi^1 = \tilde{\phi}(M,c^1,\phi,\psi) + \phi$$ $$\psi^1 = \tilde{\psi}(M,c^1,\phi,\psi) + \psi$$ where $\tilde{\phi}, \tilde{\psi}$ solves, for $A$ large enough,

\begin{equation}
\begin{tikzpicture}
\draw (-7.5,0) -- (7.5,0) node[pos=0.5,below] {\small{$d\partial_y\tilde\psi - M(\mu\tilde\phi - \tilde\psi)  = (M-M_0)(\mu\phi - \psi) + \mu\phi_0 - \psi_0 $}} node[pos=0.5,above] {$-D\tilde\phi'' + c_0\tilde\phi' - M(\tilde\psi - \mu\tilde\phi) + A\tilde{\phi} = (M-M_0)(\psi-\mu\phi) - c^1\phi_0' - (M-M_0)c^1\phi' + \psi_0 - \mu\phi_0$} ;

\node at (0,-1.5) {$(-d\Delta + c_0\partial_x - f'(\psi^0) + A)\tilde{\psi} = 0$};

\draw (-7.5,-3) -- (7.5,-3) node[pos=0.5,above] {\small{$\partial_y\psi = 0$}};

\end{tikzpicture}
\end{equation}

\begin{lem}
Such a function $(\tilde\phi, \tilde\psi)$ exists and satisfies $$(\tilde\phi,\tilde\psi)\in \mathcal C^1 \left(\mathbb R \times \mathbb R \times X; X\right)$$
\end{lem}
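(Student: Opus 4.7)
The plan is to adapt the strategy of Lemma \ref{lemmec2alphaw} to the system setting. I would proceed in three stages: first construct $(\tilde\phi,\tilde\psi)$ as a classical solution in standard H\"older spaces, then upgrade it to the weighted space $X$ through a maximum principle argument, and finally deduce the $\mathcal{C}^1$ dependence on parameters from linearity.

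For the classical existence, choose $A > |f'(\psi^0)|_\infty$ so that both zeroth-order coefficients are strictly positive: $A-f'(\psi^0)>0$ for the PDE on $\tilde\psi$ and $A + \mu M > 0$ for the ODE on $\tilde\phi$. The homogeneous version of the modified system then has only the trivial solution by the same maximum principle / Hopf argument as in Proposition \ref{bounds} (a nontrivial extremum is ruled out in the interior by the sign of the 0-th order term, on $y=-L$ by Hopf, and on $y=0$ by combining Hopf with the algebraic exchange condition $E(\tilde\phi,\tilde\psi)=0$, exactly like in Lemma \ref{phipsi}). Combined with Schauder estimates up to the Robin boundary (\cite{Lieb} and \cite{GT}, Lemma 6.29) and the Fredholm alternative, this yields a unique classical solution $(\tilde\phi,\tilde\psi)\in \mathcal C^{2,\alpha}(\mathbb{R})\times\mathcal C^{2,\alpha}(\overline{\Omega_L})$ as soon as the right-hand side data lies in $\mathcal C^\alpha$, which is automatic because $\phi,\psi\in X$ are locally $\mathcal C^{2,\alpha}$ and $\phi_0,\psi_0$ are smooth.

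The hard step is to show that $(\tilde\phi,\tilde\psi)\in X$, i.e.\ that $w_1\tilde\phi$ and $w_1\tilde\psi$ are bounded. By the weighted Schauder argument of Proposition \ref{schaudersystem} applied to the conjugated unknowns $(w_1\tilde\phi, w_1\tilde\psi)$, it suffices to establish the $L^\infty$ estimate. This is where the genuine novelty compared to Lemma \ref{lemmec2alphaw} appears: the weight $w(x)=e^{rx}$ is no longer bounded as $x\to+\infty$, so comparison with $Kw$ must be carried out on both halves of the real line. On $x\leq 0$ the choice $r=\min(c_{min}/D,c_{min}/d)$ of Section \ref{pepsopen} makes $Kw$ a supersolution of each individual equation since $-dr^2+c^0r\geq 0$ and $-Dr^2+c^0r\geq 0$; on $x\geq 0$ the same comparison works because the right-hand side grows at most like $e^{rx}$ (as $\phi,\psi\in X$) while the coercive $+A$ term controls the zeroth-order contribution. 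I expect this to be the main obstacle, precisely for the reason flagged in Lemma \ref{lemJMRsys}: the invertible perturbation must produce solutions that do not grow faster than $w$ at $+\infty$. The contradiction is reached via the same case distinction as in Lemma \ref{weightedlinf}: an interior negative minimum is forbidden by the strong maximum principle and the sign of the zeroth-order term, a minimum on $y=-L$ by Hopf, and a minimum on $y=0$ by combining Hopf with the exchange condition as in Lemma \ref{phipsi}. The infimum-at-infinity case is treated by the same translation and compactness argument already used in Lemma \ref{weightedlinf}, using local elliptic estimates to extract a limiting profile and reach a contradiction with the coercivity.

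Finally, for the $\mathcal C^1$ dependence on $(M,c^1,\phi,\psi)\in\mathbb{R}\times\mathbb{R}\times X$, the right-hand side of the modified system is affine and smooth in these parameters in the weighted norm of $Y$, and the operator $\mathscr{L}+A\,\mathrm{Id}$ depends affinely on $M$ with a bounded inverse between the weighted spaces (by the estimate just obtained). Hence $(\tilde\phi,\tilde\psi)$ is a composition of $\mathcal C^\infty$ and $\mathcal C^1$ maps, which is $\mathcal C^1$ from $\mathbb{R}\times\mathbb{R}\times X$ into $X$, concluding the proof.
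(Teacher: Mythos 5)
Your proposal follows essentially the same route as the paper: the paper's own proof is a short pointer ("solvability is in Section \ref{propLsystem}, weighted bound by repeating Lemma \ref{weightedlinf} treating the boundary as usual in the system case"), and your three-stage plan — classical existence, upgrade to $X$ via weighted Schauder plus a weighted $L^\infty$ bound, and $\mathcal{C}^1$ dependence by affineness of the right-hand side — is exactly the intended expansion of that sketch. You correctly identify both the new difficulty (the weight $w=e^{rx}$ is unbounded as $x\to+\infty$, so the comparison must be done on both sides and the $\Omega_L^+$ part of Lemma \ref{weightedlinf} is no longer free) and the correct toolkit (Hopf lemma combined with the exchange identity as in Lemma \ref{phipsi}, translation-compactness for infima attained at infinity).

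One place where you deviate from the paper, and where your shortcut is not fully justified as written, is the classical-existence step. You invoke a maximum-principle uniqueness together with "the Fredholm alternative"; but the domain $\Omega_L$ is an unbounded strip, so there is no compact embedding of Hölder spaces to make the Fredholm alternative available off the shelf. The paper avoids this: Section \ref{propLsystem} proves solvability of the coercive problem directly, via an $L^\infty$ a priori estimate (which gives injectivity and closed range) followed by a duality argument through the formal adjoint to obtain surjectivity. So the conclusion you want is true, but the Fredholm alternative is the wrong name for the mechanism and hides the real work; you should either cite the closed-range–plus–duality argument of Section \ref{propLsystem}, or exhaust on increasing bounded domains with the uniform $L^\infty$ bound you already have. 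Apart from that, the remaining steps — reducing to the boundedness of $w_1\tilde\phi, w_1\tilde\psi$, and the $\mathcal{C}^1$ dependence — match the paper's intent.
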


\begin{proof}
See Section \ref{propLsystem} for the solvability of this equation provided $A$ large enough. The fact that $\tilde\phi,\tilde\psi$ are not only $\mathcal C^{2,\alpha}$ but $\mathcal C^{2,\alpha}_w$ is shown just as before : thanks to the Schauder type estimate as in Proposition \ref{schaudersystem}, it suffices to show that $w_1\tilde\psi$ and $w_1\tilde\phi$ are bounded. For this, repeat the proof of Lemma \ref{weightedlinf} but treating the boundary as usual in the system case.
\end{proof}

Thus we are left with the following problem to solve in $c^1\in\mathbb R$, $(\phi,\psi) \in X$ : 
\begin{equation}
\label{syshomcont}
\mathscr L(\phi,\psi) + c^1(\phi_0',\partial_x\psi^0) = (r,R) - \mathscr L(\tilde\phi,\tilde{\psi})
\end{equation}
As before, applying the projection $\mathscr P$ onto $Y_2$ on \eqref{syshomcont} yields an equation on $c^1$, and applying $\mathscr Q = Id - \mathscr P$ yields an equation on the image part of the decomposition of  $(\phi,\psi) = \Lambda(\phi_0',\partial_x\psi^0) + (\phi_R,\psi_R) \in X$, $\Lambda \in \mathbb R$ being free in all this procedure.  The set of equation obtained is 
\begin{alignat}{1}
& c^1 = \mathscr P((r,R) - \mathscr L(\tilde\phi,\tilde{\psi})) \label{c1sys} \\
& \mathscr L(\phi_R,\psi_R) = \mathscr Q((r,R) - \mathscr L(\tilde\phi,\tilde{\psi}))\label{vRsys}
\end{alignat}

For $M>M_0$, the auxiliary functions depend on $c^1, \phi$ and $\psi$ so this system is non-linear and coupled, but at $M=M_0$, we have $r=R \equiv 0$, and the auxiliary functions depend only on $\phi_0,\psi_0$, so in this case the system, as before, can be solved step by step. Moreover, since here $c$ does not appear in the boundary condition, we do not need the duality argument of the previous section : \eqref{c1sys} is trivially solvable.

Finally, as before the differential of this system of equations with respect to $c_1, (\phi_R,\psi_R) \in \mathbb R \times X_1$  at $M=M^0$ and the corresponding solutions yields an isomorphism since $\mathscr L$ is invertible on $X_1$, and the implicit function theorem provides for $M$ close to $M^0$ a solution of \eqref{Seps} apart from the limiting conditions.

The right-limit condition is then obtained by an adaptation of the computations of Proposition \ref{rightlimitcd}. We wish to emphasise on the fact that even though $w$ has exponential growth as $x\to +\infty$, $\phi^1, \psi^1$ are indeed bounded, as highlighted in Lemma \ref{lemJMRsys}.

\begin{prop}
\label{rightlimitcdsys}
Let $$c = c^0 + (M-M^0)c^1,\ \phi = \phi_0 + (M-M^0)\phi^1,\ \psi = \psi_0 + (M-M^0)\psi^1$$ If $M - M^0$ is small enough, we have $$\lim_{x\to+\infty} \mu\phi(x), \psi(x,y) = 1$$ uniformly in $y$.
\end{prop}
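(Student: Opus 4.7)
The plan is to adapt the three-step approach of Proposition \ref{rightlimitcd} to the coupled system: first establish $\mu\phi,\psi<1$, then $\mu\phi,\psi>1-\varepsilon$ on $\{x\geq a\}$ for some $a$ and $\varepsilon$ small, and finally upgrade this to a uniform limit via an exponentially decaying supersolution of the system linearised at $(1/\mu,1)$.

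For the first step, Lemma \ref{lemJMRsys} gives boundedness of $\phi^1,\psi^1$, hence of $\phi,\psi$, and Proposition \ref{bounds}'s argument excludes $\mu\phi>1$ or $\psi>1$, the possibility that a supremum is only reached at $+\infty$ being handled by the translation-compactness argument of Lemma \ref{weightedlinf}. For the second step, uniform convergence $\mu\phi_0,\psi_0\to1$ at $+\infty$ yields $a$ with $\mu\phi_0,\psi_0>1-\varepsilon/2$ on $x\geq a$, and shrinking $|M-M^0|$ absorbs $(M-M^0)(\mu\phi^1,\psi^1)$ into the remaining $\varepsilon/2$ thanks to the uniform $L^\infty$-bounds just invoked.

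The core step is the construction of the supersolution. Setting $\tilde\phi=1/\mu-\phi$, $\tilde\psi=1-\psi$ and $k=-f'(1)>0$, Taylor expansion shows that on $x\geq a$, $(\tilde\phi,\tilde\psi)$ is nonnegative and, for $\varepsilon$ small enough, satisfies the cooperative linear inequalities
\begin{align*}
-D\tilde\phi''+c\tilde\phi'+M\mu\tilde\phi &= M\tilde\psi(\cdot,0),\\
-d\Delta\tilde\psi+c\partial_x\tilde\psi+\tfrac{k}{2}\tilde\psi &\leq 0,\\
d\partial_y\tilde\psi+M\tilde\psi &= M\mu\tilde\phi \ \text{on } y=0,
\end{align*}
together with $\partial_y\tilde\psi=0$ on $y=-L$. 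I would then seek a supersolution of the form $(U,V)=(Ae^{-\gamma x},\,Be^{-\gamma x}\cosh(\beta(y+L)))$: the Neumann condition at $y=-L$ is automatic, and the three remaining conditions reduce to $A(M\mu-D\gamma^2-c\gamma)\geq MB\cosh(\beta L)$, $d\beta^2\leq k/2-d\gamma^2-c\gamma$, and $B(d\beta\sinh(\beta L)+M\cosh(\beta L))\geq M\mu A$. At $\gamma=0$ with $\beta=\sqrt{k/(2d)}$ and $A=B\cosh(\beta L)/\mu$, the first two are equalities and the third has strict slack $Bd\beta\sinh(\beta L)>0$; by continuity the system remains solvable for some $\gamma>0$ small, yielding an exponentially decaying pair $(U,V)$.

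Scaling $(U,V)$ so that $U(a)\geq\tilde\phi(a)$ and $V(a,\cdot)\geq\tilde\psi(a,\cdot)$, I would finish by applying the monotone-system maximum principle of Lemma \ref{fonda} to $(P,Q):=(U-\tilde\phi,V-\tilde\psi)$ on $x\geq a$. A putative negative infimum of $P$ or $Q$ is ruled out case by case: an interior minimum of $Q$, or one on $y=-L$, contradicts the strong maximum principle or Hopf's lemma; a minimum $Q(x_0,0)=m<0$ at $y=0$ forces, via the coupling and Hopf, $P(x_0)<m/\mu$; a minimum of $P$ at some $x_1$ forces, via the ODE test at a minimum, $Q(x_1,0)\leq\mu P(x_1)$, yielding the arithmetic contradiction $\mu\min P<\min Q\leq\mu\min P$; an infimum attained only at $+\infty$ is reduced to the attained case via translation-compactness as in Lemma \ref{weightedlinf}. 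Hence $\tilde\phi,\tilde\psi$ decay exponentially, giving the uniform limit. The main obstacle is the supersolution construction: the scalar Wentzell spectral condition used in Proposition \ref{rightlimitcd} is here replaced by a coupled compatibility system at $y=0$ mixing both unknowns.
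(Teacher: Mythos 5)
Your proposal is correct and follows essentially the same strategy as the paper: reuse the three steps of Proposition \ref{rightlimitcd} (show $\mu\phi,\psi<1$, get $\mu\phi,\psi>1-\varepsilon$ on $x\ge a$ using boundedness of $\phi^1,\psi^1$ from Lemma \ref{lemJMRsys}, then compare with an exponentially decaying supersolution of the linearised cooperative system with the $(e^{-\gamma x},e^{-\gamma x}\cosh(\beta(y+L)))$ ansatz). The only cosmetic difference is in how the decay rate $\gamma>0$ is produced: the paper imposes equality in all three compatibility conditions and solves the resulting scalar equation $D\gamma^2+c\gamma=\mu d\beta(\gamma)/(1+\tanh(\beta(\gamma)L))$ by an intermediate-value argument, whereas you keep slack in the third condition at $\gamma=0$ and perturb by continuity — an equivalent and equally valid way to obtain the same supersolution.
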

\begin{proof}
By treating the upper boundary as usual in the system case, the arguments of Proposition \ref{rightlimitcd} hold. The only thing to check is the existence of another supersolution with exponential decrease in this case. We look for a solution of the type $(e^{-\gamma x}, e^{-\gamma x}h(y))$. The equations on $\gamma > 0, h > 0$ are 

\begin{alignat}{1}
- h'' + \left(-\frac{1}{2d}f'(1) - \gamma\left(\gamma + \frac{c}{d}\right)\right)h &= 0 \\
dh'(0) &= \mu - h(0) \label{eqA} \\
-h'(-L) &= 0 \\
-D\gamma^2 - c\gamma &= h(0) - \mu \label{eqgammasys}
\end{alignat}
Since $f'(1) < 0$, this can be solved by 
$$h(y) = A\cosh\left(\beta(\gamma)\left(y+L\right)\right)$$
where $$\beta(\gamma) = \sqrt{-f'(1)/(2d)-\gamma(\gamma+c/d)}$$
Moreover, equation \eqref{eqA} gives that 
$$ A = \dfrac{\mu}{d\beta(\gamma)\sinh(\beta(\gamma)L) + \cosh(\beta(\gamma)L)}$$
which, plugged in equation \eqref{eqgammasys} yields the equation on $\gamma$ : 
$$D\gamma^2 + c\gamma = \dfrac{\mu d \beta(\gamma)}{1 + \tanh(\beta(\gamma) L)}$$
which has a solution $0 < \gamma < \gamma_{lim}$ for the same reasons as in Proposition \ref{rightlimitcd}.
    \begin{figure}[h!]
    \label{figuregammasys}
    \centering
    \vspace{-30pt}

    \includegraphics[width=0.5\linewidth]{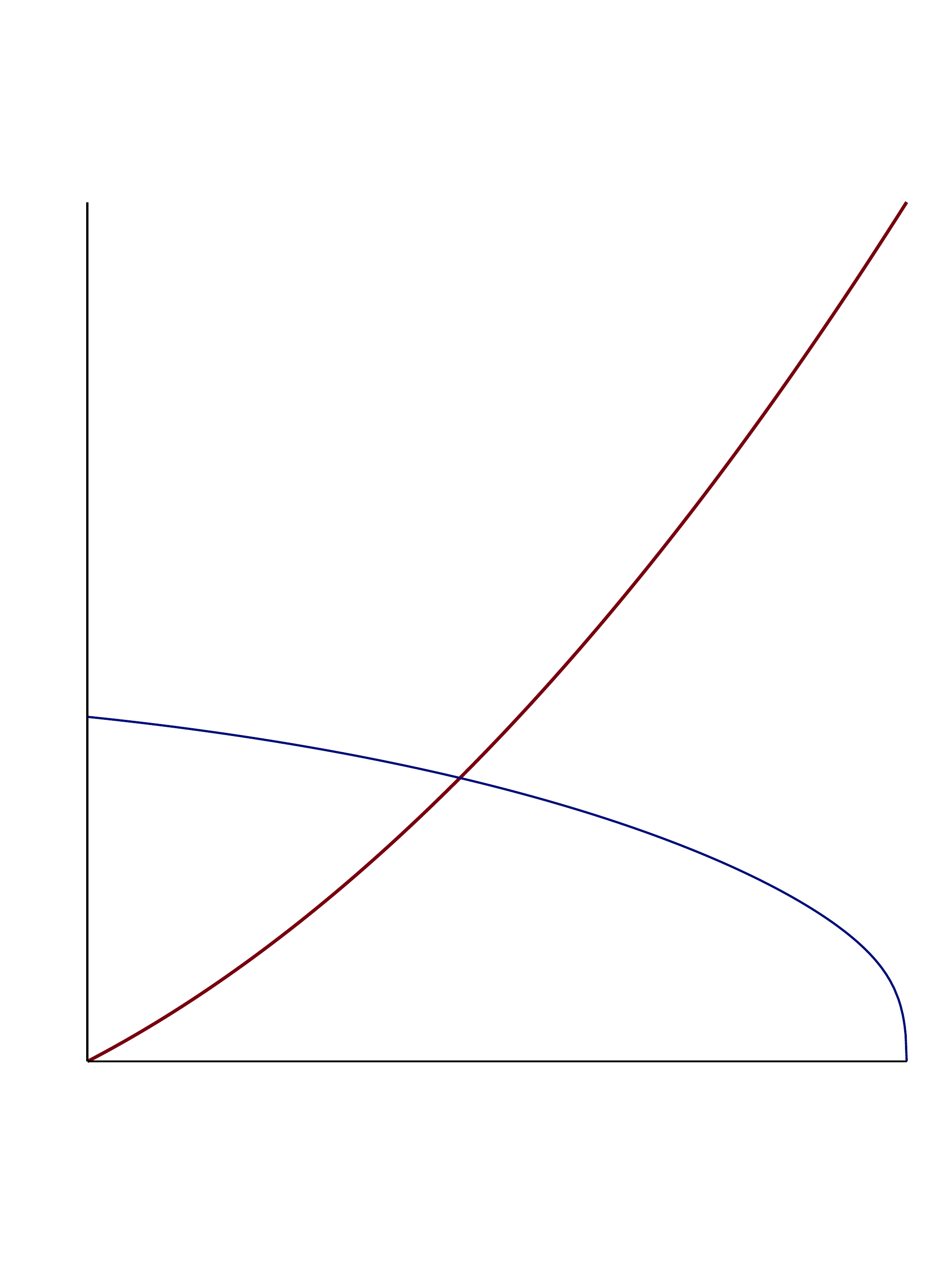}
    \rput(-7.6,1.4){$0$}
    \rput(-0.4,1.4){$\gamma_{lim}$}
    
    \rput(-1.3,5.5){$\frac{\mu d \beta(\gamma)}{1 + \tanh(\beta(\gamma) L)}$}
    \rput(2.5,6.6){$D\gamma^2 + c\gamma$}
    \vspace{-60pt}
    
    \caption{Equation \eqref{eqgammasys} on $\gamma$}
    \end{figure}

\end{proof}

\subsection{Proof of Lemma \ref{lemJMRsys}}
\label{propLsystem}
Throughout all this section, in order to simplify the notations, we have taken $M_0 = 1$ without loss of generality. In this section we show that $\mathscr L$ is Fredholm of index $0$ on $X$, and that $N(\mathscr L) = N(\mathscr L^2)$ is generated by $(\phi_0',\partial_x\psi_0)$.
The proof of the second property does not change : $(\phi_0',\partial_x\psi_0)$ is indeed a solution of the problem, and by treating the boundary condition as usual in the system case, the proof of lemma \ref{lemJMR} as in section \ref{propL} still holds. 
The proof of the Fredholm property on the other hand changes a bit, since we did not take the usual weight but the exponential weight on the whole real line.  This is because of the exchange condition : suppose we had take the usual weight, and did all the machinery $\tilde{\mathscr L} = \tilde{\mathscr T} + \tilde{\mathscr S}$. Then we would not be able to show that $\tilde{\mathscr T}$ is invertible. Indeed, suppose we want to solve $\tilde{\mathscr T}(\phi,\psi) = (g,h) \in \mathcal C^\alpha(\mathbb R)\times \mathcal C^{\alpha}(\Omega_L)$. In order to obtain that $\tilde{\mathscr T}$ is injective (and that its inverse is bounded if it exists), we want to control $(\phi,\psi)$ by the data $(g,h)$, by starting with the $L^\infty$ norm. So suppose $\psi$ reaches a maximum somewhere. Then if it is on the road and on $x > 1$, we have a problem. Indeed, the Hopf lemma only gives $\psi < \mu\phi$ and then looking the equation on the road gives nothing : that is why we want to pull a bit the $0$-order 
coefficient on the road, and that is why we have chosen $w(x) = e^{rx}$ everywhere, so that $\tilde{\mathscr T}(\phi,\psi) = (g,h)$ is no more than

\begin{equation}
\label{Ttilde}
\begin{tikzpicture}
\draw (-6,0) -- (6,0) node[pos=0.5,below] {\small{$d\partial_y\psi = \mu\phi -\psi$}} node[pos=0.5,above] {$-D\phi'' + (c_0-2Dr)\phi' + (\mu+\alpha_r)\phi - \psi = g$} ;

\node at (0,-1.5) {$(-d\Delta + (c_0-2dr)\partial_x +\gamma(x))\psi = h$};

\draw (-6,-3) -- (6,-3) node[pos=0.5,above] {\small{$\partial_y\psi = 0$}};

\end{tikzpicture}
\end{equation}
with $\alpha_r = -Dr^2 + c_0r > 0$. In this setting, a maximum point of $\psi$ reached on the road is no more a problem, we always have that $\psi < \frac{\mu}{\alpha_r}|g|_\infty$ in this case, and actually, in every case

$$ | \psi |_\infty \leq \frac{1}{\min \gamma} |h|_\infty + \frac{\mu}{\alpha_r}|g|_\infty $$

$$ |\phi |_\infty \leq \frac{|g|_\infty + |\psi|_\infty}{\mu + \alpha_r} \leq \frac{1}{\min \gamma(\mu + \alpha_r)} |h|_\infty +  \frac{1}{\alpha_r} |g|_\infty$$

For the surjectivity, unlike before, the literature does not give any existence theorem for such a linear problem, so we have to do it by ourselves : just observe that the estimate above gives that $\tilde{\mathscr T}$ has closed range. Indeed, if $\tilde{\mathscr T}(\phi_n,\psi_n) = (g_n,h_n)$ and $(g_n,h_n)$ converges in $\mathcal C^\alpha$ to a $(g,h)$, then by the above estimate and the Cauchy criteria, $(\phi_n,\psi_n)$ converges uniformly to a bounded continuous $(g,h)$. But also $\tilde{\mathscr T}(\phi_n,\psi_n)$ is bounded in $\mathcal C^\alpha$, so by regularity as in Proposition \ref{schaudersystem}, $(\phi_n,\psi_n)$ converges up to extraction and diagonal process in $\mathcal C^{2,\beta}$. Uniqueness of the limit implies that $(\phi,\psi)$ is indeed $\mathcal C^{2,\beta}$ and the convergence holds in the $\mathcal C^{2,\beta}$ sense. Finally, passing to the limit we get $\tilde{\mathscr T}(\phi,\psi) = (g,h)$ and $(\phi,\psi) \in \mathcal C^{2,\alpha}$ so that $(g,h)$ lies in $R(\tilde{\mathscr T})$. 
Finally, observe that the above estimate also holds for the formal adjoint of $\tilde{\mathscr T}$. Then, since the operators have smooth coefficients, the duality can be obtained thanks to the formal adjoint, and so $\Im \tilde{\mathscr T} = \overline{\Im \tilde{\mathscr T}} = \ker(\tilde{\mathscr T}^*)^\perp = \mathcal C^\alpha$.

The only thing left to see is that solving something for tilded operators really yields something back in the untilded world : what we mean is that since $w$ has exponential growth as $x\to +\infty$, we might have a problem. Indeed, we wanted to solve $\mathscr Tu = (g,h)$ in the weighted spaces, so we saw this equation as $\frac{1}{w}  \mathscr T(w\times(\phi,\psi)) = \frac{1}{w}(g,h) \in \mathcal C^\alpha(\mathbb R) \times \mathcal C^\alpha(\Omega_L)$ and obtained a solution $(\phi,\psi)\in \mathcal C^{2,\alpha}$. In the former cases, since $w\in\mathcal C^{\infty,\alpha}$ we did not have any problem to claim that also $wv\in\mathcal C^{2,\alpha}$ but here it is not the case any more, $w$ is not even bounded, and we might not have $w\psi \in \mathcal C^{2,\alpha}$, we might even not have that it is bounded. Actually, $\mathcal C^{2,\alpha}$ and boundedness for $w\times(\phi,\psi)$ are equivalent because of Schauder estimates, so we just have to see that it is indeed bounded. We will do that by showing that 
$\phi, \psi$ have actually a $Ce^{-rx}$ decay as $x\to +\infty$. 

For this, let $K = \max (|g|_\infty, |h|_\infty)$ and observe that if $A \geq \max( K, K/-f'(1))$ then 
$$(\overline\phi, \overline\psi) = \left(\dfrac{A}{\mu} e^{-rx}, Ae^{-rx}\right)$$ is a supersolution of \eqref{Ttilde} on $x > 1$, where \eqref{Ttilde} has constant coefficients and a positive $0$-order term. Now just multiply this supersolution by a  constant large enough so that it is above $(\phi, \psi)$ on $x=1$ and apply the usual maximum principle and compactness argument to $(\overline\phi - \phi, \overline\psi - \psi)$ : it can neither reach a negative minimum, nor have a negative infimum as $x \to +\infty$, which yields that $\phi, \psi \leq Ce^{-rx}$ for some constant $C > 0$. The same argument works for finding $C' < 0$ such that $\phi, \psi \geq -C' e^{-rx}$.

\section{The case $\varepsilon \simeq 0$}
\label{pertsing}
We start with $\left(c_w,\psi_w,\phi_w = \frac{1}{\mu}\psi_w(\cdot,0)\right)$. We want to continue this solution to a solution of \eqref{Seps} for small $\varepsilon > 0$. If we set as usual $\phi = \phi_0 + \varepsilon \phi_1, \psi = \psi_0 + \varepsilon\psi_1, c = c_0 + \varepsilon c_1$, using $$\phi_1 = \frac{\psi_1 + d\partial_y \psi_0 + \varepsilon d\partial_y\psi_1}{\mu}$$ from the exchange condition yields the equation

\begin{equation}
\label{singperturbbdcond}
\begin{split}
  & W\psi_1 + \varepsilon \frac{c_1}{\mu}\partial_x \psi_1 + \left(-\frac{\varepsilon D}{\mu}\partial_{xx} + \varepsilon\frac{c_0+c_1\varepsilon}{\mu}\partial_x\right)d\partial_y \psi_1 \\
  = &-\frac{c_1}{\mu} \partial_x\psi_0 - \left(-\frac{D}{\mu}\partial_{xx} + \frac{c_0+c_1\varepsilon}{\mu}\partial_x\right)d\partial_y \psi_0 
\end{split}
\end{equation}
as the upper boundary condition for the usual linearised problem in $\psi_1$ :

\begin{equation}
\label{singperturbeq}
-d\Delta\psi_1 + c_0\partial_x\psi_1 - f'(\psi_0)\psi_1 = -c_1\partial_x\psi_0 + R(\varepsilon,c_1,\psi_1)
\end{equation}
In particular, by taking $\varepsilon = 0$ in \eqref{singperturbbdcond}, \eqref{singperturbeq} we retrieve a linear Wentzell problem, i.e. \eqref{singperturbbdcond}, \eqref{singperturbeq} is a singular perturbation of a Wentzell problem on which we already applied the implicit function theorem. Conversely, we can see \eqref{singperturbbdcond} as an integro-differential regularisation of the Wentzell boundary condition, but the regularity theory of \cite{CS09} does not apply easily to this situation.

As before, we want to transform \eqref{singperturbbdcond} in a fixed Wentzell problem by using an auxiliary function. This time, since we do not have any existence or regularity theorem for such problems, we will have to compute everything by hand. Hopefully, since we work in a strip, we can use the partial (in $x$) Fourier transform which will be a very helpful tool. On the other hand, this time we will have to work with a constant coefficient operator instead of the linearised itself in order to be able to do the computations, but we will see that this is not a problem. From now on, let $w$ denote the same weight function as in the Wentzell section. We now give two simple technical lemmas that we will use throughout the next computations.

\begin{lem}
\label{lemfourier}
 If $k\in L^1$, $\hat k\in\mathcal C^\infty \cap L^2$ and $h\in L^\infty$, $\hat h\in\mathcal S'$ then the formula $$\mathcal F^{-1}(\hat k \hat h) = k*h$$ makes sense and holds (where $\mathcal F^{-1}$ denotes the inverse Fourier transform).
\end{lem}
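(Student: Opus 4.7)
The first step is to put both sides of the claim on a legal footing. Since $k\in L^1$ and $h\in L^\infty$, Young's inequality shows that $k\ast h$ is well-defined, bounded, and (in fact) uniformly continuous, so $k\ast h\in L^\infty\subset \mathcal S'$ and its Fourier transform exists in $\mathcal S'$. The real content is to interpret $\hat k\hat h$ as a tempered distribution, because $\hat k$ is only known to be $\mathcal C^\infty\cap L^2\cap L^\infty$ (the $L^\infty$ bound coming from $k\in L^1$), which a priori does not force $\hat k$ to be a multiplier of $\mathcal S$.

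My proposed definition is to set, for every $\phi\in\mathcal S$,
\begin{equation*}
\langle \hat k\hat h, \phi\rangle := \langle \hat h, \hat k\phi\rangle := \int_{\mathbb R} h(y)\,\mathcal F(\hat k\phi)(y)\,dy.
\end{equation*}
The rightmost pairing is a genuine $L^\infty$-$L^1$ duality, provided that $\mathcal F(\hat k\phi)\in L^1(\mathbb R)$. This is the one non-trivial technical point, and it uses both hypotheses on $\hat k$. First, $\hat k\phi \in L^1\cap L^2$ (since $\hat k$ is bounded and in $L^2$, and $\phi\in\mathcal S$), so $\mathcal F(\hat k\phi)$ is a bona fide function. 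Then by the convolution theorem applied to the two $L^1$ functions $\hat k$ and $\phi$ (or equivalently to the Fourier pair on the other side),
\begin{equation*}
\mathcal F(\hat k\phi) = c\cdot \check k \ast \hat\phi
\end{equation*}
for an absolute constant $c$, where $\check k(x)=k(-x)$. Since $k\in L^1$ and $\hat\phi\in\mathcal S\subset L^1$, Young's inequality gives $\check k\ast\hat\phi\in L^1$. Hence $\langle \hat k\hat h,\phi\rangle$ is a well-defined continuous linear form on $\mathcal S$.

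The identity itself follows by a direct Fubini computation. For $\phi\in\mathcal S$,
\begin{equation*}
\langle \mathcal F(k\ast h),\phi\rangle = \langle k\ast h,\hat\phi\rangle = \int_{\mathbb R}\!\!\int_{\mathbb R} k(x-y)\,h(y)\,\hat\phi(x)\,dy\,dx,
\end{equation*}
and the integrand is absolutely integrable since $k\in L^1$, $h\in L^\infty$ and $\hat\phi\in\mathcal S\subset L^1$. Fubini allows swapping the order; the inner integral becomes $\int k(u)\hat\phi(u+y)\,du$, which by the shift formula for $\hat\phi$ equals $\mathcal F(\hat k\phi)(y)$. Therefore
\begin{equation*}
\langle \mathcal F(k\ast h),\phi\rangle = \int_{\mathbb R} h(y)\,\mathcal F(\hat k\phi)(y)\,dy = \langle \hat k\hat h,\phi\rangle,
\end{equation*}
which is the desired identity after applying $\mathcal F^{-1}$. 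The main (and essentially only) obstacle is giving meaning to the product $\hat k\hat h$; once the observation $\mathcal F(\hat k\phi)\in L^1$ is in place, both the definition and Fubini-based verification go through routinely.
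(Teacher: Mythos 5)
Your proof is correct and takes essentially the same approach the paper intends: the paper's one-line proof hands off to the reader precisely the duality pairing and the Fubini--Tonelli computation that you carry out in full. One small slip: you invoke "the convolution theorem applied to the two $L^1$ functions $\hat k$ and $\phi$", but $\hat k$ is only known to be in $\mathcal C^\infty\cap L^2\cap L^\infty$, not $L^1$ -- your parenthetical fix (applying the convolution theorem to the genuine $L^1$ pair $k$ and $\hat\phi$, then taking $\mathcal F^{-1}$) is the correct route to $\mathcal F(\hat k\phi)=\check k*\hat\phi\in L^1$, and is exactly what your closing Fubini calculation reproves directly.
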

\begin{proof}
Since $\hat k$ is a smooth function, the product distribution $\hat k \hat h$ makes sense and we can compute its inverse Fourier transform : we leave it to the reader to check the result using the classical properties of the Fourier transform on $L^2$ and the Fubini-Tonelli theorem.

%on $\phi\in\mathcal S$, $<\hat k \hat h, \check \phi> = <\hat h, \hat k \check \phi> = <h, \widehat{\hat k \check \phi}>$.
% But $\widehat{\hat k \check \phi} = k(-\cdot)*\phi$ since $\widehat{\hat k} = k(-\cdot)$ because $k\in L^2$ and the usual product-convolution formula holds since $\phi\in\mathcal S$. Thus, because $h\in L^\infty$, 
%
%$$<\hat k \hat h, \check \phi> = \int_{\mathbb R} h(u) \left(\int_{\mathbb R} k(t-u)\phi(t) dt \right) du = \int_{\mathbb R} \left( \int_{\mathbb R} k(t-u)h(u) du\right)\phi(t) dt = <k*h,\phi>$$ by Fubini-Tonnelli theorem.

\end{proof}

\begin{lem}
\label{lemconvexpdecay}
 Let $r > 0$. If $h\in L^\infty(\mathbb R)$ and $e^{-rx}h(x) \in L^\infty(\mathbb R)$ as well as $K\in L^1(\mathbb R)$ and $e^{-rt}K(t) \in L^1(\mathbb R)$ then $$K*h\in L^\infty(\mathbb R) \text{ and } e^{-rx}(K*h)(x) \in L^\infty(\mathbb R)$$
If moreover $e^{-rx}h\in\mathcal C^\alpha(\mathbb R)$, then $e^{-rx}(K*h)(x) \in \mathcal C^\alpha(\mathbb R)$.
\end{lem}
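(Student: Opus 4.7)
The plan is to reduce everything to a single convolution identity that moves the exponential weight from the output inside the convolution. Starting from $(K*h)(x)=\int_{\mathbb R} K(t)h(x-t)\,dt$, I would write
\[
e^{-rx}(K*h)(x) \;=\; \int_{\mathbb R} \bigl(e^{-rt}K(t)\bigr)\,\bigl(e^{-r(x-t)}h(x-t)\bigr)\,dt \;=\; (\tilde K * g)(x),
\]
where $\tilde K(t):=e^{-rt}K(t)$ and $g(y):=e^{-ry}h(y)$. By the hypotheses, $\tilde K\in L^1(\mathbb R)$ and $g\in L^\infty(\mathbb R)$ (and, for the last statement, $g\in\mathcal C^\alpha(\mathbb R)$).

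The first two assertions are now immediate from Young's inequality: $\|K*h\|_\infty\le \|K\|_{L^1}\|h\|_\infty$ gives $K*h\in L^\infty$, and $\|\tilde K*g\|_\infty\le \|\tilde K\|_{L^1}\|g\|_\infty$ gives $e^{-rx}(K*h)(x)\in L^\infty$.

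For the Hölder statement, I would use the standard translation-invariance estimate for convolution against an $L^1$ kernel: for any $x_1,x_2\in\mathbb R$,
\[
\bigl|(\tilde K*g)(x_1)-(\tilde K*g)(x_2)\bigr| \;\le\; \int_{\mathbb R}|\tilde K(t)|\,\bigl|g(x_1-t)-g(x_2-t)\bigr|\,dt \;\le\; \|\tilde K\|_{L^1}\,[g]_{\mathcal C^\alpha}\,|x_1-x_2|^\alpha,
\]
which combined with the $L^\infty$ bound yields $e^{-rx}(K*h)(x)=(\tilde K*g)(x)\in\mathcal C^\alpha(\mathbb R)$, with explicit control $\|\tilde K*g\|_{\mathcal C^\alpha}\le \|\tilde K\|_{L^1}\|g\|_{\mathcal C^\alpha}$.

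There is no real obstacle here; the only thing to be careful about is to perform the change of variables that identifies $e^{-rx}(K*h)(x)$ with the convolution of the weighted kernel $\tilde K$ against the weighted function $g$, after which the two claims reduce to the textbook facts that convolution by an $L^1$ kernel is bounded on $L^\infty$ and on $\mathcal C^\alpha$.
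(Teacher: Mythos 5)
Your proof is correct and is essentially the same as the paper's: the identity $e^{-rx}K(t)h(x-t)=\bigl(e^{-rt}K(t)\bigr)\bigl(e^{-r(x-t)}h(x-t)\bigr)$ is exactly the manipulation the paper writes out inside the integral, and the $L^\infty$ and Hölder bounds then follow by the same $L^1$--$L^\infty$ convolution estimate. You merely package the computation as a single convolution identity $e^{-rx}(K*h)=\tilde K * g$ before invoking the textbook facts, which is a cleaner but equivalent presentation.
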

\begin{proof}
\begin{equation*}
\begin{split}
|e^{-rx}(K*h)(x)| \leq \int_{\mathbb R} |K(t)e^{-rx}h(x-t)|dt  &\leq \int_{\mathbb R} |e^{-rt}K(t)||e^{-r(x-t)}h(x-t)|dt \\ &\leq |e^{-rt}K(t)|_{L^1} |e^{-rx}h(x)|_{L^\infty}
\end{split}
\end{equation*}
For the second part of just observe that 
\begin{equation*}
\begin{split}
\frac{|e^{-ry}(K*h)(y)-e^{-rx}(K*h)(x)|}{|x-y|^\alpha} &\leq \int_{\mathbb R} K(t)e^{-rt} \frac{|e^{-r(y-t)}h(y-t)-e^{-r(x-t)}h(x-t)|}{|y-x|^\alpha}dt \\
& \leq |e^{-rt}K(t)|_{L^1} |e^{-rx}h(x)|_\alpha
\end{split}
\end{equation*}
\end{proof}

Now we assert the following :

\begin{lem}
 By taking $r>0$ small enough in the definition of $w$, we have $$\tilde\psi \in \mathcal C^1 (\mathbb [0,1]\times \mathbb R \times \mathcal C^{3,\alpha}_w(\Omega_L) ; \mathcal C^{2,\alpha}_w(\Omega_L))$$ where $u = \tilde\psi(\varepsilon,c_1,v)$ solves

\begin{center}
\begin{tikzpicture}
\draw (-8,0) -- (8,0) node[pos=0.5,above] {$Wu + \varepsilon \frac{c_1}{\mu}\partial_x u + \left(-\frac{\varepsilon D}{\mu}\partial_{xx} + \varepsilon\frac{c_0+c_1\varepsilon}{\mu}\partial_x\right)d\partial_y u = h_0 - \varepsilon \frac{c_1}{\mu}\partial_x v - \left(-\frac{\varepsilon D}{\mu}\partial_{xx} + \varepsilon\frac{c_0+c_1\varepsilon}{\mu}\partial_x\right)d\partial_y v$};
\node at (0,-1) {$\mathcal -\Delta u + u = 0$};
\draw (-8,-2) -- (8,-2) node[pos=0.5,below] {$\partial_y u = 0$};
\end{tikzpicture}
\end{center}
and $h_0 := -\frac{c_1}{\mu} \partial_x\psi_0 - \left(-\frac{D}{\mu}\partial_{xx} + \frac{c_0+c_1\varepsilon}{\mu}\partial_x\right)d\partial_y \psi_0 \in \mathcal C^{\alpha}(\mathbb R)$.  Moreover, we have the estimate

$$|u|_{\mathcal C^{2,\alpha}(\Omega_L)} \leq C_1|h_0|_\infty + C_2 \left|\frac{1}{\varepsilon}K_0\left(\frac{|x|}{d\varepsilon}\right)*(h_0 + \varepsilon h(v))\right|_\alpha + C_3 |h_0 + \varepsilon h(v)|_\alpha$$
where $K_0$ denotes the $0$-th modified Bessel function of the second kind (which is integrable\footnote{it increases in a logarithmic fashion as $x\to 0$ and decreases as $e^{-x}/x$ as $x\to\infty$, see \cite{OMS09} p.532.} and whose Fourier transform is $\frac{\pi}{\sqrt{1+x^2}}$) so that $\frac{1}{\varepsilon}K_0\left(\frac{|x|}{d\varepsilon}\right)$ realises an approximation to the identity, and where $h(v)$ denotes $\partial_x v + \partial_{xy} v + \partial_{xxy} v$. Finally, we also have $$\mathcal L\tilde\psi(\varepsilon,c_1,v) \in \mathcal C^{1,\alpha}_w(\Omega_L)$$
\end{lem}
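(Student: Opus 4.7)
The strategy is to solve this linear boundary value problem explicitly by the partial Fourier transform in $x$, taking advantage of the constant-coefficient interior operator $-\Delta+1$. Writing $\mathcal F_x$ for the $x$-Fourier transform, $\lambda(\xi) = \sqrt{1+\xi^2}$ and $T(\xi) = d\lambda(\xi)\tanh(\lambda(\xi)L)$, the interior equation together with the Neumann condition at $y=-L$ force
\[
\hat u(\xi, y) = \hat u(\xi, 0)\,\frac{\cosh\bigl(\lambda(\xi)(y+L)\bigr)}{\cosh\bigl(\lambda(\xi)L\bigr)}, \qquad d\partial_y \hat u(\xi, 0) = T(\xi)\,\hat u(\xi, 0).
\]
Plugging this into the upper boundary condition turns it into an algebraic equation $m(\xi, \varepsilon, c_1)\,\hat u(\xi, 0) = \widehat{h_0 + \varepsilon h(v)}(\xi)$ where the symbol is
\[
m(\xi, \varepsilon, c_1) = T(\xi) + ic_0\xi + D\xi^2 + \varepsilon\,\tfrac{ic_1\xi}{\mu} + \varepsilon\Bigl(\tfrac{D\xi^2}{\mu} + i\tfrac{(c_0+c_1\varepsilon)\xi}{\mu}\Bigr)T(\xi).
\]
At $\varepsilon=0$ this is the classical Wentzell symbol, elliptic of order two; for $\varepsilon>0$ the extra term $\varepsilon(D/\mu)\xi^2 T(\xi) \sim \varepsilon (Dd/\mu)|\xi|^3$ makes $m$ of order three, and this singular jump in order is precisely what has to be tamed uniformly in $\varepsilon$.

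The core of the proof is then the inversion of $m$ with $\varepsilon$-uniform estimates. Since $T(\xi)/d \sim \lambda(\xi)$ at infinity, factoring $1/m$ by pulling out the dominant block leaves, at leading order, a multiplier proportional to $1/(1+d^2\varepsilon^2\xi^2)^{1/2}$. Using the scaling identity $\mathcal F_x\!\bigl[\tfrac{1}{d\varepsilon}K_0(|x|/(d\varepsilon))\bigr](\xi) = \pi/\sqrt{1+d^2\varepsilon^2\xi^2}$, this multiplier is exactly convolution with the rescaled Bessel kernel $\tfrac{1}{d\varepsilon}K_0(|x|/(d\varepsilon))$; the remaining factors act as Fourier multipliers that are uniformly bounded in $\varepsilon$ on $\mathcal C^\alpha$. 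Combining this decomposition with Lemmas \ref{lemfourier} and \ref{lemconvexpdecay} yields $\hat u(\cdot, 0) \in \mathcal C^\alpha(\mathbb R)$ with exactly the three-term bound of the statement: $C_1|h_0|_\infty$ absorbs the low-frequency (zeroth-order) contribution, the convolution term captures the singular mollification, and $C_3|h_0+\varepsilon h(v)|_\alpha$ gathers the regular remainder. Classical Schauder theory for $-\Delta+1$ with a $\mathcal C^{2,\alpha}$ Dirichlet trace on $y=0$ then transports this boundary control to a global $\mathcal C^{2,\alpha}(\Omega_L)$ bound, since the Poisson-type multiplier $\cosh(\lambda(\xi)(y+L))/\cosh(\lambda(\xi)L)$ remains bounded uniformly for $y\in[-L,0]$.

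To promote the bound to the weighted space $\mathcal C^{2,\alpha}_w$, I would follow the pattern of Lemma \ref{weightedlinf}: choosing $r>0$ small enough that $-Dr^2 + c_0 r \geq 0$ (and that $w = e^{rx}$ contributes a positive zero-order term to the homogeneous boundary symbol on $x<0$), the function $Kw \pm u$ can neither reach a negative minimum (by the strong maximum principle together with the Hopf lemma at the Neumann and Wentzell-type boundaries) nor tend to a negative infimum at infinity (by the standard translation/compactness argument). This provides an $L^\infty$ estimate for $w_1 u$ with the announced constants, and weighted Schauder estimates upgrade it to the full $\mathcal C^{2,\alpha}_w$ bound. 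The $\mathcal C^1$ dependence of $\tilde\psi$ on $(\varepsilon, c_1, v)$ is immediate from the affine-in-parameters structure of the defining problem and the uniformity of the estimate. Finally, the interior equation $-\Delta\tilde\psi + \tilde\psi = 0$ gives $\mathcal L\tilde\psi = -d\tilde\psi + c_0\partial_x\tilde\psi - f'(\psi_0)\tilde\psi$, which lies in $\mathcal C^{1,\alpha}_w$ by smoothness of $f$ and $\psi_0$ together with the $\mathcal C^{2,\alpha}_w$ control just obtained.

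The main obstacle, as anticipated in Remark \ref{rmksing}, is the non-uniform ellipticity of $m(\xi, \varepsilon)$ as $\varepsilon \to 0$: standard multiplier theorems give estimates that blow up in $\varepsilon$, and the loss of one $x$-derivative is unavoidable — this is why one must assume $v\in \mathcal C^{3,\alpha}_w$ (whence the appearance of $\partial_x v, \partial_{xy}v, \partial_{xxy}v$ in $h(v)$), and it is what forces the final solution to be only $\mathcal C^{2,\alpha}$ rather than $\mathcal C^{3,\alpha}$. The decisive insight is that the whole non-uniformity is carried by a single explicit kernel, the rescaled Bessel $K_0$, which behaves as an approximate identity on Hölder spaces, so that the estimate survives the limit $\varepsilon \to 0$.
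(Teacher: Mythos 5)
Your core Fourier analysis follows the paper closely: you introduce the partial $x$-transform, the symbol $m(\xi,\varepsilon,c_1)$ (modulo some dropped $1/\mu$ factors, a harmless slip), identify the $\varepsilon$-scaled $K_0$ Bessel kernel as the approximation of the identity that tames the jump in symbol order, and close the unweighted $\mathcal C^{2,\alpha}$ bound on $y=0$ with Schauder for the Dirichlet problem, which is exactly the paper's route. Your derivation of the last claim, $\mathcal L\tilde\psi = -d\tilde\psi + c_0\partial_x\tilde\psi - f'(\psi_0)\tilde\psi \in \mathcal C^{1,\alpha}_w$, from the interior equation $-\Delta\tilde\psi+\tilde\psi=0$ is a clean observation that the paper leaves implicit.

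The genuine gap is in the passage to the weighted space. You propose to follow the maximum-principle strategy of Lemma \ref{weightedlinf}, applying the strong maximum principle and Hopf's lemma to $Kw \pm u$ at the boundary $y=0$. But the boundary operator here is not a Wentzell operator: besides the usual $d\partial_y - sD\partial_{xx} + sc_0\partial_x$ it contains the singular terms $\left(-\tfrac{\varepsilon D}{\mu}\partial_{xx} + \varepsilon\tfrac{c_0+c_1\varepsilon}{\mu}\partial_x\right)d\partial_y u$, which involve $\partial_{xy}u$ and $\partial_{xxy}u$. At a boundary extremum of $Kw-u$ Hopf controls $\partial_y$, and the first- and second-order \emph{tangential} derivatives have a sign, but the mixed derivatives $\partial_{xy}(Kw-u)$ and $\partial_{xxy}(Kw-u)$ are not constrained, so the boundary inequality does not close. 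This is precisely why the paper states that one must ``compute everything by hand'': it replaces the maximum-principle argument with a Paley-Wiener argument. You need Lemma \ref{analcont}, which you did not invoke: the kernel $\hat k_y$ admits a meromorphic continuation to a complex strip $|\Im\zeta|<1$ with no poles in $|\Im\zeta|<\rho$, so by \cite{RS}, Theorem IX.14, the kernels $k_y(x)$ decay like $e^{-\rho|x|}$ uniformly in $y$ and in $(\varepsilon,c_1)$. Feeding this decay into Lemma \ref{lemconvexpdecay} is what gives the $L^\infty$ bound on $w_1 u$ (choosing $r < \min(\rho,1)$), and the same decay for $K_0$ and the remainder kernels gives $w_1\partial_{xx}u(\cdot,0) \in \mathcal C^\alpha$. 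Without the analytic continuation and the induced exponential decay of the convolution kernels, the weighted estimate is not established.
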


\begin{proof}
 The proof is based on the kernel analysis of this problem after applying a partial Fourier transform. First, let us see that $v\in\mathcal C^{3,\alpha}_w(\mathbb R)$ implies that the right-hand side in the boundary condition for $u$ is in $\mathcal C^{\alpha}_w(\mathbb R)$. In the following, for the sake of notations we will only write it $h$.
Applying formally the $x$-Fourier transform, we get a one parameter (in $\xi$) family of two-points boundary problems (in $y$) which are solved necessarily by $$\hat u(\xi,y) = C(\xi)\cosh(\sqrt{\xi^2 + 1}(y+L))$$ and the upper boundary condition yields, if we set $\beta(\xi) = \sqrt{\xi^2 + 1}$

$$ C(\xi) = \frac{\hat h(\xi)}{d\beta\left(\xi\right)\sinh\left(\beta\left(\xi\right)L\right)\left(1+\frac{\varepsilon D}{\mu}\xi^2 + \varepsilon \frac{c_0 + c_1\varepsilon}{\mu}i\xi\right) + \left(\frac{D}{\mu}\xi^2 + \frac{c_0+c_1\varepsilon}{\mu}i\xi\right)\cosh\left(\beta\left(\xi\right)L\right)} $$
i.e. we get
$$\hat u(\xi,y) = C(\xi)\cosh\left(\beta\left(\xi\right)\left(y+L\right)\right) \hat h(\xi) =: \hat k_y(\xi)\hat h(\xi)$$
where $$ C(\xi) = \frac{1}{d\beta\left(\xi\right)\sinh\left(\beta\left(\xi\right)L\right)\left(1+\frac{\varepsilon D}{\mu}\xi^2 + \varepsilon \frac{c_0 + c_1\varepsilon}{\mu}i\xi\right) + \left(\frac{D}{\mu}\xi^2 + \frac{c_0+c_1\varepsilon}{\mu}i\xi\right)\cosh\left(\beta\left(\xi\right)L\right)} $$

Now for each $ -L \leq y < 0$, this kernel is in the Schwartz space $\mathscr S(\mathbb R)$ and $u(x,y)$ for such $y$ can be obtained by the usual convolution product between the Fourier inverse of $\hat k_y(\xi)$ and $h(x)$. Moreover, since for $-L \leq y < -\delta$ with $\delta > 0$ the kernels are a $\mathcal C^\infty$ family that is uniformly bounded in the Schwartz space $\mathscr S(\xi)$, we have by dominated convergence that $u$ is a $\mathcal C^\infty$ function in $\Omega_L$, in particular it is locally $\mathcal C^{2,\alpha}$. We now want to investigate the regularity of $u$ on the line $y=0$ in order to  use Schauder estimates to conclude to a uniform $\mathcal C^{2,\alpha}$ regularity.

On $y=0$, things get a little more complicated since the kernel involved is $$ \hat k_0(\xi) = \frac{1}{d\beta\left(\xi\right)\tanh\left(\beta\left(\xi\right)L\right)\left(1+\frac{\varepsilon D}{\mu}\xi^2 + \varepsilon \frac{c_0 + c_1\varepsilon}{\mu}i\xi\right) + \left(\frac{D}{\mu}\xi^2 + \frac{c_0+c_1\varepsilon}{\mu}i\xi\right)} $$ which decays only like $\dfrac{1}{1 + \frac{D}{\mu}\xi^2 + \frac{\varepsilon d D}{\mu}|\xi|^3}$, and $(i\xi)^2 \hat k_0(\xi)$ like $\dfrac{\xi^2}{1 + \frac{D}{\mu}\xi^2 + \frac{\varepsilon d D}{\mu}|\xi|^3}$. Keep in mind that we are interested in $\varepsilon$ independent estimates, so we cannot use the little bonus decay it gives. Nonetheless, observe that $\hat k_0$ is $\mathcal C^1$ with respect to the parameters $(c_1\in\mathbb R,\varepsilon\in[0,1])$ (this is something we will need in the end to apply the implicit function theorem) and decays at worst (when $\varepsilon = 0$) as $\frac{\mu}{D(1+\xi^2)}$. Heuristically, we see that $\varepsilon > 0$ is not a problem in 
the sense that it adds decay and does not prevent analyticity, so in a Fourier point of view, the worst case is when $\varepsilon = 0$, and in this case the kernels are nothing more than the kernels for the Wentzell problem in a strip, which is known to be well-posed. We use a Paley-Wiener type theorem to prove this :

\begin{itemize}
 \item $\hat k_0(\xi)$ is a $\mathcal C^1$ in $(c_1 \in \mathbb R ,\varepsilon\in [0,1])$ family of integrable (because the worst decay is $\frac{1}{1+\frac{D}{\mu}\xi^2}$ for $\varepsilon = 0$) and real analytic functions (as the inverse of real analytic functions that have no zero). Moreover, independently from $\varepsilon$ and $c_1$, these real analytic functions admit an analytic continuation to a complex strip $|\Im \zeta| < a$ with $a > 0$ that have a $\eta$-uniformly bounded $L^1$ norm on the real lines $\mathbb R + i\eta$, $-a < \eta < a$, see lemma \ref{analcont}. By virtue of the Paley-Wiener type theorem of \cite{RS} (IX.14) and the dominated convergence theorem we know that $k_0(x)$ is a $\mathcal C^1$ in $c_1 \in \mathbb R ,\varepsilon\in [0,1]$ family of bounded continuous real functions that satisfy all $|k_0(x)| \leq C_a e^{-a|x|}$. Now, we can say that $u(x,0) = k_0*h$ is a bounded 
continuous function that is $\mathcal C^1$ with respect to the parameters $\varepsilon, c^1$ and $v$ (since $h$ is $\mathcal C^1$ in those parameters as product and sum of affine functions).

\item For the sake of simplicity, we divide the analysis of $\xi^2 \hat k_0(\xi)$ in two cases : $\varepsilon > 0$ or $\varepsilon = 0$ and we will see that the result is smooth in $\varepsilon$. \\
Case $\varepsilon = 0$ : in this case, the asymptotic behaviour of  $\xi^2 \hat k_0(\xi)$ as $|\xi| \to \infty$ yields $\xi^2 \hat k_0(\xi) = \frac{\mu}{D} - \frac{\mu^2d}{D^2\sqrt{1+\xi^2} } + r_1(\xi)$ where $r_1$ denotes an integrable function (it decays like $1/\xi^2$) that has an analytic continuation in some complex strip $|\Im| < a$, i.e. to which the same analysis as above applies. Thus, the Fourier transform of $\xi^2 \hat k_0(\xi)$ is given by $\frac{\mu}{D}\delta - \frac{\mu^2d}{D^2}\frac{1}{\pi}K_0(|x|) + \check r_1$, where $\delta$ denotes the Dirac distribution, $K_0$ the modified Bessel function of order $0$, and where $\check r_1$ has the properties described in the section above. By lemma \ref{lemfourier} we get
$$\partial_{xx}u(x,0) = \frac{\mu}{D}h_0 - \frac{\mu^2d}{D^2}\frac{1}{\pi}K_0(|\cdot|)*h_0 + \check r_1*h_0 \in \mathcal C^\alpha(\mathbb R)$$
since $h_0 \in \mathcal C^\alpha(\mathbb R)$  \\

Case $\varepsilon > 0$. This changes the decay of the kernel from constant to $1/\xi$, so we will not get a Dirac term in the Fourier transform. Nonetheless, what is tricky is that we want $u=\tilde\psi$ to be a $\mathcal C^1$ function in $\varepsilon$ to be able to use the implicit function theorem, i.e. we separated the computations for $\varepsilon > 0$ or $=0$, but in the end the results should agree when $\varepsilon \to 0$. This will be based on the fact that the functions we will obtain will behave as an approximation to the identity as $\varepsilon\to 0$.

Indeed,
$\xi^2 \hat k_0(\xi) = \frac{\mu}{D} \frac{1}{\sqrt{1+(d\varepsilon)^2\xi^2}} + r_2$. Notice that we chose to put $\varepsilon$ in front of $\xi$ inside the square root rather than just let it appear as $\frac{1}{\varepsilon}$ : this is the right way to get smoothness in $\varepsilon$, since this gives the correct decay even if $\varepsilon = 0$. Now, observe that the inverse Fourier transform of the first term is $\frac{\mu}{D}\frac{1}{\pi}\frac{1}{d\varepsilon}K_0(\frac{|x|}{d\varepsilon})$ : since $K_0(|\cdot|)$ is an integrable function on $\mathbb R^1$ whose integral equals to $\pi$, this clearly is $\frac{\mu}{D}$ times an approximation to the identity. 
%attention conneries, tenter de détailler mieux...
We finish by saying that the term $r_2$ can be computed as $\frac{C(\varepsilon)}{\sqrt{1+\xi^2}} + r_3$ where $C(\varepsilon)$ is a smooth function that satisfies $C(0) = -\frac{\mu^2d}{D}$ and $r_3$ is a smooth family with respect to $(c_1,\varepsilon)$ of integrable functions to which the same analysis as $r_1$ applies, and that goes to $r_1$ as $\varepsilon \to 0$.
\end{itemize}
This analysis gives that $u\in\mathcal C^{2,\alpha}(y=0)$ and then by applying Schauder estimates for the Dirichlet problem, we get that $u\in\mathcal C^{2,\alpha}(\Omega_L)$. We describe now with more details the same technique applied on $w_1u$.

We are now left to show that a weighted data yields a weighted solution, i.e. that $w_1u\in\mathcal C^{2,\alpha}(\Omega_L)$. We observe that $v = w_1u$ solves the following equation in $\Omega_L$ :

\begin{center}
\begin{tikzpicture}
\draw (-6,0) -- (6,0) node[pos=0.5,above] {$v = v$};

\node at (0,-1) {\ \ \ \ \ \ \ \ \ $-\Delta v  + 2\left(\frac{\partial_x w_1}{w_1}\right)\partial_x v + \left(1+\frac{\partial_{xx} w_1}{w_1} - 2\left(\frac{\partial_x w_1}{w_1}\right)^2\right)v = 0$};

\draw (-6,-2) -- (6,-2) node[pos=0.5,below] {$\partial_y v = 0$};

\draw[<->] (-6.3,-2) -- (-6.3,0) node[pos=0.5,left] {$L$};

\filldraw [fill=gray!20,draw=black]
(-4.5,-0.2) node[below] {$B$}
(-3.5,0) arc (180:0:-1)
(-5.5,0) -- (-3.5,0);

\filldraw [fill=gray!20,draw=black]
(-5.5,-2) arc (180:0:1)
(-5.5,-2) -- (-3.5,-2);

\draw[blue](-5.5,-2) -- (-3.5,-2);
\draw[blue](-5.5,0) -- (-3.5,0) node [pos=0.5,above] {$T$};
\end{tikzpicture}
\end{center}
Thanks to the expression of $w_1$, the coefficients of this equation are smooth bounded functions and we can use local estimates up to the boundary for the Dirichlet or the Neumann problem (see Cor. 6.7 and Lemma 6.29 in \cite{GT}), so it suffices to show that $w_1u$ is bounded and that $w_1u(\cdot,0) \in \mathcal C^{2,\alpha}(\mathbb R)$, which thanks to the expression of $w_1$, is similar to $w_1 \partial_{xx} u \in \mathcal C^{\alpha}(\mathbb R)$. We show that these are true provided $r < \min(\rho,1)$ (see lemma \ref{analcont} for the definition of $\rho$).
\begin{itemize}
 \item $w_1u$ is bounded thanks to lemma \ref{lemconvexpdecay} : indeed $w_1u(x,y) = w_1(x)(k_y*h)(x)$. As we already said, $k_y$ is a family of bounded continuous functions uniformly bounded in $L^1$. Moreover, they have a uniform $Ce^{-\rho |x|}$ decay as $x\to\pm\infty$ : for this see lemma \ref{analcont} below and use \cite{RS}, Theorem IX.14.
 \item $w_1\partial_{xx} u(\cdot,0)$ is bounded and has $\mathcal C^\alpha$ regularity since $K_0(|\cdot|)$ has $e^{-|x|}/|x|$ decay and the other kernels appearing in $\partial_{xx} u(\cdot,0)$ satisfy lemma \ref{lemconvexpdecay} too, thanks to their common analyticity ; see lemma \ref{analcont}.
\end{itemize}
\end{proof}

\begin{lem}
\label{analcont}
 Replacing $\xi$ with the complex variable $\zeta$ in $\hat k_y(\xi)$ yields a meromorphic continuation of $\hat k_y$ in the strip $-1 < \Im z < 1$ that has no pole in a strip $-\rho < \Im z < \rho$ for $\rho > 0$ small enough.
\end{lem}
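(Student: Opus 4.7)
The plan is to proceed in three steps: (i) construct the meromorphic continuation by exhibiting the building blocks as entire functions of $\zeta$, (ii) show that the denominator does not vanish on the real axis because its real part is strictly positive, and (iii) promote this to a strip by a compactness argument at finite $|\Re\zeta|$ combined with growth estimates at infinity.

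\emph{Step 1 (meromorphic extension).} Write $\beta(\zeta)^{2}=\zeta^{2}+1$. Although $\beta$ itself has branch points at $\pm i$, the only occurrences of $\beta$ in $\hat k_{y}$ are through the even functions $\cosh(\beta(y+L))$, $\cosh(\beta L)$ and $\beta\sinh(\beta L)=\beta^{2}\cdot\frac{\sinh(\beta L)}{\beta}$. Since $\cosh$ is even and $\frac{\sinh(x)}{x}$ is even and entire, each of these expressions is an entire function of $\beta^{2}=\zeta^{2}+1$, hence an entire function of $\zeta$. Consequently
$$D(\zeta):=d\beta(\zeta)\sinh(\beta(\zeta)L)\Bigl(1+\tfrac{\varepsilon D}{\mu}\zeta^{2}+\varepsilon\tfrac{c_{0}+c_{1}\varepsilon}{\mu}i\zeta\Bigr)+\Bigl(\tfrac{D}{\mu}\zeta^{2}+\tfrac{c_{0}+c_{1}\varepsilon}{\mu}i\zeta\Bigr)\cosh(\beta(\zeta)L)$$
is entire, and likewise the numerator $\cosh(\beta(\zeta)(y+L))$ is entire. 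So $\hat k_{y}(\zeta)$ is a meromorphic function on all of $\mathbb{C}$, whose poles are exactly the zeros of $D$.

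\emph{Step 2 (non-vanishing on $\mathbb{R}$).} For $\xi\in\mathbb{R}$, $\beta(\xi)=\sqrt{\xi^{2}+1}>0$, and both $\sinh(\beta L)$ and $\cosh(\beta L)$ are real and positive. Taking the real part of $D(\xi)$:
$$\Re D(\xi)=d\beta(\xi)\sinh(\beta(\xi)L)\Bigl(1+\tfrac{\varepsilon D}{\mu}\xi^{2}\Bigr)+\tfrac{D}{\mu}\xi^{2}\cosh(\beta(\xi)L)\ \geq\ d\beta(\xi)\sinh(\beta(\xi)L)>0,$$
for every $\xi\in\mathbb{R}$ and every $\varepsilon\in[0,1]$. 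Thus $D$ has no zero on the real axis.

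\emph{Step 3 (extension to a strip).} We claim that $|D(\zeta)|\to\infty$ as $|\Re\zeta|\to\infty$, uniformly in $|\Im\zeta|\leq\tfrac{1}{2}$. Indeed, on such a horizontal strip $\beta(\zeta)\sim\zeta$, and $|\cosh(\beta(\zeta)L)|,|\sinh(\beta(\zeta)L)|$ grow like $\tfrac{1}{2}e^{L|\Re\beta(\zeta)|}$, while the polynomial factor in front contributes at most polynomially; a short computation shows the dominant term forces $|D(\zeta)|\gtrsim |\zeta|e^{L|\Re\zeta|/2}$ for $|\Re\zeta|$ large. Hence the zero set of $D$ in any fixed horizontal strip of width less than $2$ is contained in a compact set $K\subset\mathbb{C}\setminus\mathbb{R}$. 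Since $D$ is continuous and nonzero on the compact segment $K\cap\{\Im\zeta=0\}=\emptyset$, and the zeros of $D$ form a closed set disjoint from $\mathbb{R}$, their distance $\rho:=\mathrm{dist}(\mathbb{R},D^{-1}(0))$ to the real axis is strictly positive. Then no pole of $\hat k_{y}$ lies in $\{-\rho<\Im\zeta<\rho\}$, as desired.

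The only point requiring care — and which I expect to be the main technical nuisance — is the growth estimate in Step 3, namely checking that $|D(\zeta)|\to\infty$ uniformly on $|\Im\zeta|\leq\tfrac12$: one must verify that the real part of $\beta(\zeta)$ keeps a definite sign along the strip so that the exponential $\cosh/\sinh$ factors really dominate over cancellations from the complex polynomial prefactor. Everything else is formal manipulation. Note that the same argument gives $\mathcal{C}^{1}$ dependence of $\rho$ on the parameters $(c_{1},\varepsilon)$ because $D$ depends $\mathcal C^{1}$ on them and the zero set moves continuously.
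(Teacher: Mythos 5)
Your proposal is correct and follows the same outline as the paper's proof: produce the meromorphic continuation, show the denominator $D$ has no real zero, and use a growth-at-infinity estimate to confine the zeros of $D$ in a horizontal strip to a compact set disjoint from $\mathbb{R}$, which yields a positive safety margin $\rho$. Two small refinements relative to the paper: your Step~1 notices that every occurrence of $\beta$ is through an even function of $\beta$, so $\hat k_y$ is in fact meromorphic on all of $\mathbb{C}$ rather than only in the strip $|\Im\zeta|<1$ where a single-valued branch of $\beta$ exists, and your Step~2 makes explicit the paper's unstated ``direct computation'' by exhibiting $\Re D(\xi)\geq d\beta(\xi)\sinh(\beta(\xi)L)>0$ for real $\xi$.
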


\begin{proof}
 First, observe that apart from $\beta(\xi)$, the denominator of $\hat k_y$, which we will note $F(\xi)$ in this proof : $$d\beta\left(\xi\right)\sinh\left(\beta\left(\xi\right)L\right)\left(1+\frac{\varepsilon D}{\mu}\xi^2 + \varepsilon \frac{c_0 + c_1\varepsilon}{\mu}i\xi\right) + \left(\frac{D}{\mu}\xi^2 + \frac{c_0+c_1\varepsilon}{\mu}i\xi\right)\cosh\left(\beta\left(\xi\right)L\right)$$ is composed of holomorphic functions over the whole complex plane. The only limiting function is $\beta(\xi)$ which is holomorphic in the strip $-1 < \Im z < 1$. As a result, $\hat k_y$ is meromorphic in this strip. Moreover, thanks to the $\frac{D}{\mu}\xi^2\cosh(\beta(\xi)L)$ term we can see that if $\xi$ is large enough, $|F(\xi)|$ is large enough (independently from $\Im \zeta$ in the strip and from $c_1, \varepsilon$), so its zeroes have to be in a rectangle centred at the complex origin whose length depends on the parameters (but not on $\varepsilon$ or $c_1$). Since the zeros of a non-zero holomorphic functions 
are isolated, we know that $F$ has a finite number of zeros in such a rectangle. Moreover, a direct computation shows that it cannot have any zero on the real line. Thus, there exists $\rho >0$ small enough such that on the strip $-\rho < \Im \xi < \rho$, $F$ does not vanish.\footnote{Actually, contour integrals of $F'/F$ show that $F$ has only one or two zeros, depending on $d,D,\mu,c_0$ and $L$, and we can easily see that these are on the imaginary axis by solving for $\zeta = i\eta$ the equation $d\sqrt{1-\eta^2}\tanh(\sqrt{1-\eta^2} L)(1-\varepsilon\frac{D}{\mu}\eta^2-\varepsilon\frac{c_0+c_1\varepsilon}{\mu}\eta) =\frac{D}{\mu}\eta^2+\frac{c_0+c_1\varepsilon}{\mu}\eta $.}

\end{proof}

We now turn to the implicit function theorem procedure that concludes this section. Searching as usual for $\psi^1$ with form $\psi^1 = v + \tilde\psi(v)$, we are reduced to solving the following problem on $v$ :

\begin{center}
\begin{tikzpicture}
\draw (-6,0) -- (6,0) node[pos=0.5,above] {$d\partial_yv - \frac{D}{\mu}\partial_{xx} v +
\frac{c_0}{\mu}\partial_x v = 0$};

\node at (0,-1) {$\mathcal Lv + c^1\partial_x\psi^0= R(\varepsilon,c^1,v) - \mathcal L\tilde\psi(s,c^1,v)$};

\draw (-6,-2) -- (6,-2) node[pos=0.5,below] {$\partial_y v = 0$};

\end{tikzpicture}
\end{center}

We now use the analysis of section \ref{pwopen} to claim that $\mathcal L$ endowed with this Wentzell boundary condition has the Fredholm property of index $0$ between $\mathcal C^{3,\alpha}_w$ and $\mathcal C^{1,\alpha}_w$.  Since $R$ lies also in $\mathcal C^{1,\alpha}_w$, the procedure is then exactly the same as in \ref{pwopen} and for $\varepsilon > 0$ small enough leads to a solution $c_1, \psi_1 = v+\tilde{\psi}(\varepsilon,c_1,v)$ of \eqref{singperturbbdcond}, \eqref{singperturbeq} that lies in $\mathcal C^{2,\alpha}_w(\Omega_L)$.

Then, setting $\phi_1(x) = \dfrac{\psi_1(x,0) + d\partial_y \psi_0(x,0) + \varepsilon d\partial_y\psi_1(x,0)}{\mu}$ we get that $\phi_1 \in \mathcal C^{2,\alpha}_w(\mathbb R)$ and that $\phi = \frac{1}{\mu}\psi_0(x,0)+\varepsilon\phi_1, \psi = \psi_0 + \varepsilon\psi_1$ solves \eqref{Seps} except for the right limit condition.
But the analysis of Proposition \ref{rightlimitcd} with the maximum principle and Hopf lemma for the system gives that if $\varepsilon > 0$ is taken small enough, this limit holds.

\begin{rmq}
Observe that solving this singular perturbation had a price of one derivative: we started by assuming $\psi_0 \in \mathcal C^{3,\alpha}(\Omega_L)$ but we end up with a solution of $(S)_\varepsilon$ that is only $\mathcal C^{2,\alpha}(\mathbb R)\times\mathcal C^{2,\alpha}(\Omega_L)$.
\end{rmq}

\section*{Acknowledgement}
The research leading to these results has received funding from the European Research Council under the European Union’s Seventh Framework Programme (FP/2007-2013) / ERC Grant Agreement n.321186 - ReaDi -Reaction-Diffusion Equations, Propagation and Modelling. I also would like to thank Professors H. Berestycki and J.M. Roquejoffre for their support and their help during the preparation of this work.

\bibliographystyle{amsplain}
\bibliography{refs}

\end{document}